\documentclass[12pt]{amsart}

\usepackage{framed}
\usepackage{braket}
\usepackage{amsmath,amssymb,amsthm}
\usepackage{color}
\usepackage{bm}
\usepackage[all]{xy}
\usepackage{amscd}
\usepackage{graphicx}
\usepackage{ascmac}
\usepackage{BOONDOX-frak, mathrsfs}
\usepackage[top=30truemm,bottom=30truemm,left=25truemm,right=25truemm]{geometry}
\usepackage{mathtools}
\mathtoolsset{showonlyrefs=true}
\usepackage{tikz}
\usetikzlibrary{cd}

\makeatletter
\@addtoreset{equation}{section}

\makeatother

\newcommand{\Z}{\mathbb{Z}}
\newcommand{\N}{\mathbb{N}}
\newcommand{\Q}{\mathbb{Q}}

\newcommand{\F}{\mathbb{F}}

\newcommand{\fF}{\mathfrak{F}}
\newcommand{\fG}{\mathfrak{G}}

\newcommand{\fp}{\mathfrak{p}}

\newcommand{\cB}{\mathcal{B}}

\newcommand{\cF}{\mathcal{F}}
\newcommand{\cG}{\mathcal{G}}

\newcommand{\cI}{\mathcal{I}}

\newcommand{\cY}{\mathcal{Y}}

\newcommand{\ol}[1]{\overline{#1}}

\newcommand{\lrang}[1]{\langle #1 \rangle}

\DeclareMathOperator{\Gal}{Gal}
\DeclareMathOperator{\Coker}{Cok}

\DeclareMathOperator{\Ker}{Ker}

\DeclareMathOperator{\fin}{fin}

\DeclareMathOperator{\ab}{ab}

\DeclareMathOperator{\id}{id}
\DeclareMathOperator{\Cl}{Cl}
\DeclareMathOperator{\ord}{ord}
\DeclareMathOperator{\Hom}{Hom}
\DeclareMathOperator{\Ext}{Ext}

\DeclareMathOperator{\rank}{rank}

\DeclareMathOperator{\cha}{char}

\usepackage[OT2,T1]{fontenc}
\DeclareSymbolFont{cyrletters}{OT2}{wncyr}{m}{n}
\DeclareMathSymbol{\Sha}{\mathalpha}{cyrletters}{"58}

\let\oldenumerate\enumerate
\renewcommand{\enumerate}{
   \oldenumerate
   \setlength{\itemsep}{1pt}
   \setlength{\parskip}{0pt}
   \setlength{\parsep}{0pt}
}
\let\olditemize\itemize
\renewcommand{\itemize}{
   \olditemize
   \setlength{\itemsep}{1pt}
   \setlength{\parskip}{0pt}
   \setlength{\parsep}{0pt}
}

\theoremstyle{plain}
\newtheorem{thm}{Theorem}[section]
\newtheorem{lem}[thm]{Lemma}

\newtheorem{prop}[thm]{Proposition}
\newtheorem{cor}[thm]{Corollary}

\theoremstyle{definition}
\newtheorem{defn}[thm]{Definition}

\newtheorem{rem}[thm]{Remark}
\newtheorem{eg}[thm]{Example}

\newcommand{\hatZ}{\hat{\Z}}
\newcommand{\hatDelta}{\widehat{\Delta}}
\newcommand{\dcoset}[3]{#1 \backslash #2 / #3}
\DeclareMathOperator{\tors}{tors}
\DeclareMathOperator{\length}{length}
\DeclareMathOperator{\height}{ht}
\DeclareMathOperator{\modu}{-mod}
\newcommand{\modC}[1]{(#1\modu)}

\makeatletter
\@namedef{subjclassname@2020}{\textup{2020} Mathematics Subject Classification}
\makeatother

\title
[Ideal class groups]
{Construction of Galois modules describing ideal class groups and applications to Iwasawa theory}
\author[T.~Kataoka]{Takenori Kataoka}
\address{Department of Mathematics, Faculty of Science Division II, Tokyo University of Science.
1-3 Kagurazaka, Shinjuku-ku, Tokyo 162-8601, Japan}
\email{tkataoka@rs.tus.ac.jp}
\keywords{Ideal class groups, Galois modules, Iwasawa theory}
\subjclass[2020]{11R29, 11R23}
\date{\today}

\begin{document}

\maketitle

\begin{abstract}
For a finite Galois extension of number fields, the ideal class groups of intermediate fields do not behave well under taking Galois coinvariants in general.
In this paper, we construct a Galois module whose Galois coinvariants explicitly describe the ideal class groups of the intermediate fields.
As an application, we demonstrate how this construction naturally yields some results in classical Iwasawa theory.
\end{abstract}

\section{Introduction}\label{sec:intro}

Let $L/K$ be a finite Galois extension of number fields with Galois group $G = \Gal(L/K)$.
The ideal class group $\Cl(L)$ of $L$ is naturally equipped with an action of $G$.
The norm map $\Cl(L) \to \Cl(K)$ between the ideal class groups then induces a map
\[
\Cl(L)_G \to \Cl(K),
\]
where $\Cl(L)_G$ denotes the module of $G$-coinvariants of $\Cl(L)$.
This map can be used to study the structure of $\Cl(K)$ via $\Cl(L)_G$.
However, the difficulty is that this map is neither injective nor surjective in general.

In this paper, we construct a module equipped with an action of $G$, whose coinvariants determine the ideal class groups of the intermediate fields of $L/K$.
When $L/K$ is unramified, such a construction is classically known, and it indeed plays a role in studying the ideal class groups.
The main contribution of this paper is to deal with the case where $L/K$ is ramified.

We hope that this construction will have various applications to the study of ideal class groups.
In the present paper, we derive two results in classical Iwasawa theory.
The first is Iwasawa's asymptotic class number formula for $\Z_p$-extensions; indeed, this application was the starting point of the present work.
The second is a duality relation between the Iwasawa module, defined as the inverse limit of the ideal class groups, and their direct limit along $\Z_p^d$-extensions.

\subsection{Unramified case}\label{ss:intro_unr}

In Section \ref{sec:unr}, we consider the case where $L/K$ is unramified.
The following theorem is essentially due to Gruenberg \cite{Gru76} (see also Ritter--Weiss \cite{RW96}, \cite{RW02}), but our exposition differs somewhat from it.
The proof is given in Subsection \ref{ss:unr_pf}.

\begin{thm}\label{thm:main_unr}
Let $K$ be a number field.
For each finite unramified Galois extension $L/K$, setting $G = \Gal(L/K)$, we can construct a left $\Z[G]$-module $B(L) = B(L/K)$ satisfying the following:
For any intermediate field $K'$ of $L/K$ with $G' = \Gal(L/K')$, we have an exact sequence
\[
0 \to \Cl(K') \to B(L)_{G'} \to \Z[G' \backslash G] \to \Z \to 0.
\]
\end{thm}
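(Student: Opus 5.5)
The plan is to realize $B(L)$ as a relative augmentation module built from the Hilbert class field, in the spirit of Gruenberg, and then to read off the exact sequence from the long exact Tor sequence of group homology.

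\textbf{Construction.} Let $H$ be the Hilbert class field of $L$. Since $L/K$ is unramified and Galois, $H$ is stable under every embedding over $K$, so $H/K$ is Galois. Put $\fG = \Gal(H/K)$. Artin reciprocity then gives an extension
\[
1 \to \Cl(L) \to \fG \to G \to 1 .
\]
Let $I_\fG \subset \Z[\fG]$ be the augmentation ideal, and set $A = \Gal(H/L) \simeq \Cl(L)$. Define
\[
B(L) = \Z \otimes_{\Z[A]} I_\fG = I_\fG / I_A I_\fG .
\]
The left action of $\fG$ on this quotient factors through $G$, so $B(L)$ is a left $\Z[G]$-module. (Up to the left/right convention, this is Gruenberg's relation-type module.)

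\textbf{Coinvariants.} Let $K'$ be an intermediate field with $G' = \Gal(L/K')$, and let $\fG' = \Gal(H/K')$ be the preimage of $G'$ in $\fG$. By associativity of tensor products,
\[
B(L)_{G'} = \Z \otimes_{\Z[G']} \bigl(\Z \otimes_{\Z[A]} I_\fG\bigr) = \Z \otimes_{\Z[\fG']} I_\fG .
\]
Now apply $\Z \otimes_{\Z[\fG']} -$ to $0 \to I_\fG \to \Z[\fG] \to \Z \to 0$. Because $\Z[\fG]$ is free over $\Z[\fG']$, we have $\mathrm{Tor}_1^{\Z[\fG']}(\Z, \Z[\fG]) = 0$. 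Also $\mathrm{Tor}_1^{\Z[\fG']}(\Z,\Z) = H_1(\fG',\Z) = (\fG')^{\ab}$, and $\Z \otimes_{\Z[\fG']} \Z[\fG] = \Z[\fG' \backslash \fG] = \Z[G' \backslash G]$. The long exact sequence therefore gives
\[
0 \to (\fG')^{\ab} \to B(L)_{G'} \to \Z[G' \backslash G] \to \Z \to 0 .
\]

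\textbf{Identifying $(\fG')^{\ab}$ with $\Cl(K')$.} The fixed field of the commutator subgroup of $\fG'$ is the maximal abelian subextension $M$ of $H/K'$. This extension is unramified, since $H/K$ is unramified, so $M$ is contained in the Hilbert class field $H(K')$ of $K'$.

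Conversely, $H(K')L/L$ is unramified and abelian, so $H(K') \subset H$, and hence $H(K') \subset M$. Thus $M = H(K')$, and Artin reciprocity gives $(\fG')^{\ab} \simeq \Gal(H(K')/K') \simeq \Cl(K')$, which yields the claimed exact sequence.

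\textbf{Expected obstacle.} I expect the main point of care to be this last identification, together with making the maps natural: with explicit transfer and Artin maps, the map $\Cl(K') \to B(L)_{G'}$ should send the class of $\fp$ to the image of $\sigma - 1$, where $\sigma$ is a Frobenius for $\fp$. It will also be necessary to check the left/right conventions, so that $B(L)$ really is a left $\Z[G]$-module and the $G'$-coinvariants coincide with $\Z \otimes_{\Z[\fG']} I_\fG$.
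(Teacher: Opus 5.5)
Your proposal is correct and is essentially the paper's proof with the minimal auxiliary field $F = H(L)$. The paper also applies $\Z \otimes_{\Z[\cG']}(-)$ to $0 \to \Delta\cG \to \Z[\cG] \to \Z \to 0$, kills the $H_1$ term using freeness of $\Z[\cG]$ over $\Z[\cG']$, and identifies $\cG'_{\ab} \simeq \Gal(H(K')/K') \simeq \Cl(K')$ by the same unramifiedness/containment argument; the only difference is that it allows any larger unramified $F \supset H(L)$ Galois over $K$ and checks that $B(L)$ is independent of that choice.
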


Here and henceforth, $\Z[G' \backslash G] \to \Z$ and similar maps always denote the augmentation map.

In particular, by letting $K' = L$ and $K' = K$ respectively, we have an exact sequence
\begin{equation}\label{eq:fund_unr}
0 \to \Cl(L) \to B(L) \to \Z[G] \to \Z \to 0
\end{equation}
and an isomorphism
\begin{equation}\label{eq:fund_unr2}
\Cl(K) \simeq B(L)_G.
\end{equation}

We have $\Cl(K') \simeq (B(L)_{G'})_{\fin}$, where $(-)_{\fin}$ denotes the $\Z$-torsion submodule.
This shows that the single $\Z[G]$-module $B(L)$ determines $\Cl(K')$ for all intermediate fields $K'$ of $L/K$.
Note that the module $B(L)$ depends not only on $L$ but also on $K$; indeed, by \eqref{eq:fund_unr}, we have $\rank_{\Z} B(L) = [L: K] - 1$.

In Subsection \ref{ss:unr_app}, we see how this module $B(L)$ applies to the study of the ideal class groups.
In Subsection \ref{ss:unr_inf}, we show a variant of Theorem \ref{thm:main_unr} for the case where $L/K$ is an infinite extension.

\subsection{Ramified case}\label{ss:intro_ram}

In Section \ref{sec:ram}, we study the case where $L/K$ is possibly ramified.
The following is the main theorem and its proof is given in Subsection \ref{ss:ram_pf}.

\begin{thm}\label{thm:main_ram}
Let $K$ be a number field and $S$ a nonempty finite set of (finite or infinite) places of $K$.
For each finite Galois extension $L/K$ that is unramified outside $S$, setting $G = \Gal(L/K)$, we can construct a left $\Z[G]$-module $B^S(L) = B^S(L/K)$ satisfying the following:
For any intermediate field $K'$ of $L/K$ with $G' = \Gal(L/K')$, we have an exact sequence
\[
0 \to \Cl(K') \to B^S(L)_{G'} \to \bigoplus_{v \in S} \Z[\dcoset{G'}{G}{I_v}] \to \Z \to 0,
\]
where $I_v \subset G$ denotes the inertia subgroup of (a place of $L$ lying above) $v$.
\end{thm}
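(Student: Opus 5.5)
The plan is to imitate the unramified construction of Theorem \ref{thm:main_unr}, but to correct for ramification by a relative (cone-type) module built from the inertia groups. Let $H_L$ be the Hilbert class field of $L$ (in the same sense as $\Cl$, i.e.\ unramified at all places, including the infinite ones). Then $H_L/K$ is Galois, and we set $\mathcal{H} = \Gal(H_L/K)$. This gives an extension
\[
1 \to \Cl(L) \to \mathcal{H} \to G \to 1 .
\]
For each $v \in S$, fix a place of $H_L$ above $v$, and let $\tilde I_v \subset \mathcal{H}$ be its inertia group. Since $H_L/L$ is unramified, $\tilde I_v \cap \Cl(L) = 1$, so $\tilde I_v$ maps isomorphically onto $I_v \subset G$. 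Put $P = \bigoplus_{v \in S} \Z[\mathcal{H}/\tilde I_v]$. Because $S \neq \emptyset$, the augmentation $P \to \Z$ is surjective, and we let $J$ be its kernel. We then define
\[
B^S(L) = \Z[G] \otimes_{\Z[\mathcal{H}]} J = J_{\Cl(L)} .
\]
This is a left $\Z[G]$-module. Moreover, for $G' = \Gal(L/K')$ with preimage $\mathcal{H}' = \Gal(H_L/K') \subset \mathcal{H}$, we get $B^S(L)_{G'} = J_{\mathcal{H}'}$.

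Next we take $\mathcal{H}'$-homology of $0 \to J \to P \to \Z \to 0$. By Shapiro's lemma and the Mackey decomposition,
\[
H_1(\mathcal{H}', P) = \bigoplus_{v\in S} \bigoplus_{g \in \mathcal{H}'\backslash \mathcal{H}/\tilde I_v} (\mathcal{H}' \cap g\tilde I_v g^{-1})^{\ab}
\quad\text{and}\quad
P_{\mathcal{H}'} = \bigoplus_{v} \Z[\mathcal{H}'\backslash\mathcal{H}/\tilde I_v] .
\]
The latter equals $\bigoplus_v \Z[\dcoset{G'}{G}{I_v}]$, because $\Cl(L) \subset \mathcal{H}'$. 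The long exact sequence therefore reads
\[
\bigoplus_{v,g} (\mathcal{H}'\cap g\tilde I_v g^{-1})^{\ab} \to (\mathcal{H}')^{\ab} \to J_{\mathcal{H}'} \to \bigoplus_{v\in S}\Z[\dcoset{G'}{G}{I_v}] \to \Z \to 0,
\]
where $H_1(\mathcal{H}',\Z) = (\mathcal{H}')^{\ab}$ and the first map is induced by the inclusions. So it remains to identify the cokernel of the first map with $\Cl(K')$.

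This identification is the main point of the proof. The groups $\mathcal{H}' \cap g\tilde I_v g^{-1}$, as $g$ ranges over the double cosets, are exactly the inertia groups in $\Gal(H_L/K')$ of the places of $H_L$ above the places of $K'$ lying over $v$. Since $L/K$ is unramified outside $S$ and $H_L/L$ is unramified, these exhaust all inertia in $H_L/K'$. Hence the cokernel is the Galois group over $K'$ of the maximal abelian subextension of $H_L/K'$ that is unramified at every place. I would check the real places carefully here, so as to get the wide rather than the narrow class field; this is consistent because $H_L$ itself is unramified at infinity. Finally, this maximal subextension is the Hilbert class field $H_{K'}$ of $K'$: indeed $LH_{K'}/L$ is unramified abelian, so $H_{K'} \subset H_L$. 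Class field theory then gives the identification with $\Cl(K')$.

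The remaining points are routine. One must check that the connecting maps agree with the natural ones (e.g.\ that $\Cl(K') \to B^S(L)_{G'}$ is compatible with the identification above). One must also note that the construction depends on the choices of places only up to isomorphism, since conjugating $\tilde I_v$ gives an isomorphic $P$.
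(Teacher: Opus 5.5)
Your proposal is correct and follows the paper's proof essentially verbatim, specialized to the minimal auxiliary field $F = H(L)$: your $J$ is the paper's $\cB^{S,F}(L)$, your Shapiro/Mackey step is exactly Lemmas \ref{lem:dcoset}--\ref{lem:dcoset2}, and the cokernel is then identified with $\Gal(H(K')/K') \simeq \Cl(K')$ in the same way. The one imprecision is that distinct double cosets in $\mathcal{H}'\backslash\mathcal{H}/\tilde I_v$ can lie over the same place of $K'$ (the paper's ``possibly with repetitions''), so the correspondence with inertia groups is not ``exactly'' one-to-one; this is harmless, since $\mathcal{H}'$-conjugate inertia groups have the same image in $(\mathcal{H}')^{\ab}$.
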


In particular, by letting $K' = L$ and $K' = K$ respectively, we have exact sequences
\begin{equation}\label{eq:fund_ram}
0 \to \Cl(L) \to B^S(L) \to \bigoplus_{v \in S} \Z[G/I_v] \to \Z \to 0
\end{equation}
and
\begin{equation}\label{eq:fund_ram2}
0 \to \Cl(K) \to B^S(L)_{G} \to \bigoplus_{v \in S} \Z \to \Z \to 0.
\end{equation}

The single $\Z[G]$-module $B^S(L)$ again determines $\Cl(K')$ by $\Cl(K') \simeq (B^S(L)_{G'})_{\fin}$.
Note that the set $S$ is assumed to be nonempty, so Theorem \ref{thm:main_unr} is not literally covered by Theorem \ref{thm:main_ram}.
(However, we can formally recover Theorem \ref{thm:main_unr} from Theorem \ref{thm:main_ram} by setting $S$ to be an arbitrary singleton.)

In Subsection \ref{ss:ram_app}, we briefly discuss its application to the study of the ideal class groups.
Then in Subsection \ref{ss:ram_inf}, we obtain a variant for infinite extensions $L/K$.
Because it is the key to the applications in Sections \ref{sec:app_Iw} and \ref{sec:duality}, we now state the result in the Iwasawa-theoretic situation.

In what follows, we fix a prime number $p$ and a number field $K$.
Let $K_{\infty}/K$ be a $\Z_p^d$-extension with some $d \geq 1$ and set $\Gamma = \Gal(K_{\infty}/K)$.
For each place $v$ of $K$, let $I_v \subset D_v \subset \Gamma$ be the inertia and decomposition groups.
We define the Iwasawa algebra by $\Z_p[[\Gamma]] = \varprojlim_{\Gamma'} \Z_p[\Gamma/\Gamma']$, where $\Gamma'$ runs over the open subgroups of $\Gamma$.
We define the unramified Iwasawa module by
\[
X_{K_{\infty}} = \varprojlim_{K'} A_{K'},
\]
where $K'$ runs over the intermediate number fields of $K_{\infty}/K$, we set $A_{K'} = \Z_p \otimes_{\Z} \Cl(K')$, and the transition maps are (the $p$-parts of) the norm maps.
Then $X_{K_{\infty}}$ is naturally equipped with a profinite $\Z_p[[\Gamma]]$-module structure.
It is well-known that $X_{K_{\infty}}$ is finitely generated and torsion over $\Z_p[[\Gamma]]$.
This property will be reproved in this paper (see Theorem \ref{thm:intro_duality}), but we accept this for now.

\begin{thm}\label{thm:main_ram_Iw}
Let $K_{\infty}/K$ be a $\Z_p^d$-extension with $\Gamma = \Gal(K_{\infty}/K)$.
Let $S$ be the set of (necessarily $p$-adic) primes of $K$ that are ramified in $K_{\infty}/K$.
Then we can construct a finitely generated torsion $\Z_p[[\Gamma]]$-module $Y_{K_{\infty}}$ satisfying exact sequences
\begin{equation}\label{eq:Iw_inf}
0 \to X_{K_{\infty}} \to Y_{K_{\infty}} \to \bigoplus_{v \in S} \Z_p[[\Gamma/I_v]] \to \Z_p \to 0
\end{equation}
and
\begin{equation}\label{eq:Iw_fin}
0 \to A_{K'} \to (Y_{K_{\infty}})_{\Gamma'} \to \bigoplus_{v \in S} \Z_p[\Gamma/\Gamma' I_v] \to \Z_p \to 0
\end{equation}
for any intermediate number field $K'$ of $K_{\infty}/K$ with $\Gamma' = \Gal(K_{\infty}/K')$.
\end{thm}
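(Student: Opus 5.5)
The plan is to obtain $Y_{K_\infty}$ as an inverse limit of the finite-level modules of Theorem \ref{thm:main_ram}, tensored with $\Z_p$. For each intermediate number field $K_n$ of $K_\infty/K$ (running over a cofinal tower, say $K_n$ fixed by $\Gamma^{p^n}$), $K_n/K$ is Galois and unramified outside $S$. Here $S$ is nonempty because some prime must ramify in a $\Z_p^d$-extension: $K$ has no nontrivial unramified $\Z_p$-extension, since $\Cl(K)$ is finite. So Theorem \ref{thm:main_ram} gives $B^S(K_n)$. Set $Y_n = \Z_p \otimes_\Z B^S(K_n)$; tensoring with $\Z_p$ is exact, so the exact sequences of Theorem \ref{thm:main_ram} carry over with $A$ in place of $\Cl$.

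The key step is to produce transition maps $Y_m \to Y_n$ for $m \geq n$ inducing isomorphisms $(Y_m)_{\Gal(K_m/K_n)} \simeq Y_n$. These maps should be compatible with the norm on $A$ and with the natural projections $\Z_p[\Gamma_m/I_v] \to \Z_p[\Gamma_n/I_v]$. This requires looking inside the construction of $B^S$ in Subsection \ref{ss:ram_pf}. Presumably $B^S(L)$ is defined functorially, for instance via a Tate-sequence-style extension class or via $S$-ideals and $S$-units. I would then check that the construction for $K_m$, after taking coinvariants under $\Gal(K_m/K_n)$, recovers the construction for $K_n$, or at least maps to it compatibly. Comparing the sequence for $K_m$ at $K' = K_n$ with the sequence for $K_n$ at $K' = K_n$ gives two extensions of the same shape, $A_{K_n} \to ? \to \bigoplus_v \Z_p[\Gamma_n/I_v] \to \Z_p$. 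A five-lemma argument shows that a compatible map between them is an isomorphism. I expect this compatibility to be the main obstacle, because it depends on the naturality of the construction rather than on the abstract statement. If the construction is only canonical up to noncanonical choices, I would fall back on a compactness argument instead. The $Y_n$ are finitely generated over $\Z_p$, so the sets of admissible identifications are finite or profinite, and a nonempty inverse limit of them exists.

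Given compatible $Y_n$, define $Y_{K_\infty} = \varprojlim_n Y_n$. All terms are finitely generated $\Z_p$-modules, hence compact, so inverse limits are exact. Passing the exact sequences at $K' = K_n$ to the limit gives \eqref{eq:Iw_inf}, using $\varprojlim_n \Z_p[\Gamma_n/I_v] = \Z_p[[\Gamma/I_v]]$.

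For \eqref{eq:Iw_fin}, take $\Gamma' = \Gal(K_\infty/K')$. The coinvariant isomorphisms give $(Y_{K_\infty})_{\Gamma'} = \varprojlim_m (Y_m)_{\Gal(K_m/K')}$. This uses compactness to commute coinvariants with the limit, and the transition maps $(Y_m)_{\Gal(K_m/K')} \to (Y_n)_{\Gal(K_n/K')}$ become isomorphisms for $m \geq n$. The limit therefore stabilizes, and the resulting module sits in the sequence of Theorem \ref{thm:main_ram} for $K_n$ and $K'$.

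It remains to show that $Y_{K_\infty}$ is finitely generated and torsion over $\Z_p[[\Gamma]]$. Finite generation follows from topological Nakayama: $(Y_{K_\infty})_\Gamma$ is a finitely generated $\Z_p$-module by \eqref{eq:Iw_fin} with $K' = K$. For torsion, \eqref{eq:Iw_inf} gives the rank as $\rank X_{K_\infty} + \sum_v \rank \Z_p[[\Gamma/I_v]] - \rank \Z_p$. The first term vanishes because $X_{K_\infty}$ is torsion, as the paper allows us to accept for now. Each $\Z_p[[\Gamma/I_v]]$ is torsion since $I_v \neq 1$ for $v \in S$, and $\Z_p$ is torsion since $d \geq 1$. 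Hence the rank is $0$.
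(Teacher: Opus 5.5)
Your outer framework is sound: tensoring with $\Z_p$, exactness of $\varprojlim$ on compact modules, commuting $\Gamma'$-coinvariants with the limit, topological Nakayama, and the rank count using $I_v \neq 1$ together with the accepted torsionness of $X_{K_\infty}$. The last two steps are exactly the paper's.

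There is, however, a genuine gap at the step you yourself flag and never carry out. You need maps $Y_m \to Y_n$ inducing $(Y_m)_{\Gal(K_m/K_n)} \simeq Y_n$, compatibly with the norm on $A$ and with the projections $\Z_p[\Gal(K_m/K)/I_v] \to \Z_p[\Gal(K_n/K)/I_v]$. Your fallback does not supply these. Used as a black box, Theorem \ref{thm:main_ram} only says that $B^S(K_n)$ and $B^S(K_m)_{\Gal(K_m/K_n)}$ are both extensions of $\Ker(\bigoplus_{v \in S}\Z[\Gal(K_n/K)/I_v]\to\Z)$ by $\Cl(K_n)$. It says nothing about their extension classes, so your sets of ``admissible identifications'' may be empty. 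A compactness argument over possibly empty sets yields nothing, and the five lemma only upgrades an \emph{existing} compatible map to an isomorphism. Even the fact that the induced map on class groups is the norm is not part of the abstract statement; it is Lemma \ref{lem:funct_ram}(1), which again rests on the construction.

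The missing observation is that, in Definition \ref{defn:B_S}, the module $\Ker(\bigoplus_{v\in S}\Z[\cG/\cI_v]\to\Z)$ is built from $\cG=\Gal(F/K)$ and the $\cI_v$ alone, with no dependence on $L$. The field $F=H(K_m)$ is Galois over $K$, unramified outside $S$ over $K_n$, and contains $H(K_n)$. So it is an admissible auxiliary field for both $K_m$ and $K_n$, giving $\cB^{S,F}(K_m)=\cB^{S,F}(K_n)$. Hence
$B^S(K_m)_{\Gal(K_m/K_n)} = \cB^{S,F}(K_m)_{\Gal(F/K_n)} = B^S(K_n)$ literally, with the sequences matching by Lemma \ref{lem:funct_ram}(1). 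To make this coherent along the whole tower, you choose the places $u_v$ compatibly and use the comparison maps of Remark \ref{rem:B_unr_indep}.

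The paper avoids the inverse limit altogether. It takes one profinite auxiliary field ($F=H(K_\infty)$ or $K_S$) and sets $\cY^{S,F}=\Ker(\bigoplus_{v\in S}\hatZ[[\cG/\cI_v]]\to\hatZ)$ and $Y_{K_\infty}=\Z_p\otimes_{\hatZ}(\cY^{S,F})_{\Gal(F/K_\infty)}$. It then proves \eqref{eq:Iw_fin} for each finite $K'$ by the same $H_1(\cG',-)$ and double-coset computation as in Theorem \ref{thm:main_ram}, the double coset sets being finite. Finally, \eqref{eq:Iw_inf} follows by passing to the limit. Once this input is supplied, the rest of your argument goes through.
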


The natural map $(X_{K_{\infty}})_{\Gamma'} \to A_{K'}$ is not necessarily injective or surjective; in fact, $(X_{K_{\infty}})_{\Gamma'}$ is often even infinite.
Our theorem gives a precise relation between the Iwasawa module $X_{K_{\infty}}$ and the ideal class group $A_{K'}$ via the module $Y_{K_{\infty}}$.

\subsection{Variants}\label{ss:int_var}

In Section \ref{sec:variants}, we generalize the theorems above to some variants of the ideal class group, such as the ray class groups.
As an example, we consider the split Iwasawa module
\[
X'_{K_{\infty}} = \varprojlim_{K'} A'_{K'},
\]
where $A'_{K'} = \Z_p \otimes_{\Z} \Cl'(K')$ and $\Cl'(K')$ denotes the quotient of $\Cl(K')$ by the subgroup generated by the classes of the prime ideals containing $p$.
Then we have the following analogue of Theorem \ref{thm:main_ram_Iw}.

\begin{thm}\label{thm:main_ram_Iw_sp}
Let $K_{\infty}/K$ be a $\Z_p^d$-extension with $\Gamma = \Gal(K_{\infty}/K)$.
Let $S$ be the set of $p$-adic primes of $K$ that are not totally split in $K_{\infty}/K$.
Then we can construct a finitely generated torsion $\Z_p[[\Gamma]]$-module $Y'_{K_{\infty}}$ satisfying exact sequences
\begin{equation}
0 \to X'_{K_{\infty}} \to Y'_{K_{\infty}} \to \bigoplus_{v \in S} \Z_p[[\Gamma/D_v]] \to \Z_p \to 0
\end{equation}
and
\begin{equation}
0 \to A'_{K'} \to (Y'_{K_{\infty}})_{\Gamma'} \to \bigoplus_{v \in S} \Z_p[\Gamma/\Gamma' D_v] \to \Z_p \to 0
\end{equation}
for any intermediate number field $K'$ of $K_{\infty}/K$ with $\Gamma' = \Gal(K_{\infty}/K')$.
\end{thm}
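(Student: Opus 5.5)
We would deduce Theorem \ref{thm:main_ram_Iw_sp} by the same construction that gives Theorem \ref{thm:main_ram_Iw}, replacing ideal class groups with their $S$-split quotients. The key point is that $\Cl'(K')$ is the quotient of $\Cl(K')$ by the classes of $p$-adic primes. Killing these classes replaces inertia groups by decomposition groups, because the primes of $K'$ above $v$ are indexed by $\Gamma/\Gamma' D_v$ rather than by the inertia-double cosets.

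First, at finite level, we construct for a finite Galois extension $L/K$ unramified outside $S$ (enlarging $S$ to contain all $p$-adic primes, or at least those ramified) a $\Z[G]$-module $B'^{S}(L)$ with exact sequences
\[
0 \to \Cl'(K') \to B'^S(L)_{G'} \to \bigoplus_{v \in S} \Z[\dcoset{G'}{G}{D_v}] \to \Z \to 0.
\]
We obtain it as a pushout of $B^S(L)$ from Theorem \ref{thm:main_ram}. The module $B^S(L)$ maps onto $\bigoplus_v \Z[G/I_v]$, and $\Z[G/I_v] \to \Z[G/D_v]$ has kernel generated by elements $(\sigma - 1)\tau I_v$ with $\sigma \in D_v$. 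Lifting these kernel elements to $B^S(L)$, we expect them to correspond (via the construction of $B^S$, which should come from $S$-idèles or ideals supported on $S$) to the classes of primes of $L$ above $v$ modulo Frobenius twists. We then quotient $B^S(L)$ by the $\Z[G]$-submodule generated by the classes of primes above $S$ in the appropriate realization.

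The key verification is that after taking $G'$-coinvariants the image of this submodule is exactly the subgroup of $\Cl(K')$ generated by primes above $S$. This uses the fact that each prime $w'$ of $K'$ above $v$ corresponds to a double coset $G'\tau D_v$. Primes not in $S$ but above $p$ are totally split in $K_\infty$, so they contribute no Galois-module complication. Their classes can be killed directly at each level, compatibly with norms, because their stabilizer in $\Gamma$ is trivial and the prime-class submodule they generate is induced, hence coinvariant-exact.

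Second, we pass to the limit over $K_\infty$ exactly as in Subsection \ref{ss:ram_inf}. We take $p$-completions $\Z_p \otimes B'^S(K_n)$ and inverse limits; the finite-level sequences are compatible, and Mittag-Leffler holds by finiteness, giving the infinite sequence with $\Z_p[[\Gamma/D_v]]$. The sequence for $(Y'_{K_\infty})_{\Gamma'}$ follows by comparing coinvariants of the limit with the finite level via the same argument used for \eqref{eq:Iw_fin}. Torsionness follows since $X'_{K_\infty}$ is torsion (a quotient of $X_{K_\infty}$) and each $\Z_p[[\Gamma/D_v]]$ is torsion for $v \in S$, because $D_v \neq 1$.

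The main obstacle is the finite-level step: showing that the kernel of the coinvariant sequence is exactly $\Cl'(K')$, i.e.\ that the subgroup generated by prime classes behaves well under coinvariants. We would handle this with a snake-lemma diagram comparing the sequence of Theorem \ref{thm:main_ram} with $\bigoplus_v \Z[G/I_v] \to \bigoplus_v \Z[G/D_v]$.
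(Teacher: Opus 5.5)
Your overall shape is right: inertia groups become decomposition groups, and torsionness follows from $D_v \neq 1$ for $v \in S$. Your separate treatment of the $p$-adic primes outside $S$ is also sound. Their classes form norm-compatible systems because all residue degrees are $1$, and since coinvariants are right exact, dividing out the $\Lambda$-submodule they generate inside $X$ kills exactly their images in every $A_{K'}$.

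The genuine gap is the central step for $v \in S$. The kernel of $\Z[G/I_v] \to \Z[G/D_v]$ lifts to $B^S(L)$ only modulo $\Cl(L)$, and the result of your quotient depends on which lift you take. Already for $K' = K$ this matters. A lift $e_v$ of $(\phi_v - 1)I_v$ maps to $0$ in $\bigoplus_{v \in S}\Z$, so its image in $B^S(L)_G$ lies in $\Cl(K)$ and gets killed. Replacing $e_v$ by $e_v + x$ with $x \in \Cl(L)$ changes that image by $N_{L/K}(x)$, so a bad choice kills classes outside the subgroup generated by the $p$-adic primes.

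You therefore need a specific, level-compatible ``Frobenius'' lift, and your proposal has no mechanism to produce one. $B^S(L)$ is not built from $S$-idèles or $S$-ideals but from $\cG = \Gal(F/K)$, and ``the appropriate realization'' is never defined. A snake lemma on the sequence for $L$ cannot settle this either. What must be shown is that $\mathrm{im}(N \to B^S(L)_{G'}) \cap \Cl(K')$ is exactly the subgroup generated by classes of $p$-adic primes, for every $K'$. This is invisible at the top level, since coinvariants are not left exact. (Minor: the modules $\Z_p \otimes B'^S(K_n)$ are not finite; exactness of the limit comes from compactness.)

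The missing idea is to redo the construction rather than quotient abstractly. The paper applies Theorem \ref{thm:ray_cl_ram} with $\Sigma = \emptyset$ and $\Sigma'$ the set of $p$-adic primes:
\begin{itemize}
\item take $F$ to be the maximal unramified abelian extension of $L$ in which all places above $\Sigma'$ split completely; it is Galois over $K$;
\item let $\mathcal{D}_v \subset \cG$ be the decomposition groups, and take the $\Gal(F/L)$-coinvariants of $\Ker(\bigoplus_{v \in S}\Z[\cG/\mathcal{D}_v] \to \Z)$.
\end{itemize}
The augmentation is surjective since some $p$-adic prime ramifies, so $S \neq \emptyset$.

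The $H_1(\cG', -)$ computation via Lemmas \ref{lem:dcoset} and \ref{lem:dcoset2} then identifies the kernel at level $K'$ with $\cG'_{\ab}$ modulo the decomposition groups of the places of $K'$ above $S$. Since $F/K$ is unramified outside $S$ and split at the $p$-adic primes outside $S$, this quotient is $\Gal(H^{\Sigma'}(K')/K') \simeq \Cl'(K')$.

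If you prefer to keep $F \supset H(L)$ and view your module as a quotient of $B^S(L)$, the correct lifts are the images of $(\tilde\phi_v - 1)\cI_v$ with $\tilde\phi_v \in \mathcal{D}_v$. Equivalently, you quotient by the kernel of $\cB^{S,F}(L) \to \Ker(\bigoplus_{v\in S}\Z[\cG/\mathcal{D}_v]\to\Z)$. This gives the $S$-split quotient, and your killing of the totally split primes then finishes the argument. But the verification is again this homology computation, not a snake lemma.
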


Using this theorem instead of Theorem \ref{thm:main_ram_Iw}, the results in Sections \ref{sec:app_Iw} and \ref{sec:duality} described below are also valid for $\Cl'(K')$ and $X'_{K_{\infty}}$ instead of $\Cl(K')$ and $X_{K_{\infty}}$.

\subsection{Iwasawa's asymptotic class number formula}\label{ss:intro_ICNF}

In Section \ref{sec:app_Iw}, we apply Theorem \ref{thm:main_ram_Iw} to recover Iwasawa's asymptotic class number formula \cite[Theorem 11]{Iwa59} (see also Washington \cite[Theorem 13.13]{Was97}).
The statement is the following.

\begin{thm}\label{thm:intro_ICNF}
Let $K_{\infty}/K$ be a $\Z_p$-extension and let $K_n$ denote its $n$-th layer for each $n \geq 0$.
Then there exist integers $\lambda \geq 0$, $\mu \geq 0$, and $\nu$ such that
\[
\ord_p(\# \Cl(K_n))
= \lambda n + \mu p^n + \nu
\]
holds for $n \gg 0$.
Here, $\ord_p(-)$ denotes the $p$-adic valuation normalized by $\ord_p(p) = 1$.
\end{thm}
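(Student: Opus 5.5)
The plan is to apply Theorem \ref{thm:main_ram_Iw} with $d=1$, $\Gamma = \Gal(K_\infty/K) \simeq \Z_p$, and $\Gamma' = \Gamma^{p^n}$, $K' = K_n$. Fix a topological generator $\gamma$ of $\Gamma$ and identify $\Lambda = \Z_p[[\Gamma]] \simeq \Z_p[[T]]$ via $\gamma \mapsto 1+T$. Put $\omega_n = (1+T)^{p^n}-1$. Then \eqref{eq:Iw_fin} becomes
\[
0 \to A_{K_n} \to Y/\omega_n Y \to \bigoplus_{v \in S} \Z_p[\Gamma/\Gamma^{p^n} I_v] \to \Z_p \to 0,
\]
where $Y = Y_{K_\infty}$ is a finitely generated torsion $\Lambda$-module. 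Each $I_v$ is a nontrivial closed subgroup of $\Gamma$ for $v \in S$, hence open. So for $n \gg 0$ we have $\Gamma^{p^n} \subset I_v$, and the right-hand terms stabilize to the fixed finitely generated $\Z_p$-module $\bigoplus_{v\in S}\Z_p[\Gamma/I_v]$ and its augmentation to $\Z_p$. The kernel $M$ of the augmentation is then a fixed free $\Z_p$-module, of rank $r = \sum_v [\Gamma:I_v] - 1$, independent of $n$.

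From the sequence, $Y/\omega_n Y$ has $\Z_p$-rank exactly $r$ for $n \gg 0$, and its torsion submodule is $A_{K_n}$. Indeed, $A_{K_n}$ is finite and the quotient by $A_{K_n}$ is $M$, which is torsion-free. Thus
\[
\ord_p(\#\Cl(K_n)) = \ord_p\bigl(\#(Y/\omega_n Y)_{\tors}\bigr),
\]
and the problem reduces to a purely algebraic statement: for a finitely generated torsion $\Lambda$-module $Y$ whose coinvariants $Y/\omega_nY$ have bounded $\Z_p$-rank, the quantity $\ord_p\#(Y/\omega_nY)_{\tors}$ equals $\lambda n + \mu p^n + \nu$ for $n \gg 0$. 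Here $\lambda$ and $\mu$ are the Iwasawa invariants of $Y$, after removing the contribution of the factors of the characteristic polynomial dividing some $\omega_m/\omega_{m-1}$.

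To prove the algebraic statement, I would use the structure theorem. There is a pseudo-isomorphism $Y \to E = \bigoplus_i \Lambda/(p^{m_i}) \oplus \bigoplus_j \Lambda/(f_j^{e_j})$ with $f_j$ distinguished irreducible, and finite kernel and cokernel. For the elementary module $E$ the computation is standard. Write $\nu_{n,m} = \omega_n/\omega_m$. Factors $\Lambda/(p^{m})$ give $\Z/p^m[T]/(\omega_n)$, of order $p^{mp^n}$. If $f_j$ is coprime to all $\omega_n$, then $\Lambda/(f_j^{e}, \omega_n)$ is finite. Its order is computed as in Washington by writing $\omega_n = \nu_{n,n_0}\omega_{n_0}$ and using that $\nu_{n,n_0}$ acts like $p^{n-n_0}$ times a unit modulo $f_j$ for large $n$. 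This gives contribution $e\deg(f_j)\, n + c$. If $f_j$ divides some $\omega_m/\omega_{m-1}$ (a cyclotomic factor), then $\Lambda/(f_j^e,\omega_n)$ stabilizes as $n$ grows, to $\Lambda/(f_j^{e'})$ with bounded torsion. Its $\Z_p$-rank is $\deg f_j$ times something, and the bounded rank of $Y/\omega_n Y$ is consistent with this. Its torsion part is eventually constant, contributing only to $\nu$.

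The main obstacle is passing from $E$ back to $Y$ through the pseudo-isomorphism. Usually one shows that $\#(Y/\omega_nY)$ and $\#(E/\omega_nE)$ differ by a bounded constant when the coinvariants are finite. Here they are infinite, so one must control the torsion parts. I would use the snake lemma for multiplication by $\omega_n$ on $0\to F_1 \to Y \to E \to F_2 \to 0$ with $F_1, F_2$ finite. For $n\gg0$, $\omega_n$ kills $F_1$ and $F_2$, since $\omega_n \in (p,T)^{n}$ tends to $0$. So the kernels and cokernels $F_i[\omega_n]=F_i$ and $F_i/\omega_n F_i = F_i$ are constant. Hence $Y/\omega_nY$ and $E/\omega_nE$ are related by maps with kernel and cokernel of bounded, eventually constant size.

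The difficulty is that the order of the torsion subgroup is not multiplicative in exact sequences when free parts are present. I would instead compare $\Z_p$-lattices. The maps induce a map on torsion-free quotients that is an isomorphism after $\otimes\Q_p$, and whose kernel and cokernel are eventually constant. That requires a compatibility argument: for $n \ge n_1$ the systems are linked by the stable maps $\nu_{n+1,n}$ on the free parts. Granting that, $\ord_p\#(Y/\omega_nY)_{\tors} - \ord_p\#(E/\omega_nE)_{\tors}$ is eventually constant, and the formula follows with $\lambda = \sum_{f_j \text{ noncyclotomic}} e_j\deg f_j$ and $\mu=\sum m_i$.
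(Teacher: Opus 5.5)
Your reduction to $\ord_p\#(Y/\omega_nY)_{\tors}$ via \eqref{eq:Iw_fin} is correct. The gap is in the cyclotomic summands of the elementary module: your computation there is false, and so is your value of $\lambda$.

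Let $f=\varphi_k$, so $f\mid\omega_n$ for $n\ge k$, and let $e\ge2$. For $n>k$ write $\omega_n=fg_n$. The factor $g_n$ is prime to $f$ but lies in the maximal ideal. Hence $(f^e,\omega_n)=f\,(f^{e-1},g_n)\neq(f)$. The module $\Lambda/(f^e,\omega_n)$ therefore has free quotient $\Lambda/(f)$ and torsion $(f)/(f^e,\omega_n)\simeq\Lambda/(f^{e-1},g_n)$. Since $\varphi_j\equiv p \pmod f$ for $j>k$, this torsion has order $p^{(e-1)\deg(f)\,n+c}$ for $n\gg0$; it does not stabilize. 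For instance, $\Lambda/(T^2,\omega_n)\simeq\Z_p\oplus\Z/p^n\Z$.

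This case cannot be discarded. We have $\cha(Y)=\cha(X_{K_\infty})\cha(\Delta)$, where $\Delta=\Ker(\bigoplus_{v\in S}\Z_p[\Gamma/I_v]\to\Z_p)$. The characteristic polynomial of $\Delta$ is a product of $\varphi_k$'s of total degree $r$, and $r$ is also the stable rank $\sum_{j\ \mathrm{cyc}}\deg f_j$. Consequently $\sum_{j\ \mathrm{cyc}}(e_j-1)\deg f_j$ equals the degree of the cyclotomic part of $\cha(X_{K_\infty})$. This is positive in natural examples. Take the cyclotomic $\Z_p$-extension of an imaginary quadratic field in which an odd $p$ splits. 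There the classes of the primes above $p$ give $T\mid\cha(X_{K_\infty})$, while $r=1$, so $Y$ is pseudo-isomorphic to $\Lambda/(T^e)\oplus\cdots$ with $e\ge2$.

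So your formula $\lambda=\sum_{\mathrm{noncyc}}e_j\deg f_j$ undercounts; the correct value is $\lambda(Y)-r=\lambda(X_{K_\infty})$. The missing contribution is again linear in $n$, so the shape $\lambda n+\mu p^n+\nu$ survives once you redo this computation. As written, however, the argument rests on a false lemma and yields a wrong formula.

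The step you ``grant'' is true but needs an argument. The simplest route uses the projections $Y/\omega_{n+1}Y\to Y/\omega_nY$, and likewise for $E$. Once the $\Z_p$-rank is stable, these are surjections whose torsion-free quotients have equal rank, so they induce isomorphisms on those quotients. Hence the index of the lattice map induced by $Y\to E$ is independent of $n\ge n_1$. You also need the kernel and cokernel of $Y/\omega_nY\to E/\omega_nE$ to be eventually constant, not merely bounded. This requires an extra argument, for example using that $M[\omega_n]$ stabilizes for noetherian $M$.

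The paper avoids infinite coinvariants altogether. For $n\ge m$ it uses $0\to\omega_nY\to X_{K_\infty}\to A_{K_n}\to0$ (Proposition \ref{prop:ICNF_1}), so that $\ord_p\#A_{K_n}-\ord_p\#A_{K_m}=\ord_p\#(\omega_mY/\omega_nY)$ is the order of a finite module. Passing to $\omega_mY$, which has finite index in $X_{K_\infty}$, removes exactly the $\Delta$-part that causes both the free rank and the multiplicity issue above. The additive invariant $\rho_{\varphi_n}$ over a prime filtration then gives $\lambda=\lambda(X_{K_\infty})$ directly, without the structure theorem.
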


Our proof requires Theorem \ref{thm:main_ram_Iw} only as the arithmetic input.
It remains to apply an algebraic argument.
We take this opportunity to develop a systematic approach to Iwasawa-type asymptotic formulas via homological algebra and commutative algebra.
This enables us to avoid the use of the structure theorem for $\Z_p[[T]]$-modules, and to naturally specify the range of $n$ for which the asymptotic formula holds (see Theorem \ref{thm:main_ICNF} for the precise statement).

Here we only study the case of $\Z_p$-extensions.
It seems to be plausible to apply this idea to $\Z_p^d$-extensions, but we do not study it in this paper.
Even more generally, the natural variant of Theorem \ref{thm:main_ram_Iw} for $p$-adic Lie extensions is valid.
We hope that this construction will be useful in extending Iwasawa's asymptotic formula to more general extensions.
Such a formula is obtained by Cuoco--Monsky \cite{CM81} for $\Z_p^d$-extensions, and Lei \cite{Lei17} and Liang--Lim \cite{LL19} study the case $\Z_p^d \rtimes \Z_p$.

\subsection{Duality of class groups along $\Z_p^d$-extensions}\label{ss:intro_duality}

In Section \ref{sec:duality}, we deduce a duality theorem in Iwasawa theory as an application of Theorem \ref{thm:main_ram_Iw}.
Let $K_{\infty}/K$ be a $\Z_p^d$-extension with $\Gamma = \Gal(K_{\infty}/K)$ and we set $\Lambda = \Z_p[[\Gamma]]$.
We define
\[
A_{K_{\infty}} = \varinjlim_{K'} A_{K'},
\]
where $K'$ runs over the intermediate number fields of $K_{\infty}/K$ and the transition maps are (the $p$-parts of) the capitulation maps.
Then $A_{K_{\infty}}$ is a discrete $\Lambda$-module and its Pontryagin dual $A_{K_{\infty}}^{\vee}$ is a profinite $\Lambda$-module.
Here, for a discrete or profinite $\Z_p$-module $M$ with an action of an abelian group $G$, we equip its Pontryagin dual $M^{\vee} = \Hom_{\Z_p}(M, \Q_p/\Z_p)$ with an action of $G$ by
\[
(\sigma f)(x) = f(\sigma x)
\]
for $f \in M^{\vee}$, $\sigma \in G$, and $x \in M$.

The main theorem is the following.

\begin{thm}\label{thm:intro_duality}
Let $K_{\infty}/K$ be a $\Z_p^d$-extension.
Then $A_{K_{\infty}}^{\vee}$ and $X_{K_{\infty}}$ are both finitely generated torsion over $\Lambda$ and pseudo-isomorphic to each other.
\end{thm}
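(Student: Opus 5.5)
We derive the duality from Theorem \ref{thm:main_ram_Iw}, using the module $Y = Y_{K_{\infty}}$ as a bridge. The key point is that $Y$ is built from finite-level data: each $(Y)_{\Gamma'}$ is controlled by $A_{K'}$ through \eqref{eq:Iw_fin}. Since $Y$ is a finitely generated torsion $\Lambda$-module, the algebraic duality theory of Iwasawa adjoints applies. There one has, for instance, that $\varinjlim_{\Gamma'} (Y_{\Gamma'})_{\tors}$ or $\varinjlim_{\Gamma'} Y^{\Gamma'}$, taken with suitable transition maps, has Pontryagin dual pseudo-isomorphic to $\Ext^1_{\Lambda}(Y,\Lambda)$. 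The module $\Ext^1_{\Lambda}(Y,\Lambda)$ is in turn pseudo-isomorphic to $Y^{\iota}$, or to $Y$ itself with our twisted action convention on duals.

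\textbf{Step 1 (finite generation and torsionness).} First deduce from \eqref{eq:Iw_inf} that $X_{K_{\infty}}$ is finitely generated torsion, being a submodule of $Y$. To prove the finiteness claim directly, one argues via \eqref{eq:Iw_fin} with $K'=K$: the coinvariants $Y_\Gamma$ are finitely generated over $\Z_p$, so Nakayama gives that $Y$ is finitely generated. Moreover $(Y)_{\Gamma'}$ has $\Z_p$-rank bounded by $\#S$ uniformly in $\Gamma'$, which forces $Y$ to be torsion. Next, \eqref{eq:Iw_inf} shows that $X_{K_\infty}$ and $Y$ differ by the kernel $Z$ of $\bigoplus_{v\in S}\Z_p[[\Gamma/I_v]]\to\Z_p$. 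Since ramified primes have $I_v\neq 1$, each $\Z_p[[\Gamma/I_v]]$ is torsion. However, $Z$ is not pseudo-null when $\operatorname{rank} I_v=1$, so we cannot discard it. Instead we must track the analogous contribution on the direct-limit side.

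\textbf{Step 2 (direct limit side).} Taking $\Z_p$-torsion in \eqref{eq:Iw_fin} gives $A_{K'}\simeq ((Y)_{\Gamma'})_{\tors}$ modulo a correction from the cokernel of $A_{K'}\to (Y)_{\Gamma'}$. That cokernel is the kernel of $\bigoplus_v \Z_p[\Gamma/\Gamma' I_v]\to \Z_p$, which is $\Z_p$-free, so the correction vanishes and $A_{K'}=(Y_{\Gamma'})_{\tors}$ exactly. We then need to verify that the capitulation maps $A_{K'}\to A_{K''}$ correspond to the norm maps $(Y)_{\Gamma'}\to (Y)_{\Gamma''}$, given by $\sum_{\sigma\in \Gamma'/\Gamma''}\sigma$, restricted to torsion. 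This compatibility must come from the construction in Subsection \ref{ss:ram_inf}: extension of ideals should correspond to the transfer on the constructed modules. Granting it, $A_{K_\infty}\simeq \varinjlim_{\Gamma'} (Y_{\Gamma'})_{\tors}$ along norm maps.

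\textbf{Step 3 (algebra).} Prove the general lemma: for a finitely generated torsion $\Lambda$-module $M$, the dual $\bigl(\varinjlim_{\Gamma'}(M_{\Gamma'})_{\tors}\bigr)^\vee$ is pseudo-isomorphic to $\Ext^1_\Lambda(M,\Lambda)$, and the latter is pseudo-isomorphic to $M$ under our action convention. To do this, pick a cofinal chain $\Gamma_n=\Gamma^{p^n}$. Use a presentation $0\to P\to\Lambda^r\to M\to 0$, or reduce via pseudo-isomorphism to elementary modules $\Lambda/(f)$. Then compute the torsion of coinvariants as $\Ext$ over $\Z_p[\Gamma/\Gamma_n]$ and pass to the limit, using local duality $\varprojlim \Hom(\ -,\Z_p[\Gamma/\Gamma_n])\simeq \Hom_\Lambda(-,\Lambda)$. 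Applying the lemma to $M=Y$ gives $A_{K_\infty}^\vee\sim Y$. Separately, we need $Y\sim X_{K_\infty}$, which amounts to showing the extra terms on both sides match; equivalently, apply the lemma functorially to $0\to X\to Y\to Z\to 0$ and show that the contribution of $Z$ cancels. The main obstacle is Step 3, specifically making the reduction to elementary modules compatible with the limit of torsion parts, since pseudo-null and finite errors must be shown to stay pseudo-null. I would first try the Jannsen-style spectral sequence argument with $\Ext^i_\Lambda(-,\Lambda)$ to handle it uniformly.
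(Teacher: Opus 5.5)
\textbf{There is a genuine gap: the lemma in Step 3 is false in the situation you need it.} Your Steps 1--2 match the paper's set-up, including $A_{K'}=(Y_{\Gamma'})_{\tors}$ and the norm/capitulation compatibility of Lemma \ref{lem:funct_ram}.

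For finitely generated torsion $M$, the module $\bigl(\varinjlim_{\Gamma'}(M_{\Gamma'})_{\tors}\bigr)^{\vee}=\varprojlim_{\Gamma'}((M_{\Gamma'})_{\fin})^{\vee}$ (this is $\fF(M)$ in the paper) is in general only a submodule of $E^1(M)$. Jannsen's exact sequence $0\to\fF(M)\to E^1(M)\to\varprojlim_U(H_1(U,M)\otimes\Q_p/\Z_p)^{\vee}\to 0$ has a third term that need not be pseudo-null once the coinvariants are infinite. For example, for $f=\gamma-1$ with $\gamma\neq 1$ one has $\fF(\Lambda/f)=0$ but $E^1(\Lambda/f)\simeq\Lambda/f$.

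The adjoint statement you are recalling requires all $M_{\Gamma'}$ to be finite, and $Y$ violates this. By \eqref{eq:Iw_fin}, $\rank_{\Z_p}Y_{\Gamma'}=\sum_{v\in S}[\Gamma:\Gamma'I_v]-1$, which is positive unless a single prime ramifies and does so totally. The same formula shows your Step 1 bound ``rank at most $\#S$'' is wrong for $d\geq 2$. Torsionness of $Y$ still follows, by comparing this $O(p^{n(d-1)})$ growth with the growth $\geq rp^{nd}$ forced by a $\Lambda$-rank $r\geq 1$.

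Consequently your intermediate conclusion $A_{K_\infty}^{\vee}\sim Y$ is false in general. Since $\cha(Y)=\cha(X_{K_\infty})\cha(Z)$ and $Z$ need not be pseudo-null, it contradicts the theorem you are proving. Your proposed repair, letting the contribution of $Z$ cancel in $0\to X\to Y\to Z\to 0$, cannot work either. The functor $M\mapsto\fF(M)$, like $M\mapsto\varprojlim(M_{\Gamma'})_{\fin}$, is not exact even modulo pseudo-null modules. Apply it to $0\to\Lambda/f\to\Lambda/f^2\to\Lambda/f\to0$ with $f=\gamma-1$: the outer terms go to $0$ and the middle one to $(f)/(f^2)\simeq\Lambda/f$.

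\textbf{The missing idea is the right comparison object.} Compare $A_{K_\infty}^{\vee}\simeq\fF(Y)$ not with $E^1(Y)$ or $Y$, but with $\fG(Y)=\varprojlim_{\Gamma'}(Y_{\Gamma'})_{\fin}$. This is isomorphic to $X_{K_\infty}$ by your Step 2 together with Lemma \ref{lem:funct_ram}(1). Both $\fG(Y)$ and $\fF(Y)$ discard the same non-torsion part of the coinvariants.

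The paper then proves $\fG(M)\sim\fF(M)$ for every finitely generated $M$ without using any exactness:
\begin{itemize}
\item Both functors are $\Lambda$-linear, and any $\Lambda$-linear functor preserves pseudo-isomorphisms. This holds because $\alpha_{\fp}$ is an isomorphism iff there are $\beta$ and $s\notin\fp$ with $\alpha\beta=s$ and $\beta\alpha=s$.
\item Both vanish on $\Lambda$, which yields torsionness and reduces to $M_{\tors}$.
\item On $\Lambda/f$ they are isomorphic, via the perfect pairing $(x,y)=xy/f$ (suitably interpreted) on $(\Z_p[\Gamma/U]/f)_{\fin}$, compatible with projections and norms.
\end{itemize}
The structure theorem then finishes. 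So your Step 3 has to be replaced by this comparison of $\fG$ with $\fF$, rather than an adjoint computation for $Y$.
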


This theorem is not new, and indeed it is one of the main theorems of Lai--Tan \cite[Theorem 2]{LT21}.
However, the proof there requires a delicate argument, while the proof in the present paper, outlined below, appears to be more streamlined.
As explained in the introduction of \cite{LT21}, Theorem \ref{thm:intro_duality} for $d = 1$ is proved by Iwasawa \cite[page 266, Theorem 11]{Iwa73}, and partial results for general $d \geq 1$ had been obtained by Nekov\'{a}\v{r} \cite[\S 9.4, \S 9.5]{Nek06} and Vauclair \cite[Theorem 6.7 and Corollary 6.8]{Vau09}.

Our proof proceeds as follows.
The arithmetic input is again Theorem \ref{thm:main_ram_Iw} only.
Because of \eqref{eq:Iw_fin}, we have $A_{K'} \simeq ((Y_{K_{\infty}})_{\Gamma'})_{\fin}$ and so 
\[
X_{K_{\infty}} \simeq \varprojlim_{\Gamma'} ((Y_{K_{\infty}})_{\Gamma'})_{\fin}
\]
and
\[
A_{K_{\infty}}^{\vee} 
\simeq \left( \varinjlim_{\Gamma'} ((Y_{K_{\infty}})_{\Gamma'})_{\fin} \right)^{\vee}
\simeq \varprojlim_{\Gamma'} (((Y_{K_{\infty}})_{\Gamma'})_{\fin})^{\vee},
\]
where $\Gamma'$ runs over the open subgroups of $\Gamma$.
These lead us to study two functors $\fG$ and $\fF$ defined by
\[
\fG(M) = \varprojlim_{\Gamma'} (M_{\Gamma'})_{\fin},
\quad
\fF(M) = \varprojlim_{\Gamma'} ((M_{\Gamma'})_{\fin})^{\vee}.
\]
We will indeed prove that, for any finitely generated $\Lambda$-module $M$, both $\fG(M)$ and $\fF(M)$ are finitely generated torsion over $\Lambda$ and pseudo-isomorphic to each other.
This claim sounds non-trivial since $\fF$ is contravariant while $\fG$ is covariant.
When $d = 1$, Lee \cite{Lee20} studied the functors $\fG$ and $\fF$ in a somewhat different context, and proved this claim \cite[Corollary A.2.13]{Lee20}.
For $d \geq 1$, Lim \cite[Corollary 2.5]{Lim24} showed this claim, assuming $M_{\Gamma'}$ are all finite; this assumption does not hold in our case, but does hold in the higher $K$-theoretic version, which is the theme in \cite{Lim24}.

We will also observe that $A_{K_{\infty}}^{\vee}$ has no nonzero pseudo-null submodules.
We then deduce the following, which is again not new (see Lai--Tan \cite[Corollary 1.2.1]{LT21}).

\begin{cor}\label{cor:GGC_equiv}
The following are equivalent.
\begin{itemize}
\item[(i)]
The $\Lambda$-module $X_{K_{\infty}}$ is pseudo-null.
\item[(ii)]
The $\Lambda$-module $A_{K_{\infty}}^{\vee}$ is pseudo-null.
\item[(iii)]
The $\Lambda$-module $A_{K_{\infty}}$ is trivial.
\end{itemize}
\end{cor}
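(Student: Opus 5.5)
We deduce the corollary from Theorem \ref{thm:intro_duality}, together with the assertion noted just before the corollary that $A_{K_{\infty}}^{\vee}$ has no nonzero pseudo-null $\Lambda$-submodules. We prove (i)$\Leftrightarrow$(ii) and (ii)$\Leftrightarrow$(iii) separately.

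For (i)$\Leftrightarrow$(ii), we use that $X_{K_{\infty}}$ and $A_{K_{\infty}}^{\vee}$ are finitely generated torsion over $\Lambda$ and pseudo-isomorphic by Theorem \ref{thm:intro_duality}. A pseudo-isomorphism $f \colon M \to N$ of finitely generated torsion $\Lambda$-modules has pseudo-null kernel and cokernel. Hence $M$ is pseudo-null if and only if $f(M)$ is, and this holds if and only if $N$ is. Here we use that pseudo-null modules form a Serre subcategory of finitely generated $\Lambda$-modules: the condition is that the localization at every height one prime vanishes, and localization is exact. Since pseudo-isomorphism is not symmetric for $d \ge 2$, we only need a single map in one direction, whichever one Theorem \ref{thm:intro_duality} supplies. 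For this equivalence the direction does not matter.

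For (ii)$\Leftrightarrow$(iii), the implication (iii)$\Rightarrow$(ii) is trivial, since $A_{K_{\infty}}=0$ gives $A_{K_{\infty}}^{\vee}=0$. For (ii)$\Rightarrow$(iii), if $A_{K_{\infty}}^{\vee}$ is pseudo-null, then it is a pseudo-null submodule of itself. Its lack of nonzero pseudo-null submodules forces $A_{K_{\infty}}^{\vee}=0$, and Pontryagin duality then gives $A_{K_{\infty}}=0$.

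The main obstacle is therefore the absence of nonzero pseudo-null submodules in $A_{K_{\infty}}^{\vee}$, if it must be proved here rather than quoted. I would prove it via $A_{K_{\infty}}^{\vee} \simeq \fF(Y_{K_{\infty}}) = \varprojlim_{\Gamma'} (((Y_{K_{\infty}})_{\Gamma'})_{\fin})^{\vee}$. Dualizing the finite torsion part of $(Y_{K_{\infty}})_{\Gamma'}$ identifies it with $\Ext^1_{\Z_p}((Y_{K_{\infty}})_{\Gamma'},\Z_p)$. In the limit this should relate $\fF(Y_{K_{\infty}})$ to $\Ext^1_{\Lambda}(Y_{K_{\infty}},\Lambda)$, or to a module of the form $\varprojlim \Hom$ into a dualizing object, via a spectral sequence comparing $\Ext_{\Lambda}$ with $\Ext_{\Z_p[\Gamma/\Gamma']}$. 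One then uses the standard fact that $\Ext^1_\Lambda(M,\Lambda)$ of a finitely generated torsion module has no nonzero pseudo-null submodules, which comes from the reflexive/depth properties of $\Ext$ over the regular ring $\Lambda$. The delicate point is controlling the limit. Concretely, I would check that the transition maps, which are induced by capitulation and so correspond to corestriction on the dual side, match the natural maps on $\Ext$, and that the error terms from the nonfinite free parts $\bigoplus_{v\in S}\Z_p[\Gamma/\Gamma' I_v]$ do not contribute pseudo-null pieces.
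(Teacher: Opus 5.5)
Your proposal is correct and matches the paper's proof. There, (i)$\Leftrightarrow$(ii) comes straight from Theorem \ref{thm:intro_duality}, and (ii)$\Leftrightarrow$(iii) from PN-freeness of $A_{K_{\infty}}^{\vee}\simeq\fF(Y_{K_{\infty}})$. The paper proves that PN-freeness just as you sketch, using Jannsen's exact sequence, which gives an injection $\fF(M)\hookrightarrow E^1(M)$ (Lemma \ref{lem:G_fg}(2)), together with PN-freeness of $E^1$ of a finitely generated torsion module.

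Because only this injectivity is needed, you do not have to control the cokernel or the free parts $\bigoplus_{v\in S}\Z_p[\Gamma/\Gamma' I_v]$. You do, however, have to invoke that $Y_{K_{\infty}}$ is torsion (Theorem \ref{thm:main_ram_Iw}). Also, pseudo-isomorphism \emph{is} symmetric on finitely generated torsion modules, although this does not affect your argument.
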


There is a classical conjecture of Greenberg \cite[Conjecture 3.5]{Gree01} concerning the pseudo-nullity of the Iwasawa modules for $\Z_p^d$-extensions.
This corollary gives an equivalent condition for the conjecture.
Moreover, Theorem \ref{thm:main_ram_Iw} itself provides a reformulation of the pseudo-nullity in terms of $Y_{K_{\infty}}$, which might be easier to study compared to $X_{K_{\infty}}$ itself since $Y_{K_{\infty}}$ behaves well under Galois coinvariants.

As noted in Subsection \ref{ss:int_var}, both Theorem \ref{thm:intro_duality} and Corollary \ref{cor:GGC_equiv} are valid for $X'_{K_{\infty}}$ instead of $X_{K_{\infty}}$ as well.
This recovers other results of \cite{LT21}, \cite{Nek06}, and \cite{Vau09}.

\section{Unramified case}\label{sec:unr}

In this section, we review the work of Gruenberg from our perspective.
We prove Theorem \ref{thm:main_unr} in Subsection \ref{ss:unr_pf}, and then discuss some applications in Subsection \ref{ss:unr_app}.
In Subsection \ref{ss:unr_inf}, we show that the same method applies to infinite extensions.
This section contains no essentially new results, and serves as a prototype for the next section.

\subsection{Proof of Theorem \ref{thm:main_unr}}\label{ss:unr_pf}

The theorem essentially follows by applying Gruenberg's ``translation functor'' \cite[Section 10.5]{Gru76} (see also \cite[Proposition 1]{RW96}) from group extensions to module extensions.

Let $K$ be a number field.
For each finite extension $L/K$, let $H(L)$ be the Hilbert class field of $L$, that is, the maximal unramified abelian extension.
Then by class field theory, the reciprocity map gives an isomorphism $\Cl(L) \simeq \Gal(H(L)/L)$.

We now construct the module $B(L)$.

\begin{defn}
Let $L/K$ be a finite unramified Galois extension and set $G = \Gal(L/K)$.
We take an auxiliary finite extension $F/L$ satisfying the following conditions:
\begin{itemize}
\item[(a)]
$F\supset H(L)$,
\item[(b)]
$F/L$ is an unramified extension, and
\item[(c)]
$F/K$ is a Galois extension.
\end{itemize}
For example, the minimal choice is $F = H(L)$.
We then set $\cG = \Gal(F/K)$ (by condition (c)).
Let $\Delta \cG$ denote the augmentation ideal of $\Z[\cG]$, which we regard as a left $\Z[\cG]$-module.
We define
\[
B(L) = (\Delta \cG)_{\Gal(F/L)}
= \Z[G] \otimes_{\Z[\cG]} \Delta \cG,
\]
which is a left $\Z[G]$-module.
This $B(L)$ is independent of the choice of $F$, up to isomorphism (see Remark \ref{rem:B_unr_indep}).
\end{defn}

We now show that this module $B(L)$ satisfies the property stated in Theorem \ref{thm:main_unr}.

\begin{proof}[Proof of Theorem \ref{thm:main_unr}]
As in the statement, let $K'$ be any intermediate field of $L/K$ and set $G' = \Gal(L/K')$.
We also set $\cG' = \Gal(F/K')$.
The situation is described as follows.
\begin{center}
\begin{tikzpicture}
  \node (L)  at (0,3) {$L$};
  \node (Kp) at (0,1.5) {$K'$};
  \node (K)  at (0,0) {$K$};
  \node (F)  at (2,4) {$F$};

  \draw (F)  --  (L);
  \draw (K)  --  (Kp);
  \draw[bend left=50] (K)  to node[left]  {$G$} (L);
  \draw (Kp) -- node[left]  {$G'$} (L);

  \draw[bend right=20] (K)  to node[right] {$\cG$} (F);
  \draw (Kp) -- node[below] {$\cG'$} (F);
\end{tikzpicture}
\end{center}
The exact sequence
\[
0 \to \Delta \cG \to \Z[\cG] \to \Z \to 0
\]
induces, by taking $\Z \otimes_{\Z[\cG']} (-)$, an exact sequence
\[
0 \to H_1(\cG', \Z) \to (\Delta \cG)_{\cG'} \to \Z[\cG' \backslash \cG] \to \Z \to 0.
\]
Here, we used the fact that $H_1(\cG', \Z[\cG]) = 0$ since $\Z[\cG]$ is free as a left $\Z[\cG']$-module.
We now observe $\cG' \backslash \cG \simeq G' \backslash G$ (as sets) and
\[
(\Delta \cG)_{\cG'} 
\simeq ((\Delta \cG)_{\Gal(F/L)})_{G'}
= B(L)_{G'}.
\]
We also have
\[
H_1(\cG', \Z) 
\simeq \cG'_{\ab}
\simeq \Gal(H(K')/K')
\simeq \Cl(K'),
\]
where the middle isomorphism holds since $F/K'$ is unramified (by condition (b) and $L/K$ being unramified) and $F \supset H(L) \supset H(K')$ (by condition (a)).
This completes the proof of Theorem \ref{thm:main_unr}.
\end{proof}

\begin{rem}\label{rem:B_unr_indep}
Let us show that the $\Z[G]$-module $B(L)$ is independent of the choice of $F$, up to isomorphism.
Let $F_0 = H(L)$ be the minimal choice, so $F_0 \subset F$ holds.
Set $\cG_0 = \Gal(F_0/K)$.
Then we have a commutative diagram
\[
\xymatrix{
0 \ar[r] & \Delta \cG \ar[r] \ar@{->>}[d] & \Z[\cG] \ar[r] \ar@{->>}[d] & \Z \ar[r] \ar@{=}[d] & 0\\
0 \ar[r] & \Delta \cG_0 \ar[r] & \Z[\cG_0] \ar[r] & \Z \ar[r] & 0,
}
\]
which then induces a commutative diagram
\[
\xymatrix{
0 \ar[r] & H_1(\Gal(F/L), \Z) \ar[r] \ar[d]_{\simeq} & (\Delta \cG)_{\Gal(F/L)} \ar[r] \ar@{->>}[d] & \Z[G] \ar[r] \ar@{=}[d] & \Z \ar[r] \ar@{=}[d] & 0\\
0 \ar[r] & H_1(\Gal(F_0/L), \Z) \ar[r] & (\Delta \cG_0)_{\Gal(F_0/L)} \ar[r] & \Z[G] \ar[r] & \Z \ar[r] & 0.
}
\]
Here, the left vertical map is an isomorphism since both are identified with $\Gal(H(L)/L) \simeq \Cl(L)$ as in the proof above (these two horizontal sequences are nothing but \eqref{eq:fund_unr}).
Therefore, by the five lemma, the surjective homomorphism $(\Delta \cG)_{\Gal(F/L)} \to (\Delta \cG_0)_{\Gal(F_0/L)}$ is indeed an isomorphism.
\end{rem}

By construction, we obtain the following functorial properties.

\begin{lem}\label{lem:funct}
Let $K'' \supset K'$ be intermediate fields of $L/K$ and set $G'' = \Gal(L/K'') \subset G' = \Gal(L/K')$.
The following hold.
\begin{itemize}
\item[(1)]
We have a commutative diagram
\[
\xymatrix{
0 \ar[r] & \Cl(K'') \ar[r] \ar[d] & B(L)_{G''} \ar[r] \ar@{->>}[d] & \Z[G'' \backslash G] \ar[r] \ar@{->>}[d] & \Z \ar[r] \ar@{=}[d] & 0\\
0 \ar[r] & \Cl(K') \ar[r] & B(L)_{G'} \ar[r] & \Z[G' \backslash G] \ar[r] & \Z \ar[r] & 0,
}
\]
where the left vertical arrow is the norm map and the other vertical arrows are the natural ones.
\item[(2)]
We have a commutative diagram
\[
\xymatrix{
0 \ar[r] & \Cl(K'') \ar[r] & B(L)_{G''} \ar[r] & \Z[G'' \backslash G] \ar[r] & \Z \ar[r] & 0\\
0 \ar[r] & \Cl(K') \ar[r] \ar[u]& B(L)_{G'} \ar[r] \ar[u] & \Z[G' \backslash G] \ar[r] \ar@{^(->}[u] & \Z \ar[r] \ar@{^(->}[u] & 0,
}
\]
where the left vertical arrow is the capitulation map and the other vertical arrows are induced by a norm element of $G'/G''$; in particular, the right-most map is the multiplication by $[G': G'']$.
\end{itemize}
\end{lem}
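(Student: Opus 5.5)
Both diagrams come from applying $\Z \otimes_{\Z[\cG'']}(-)$ and $\Z \otimes_{\Z[\cG']}(-)$ to the single sequence $0 \to \Delta \cG \to \Z[\cG] \to \Z \to 0$, exactly as in the proof of Theorem \ref{thm:main_unr}. Here $\cG'' = \Gal(F/K'') \subset \cG' = \Gal(F/K')$, and we use one auxiliary field $F$ for both. The diagrams are then obtained by comparing these two long exact homology sequences via maps of coefficient functors. The only arithmetic input is the identification of the induced maps on $H_1$ with the norm and capitulation maps, under $H_1(\cG', \Z) \simeq \cG'_{\ab} \simeq \Cl(K')$.

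For (1), I use the natural projection $M_{\cG''} \to M_{\cG'}$, which is a morphism of functors $H_0(\cG'', -) \to H_0(\cG', -)$. It extends to corestriction $\mathrm{cor}\colon H_\ast(\cG'', -) \to H_\ast(\cG', -)$ compatibly with connecting maps, so the diagram of long exact sequences commutes. On $\Z[\cG]$ and $\Z$ this gives the natural surjection $\Z[G''\backslash G] \to \Z[G'\backslash G]$ and the identity on $\Z$. On $(\Delta\cG)_{\cG''} = B(L)_{G''}$ it gives the natural projection to $B(L)_{G'}$, which is surjective. On $H_1$, corestriction $H_1(\cG'',\Z) \to H_1(\cG',\Z)$ is the map $\cG''_{\ab} \to \cG'_{\ab}$ induced by inclusion. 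Under Artin reciprocity this is $\Gal(H(K'')/K'') \to \Gal(H(K')/K')$, restriction of automorphisms, and class field theory says it corresponds to the norm $\Cl(K'') \to \Cl(K')$.

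For (2), I use the transfer morphism $H_0(\cG', M) \to H_0(\cG'', M)$, $[m] \mapsto \left[\sum_{\sigma \in \cG''\backslash \cG'} \sigma m\right]$, where $\sigma$ runs over coset representatives of $\cG''\backslash\cG'$; this map is well defined on coinvariants. It extends to restriction maps $\mathrm{res}\colon H_\ast(\cG', -) \to H_\ast(\cG'', -)$ compatible with connecting homomorphisms. Since $\cG''\backslash\cG' \simeq G''\backslash G'$, the induced maps on the $H_0$ terms are multiplication by a norm element of $G'/G''$. On $\Z$ this is multiplication by $[G':G'']$, which is injective. On $\Z[G'\backslash G] \to \Z[G''\backslash G]$ it sends a coset $G'g$ to the sum of the $G''$-cosets contained in it, which is also injective. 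On $H_1$ the map is the group-theoretic transfer $\cG'_{\ab} \to \cG''_{\ab}$. By the classical transfer theorem (the Artin--Furtwängler compatibility), this corresponds under reciprocity to the map $\Cl(K') \to \Cl(K'')$ induced by extension of ideals, i.e.\ the capitulation map.

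The main obstacle is keeping the identifications consistent. First, the isomorphism $H_1(\cG',\Z) \simeq \cG'_{\ab}$ must be the one arising as the connecting map of $0\to\Delta\cG\to\Z[\cG]\to\Z\to0$, namely $\sigma \mapsto [\sigma-1]$, so that it is compatible with corestriction and restriction. Second, the action of $\Z[\cG']$ on $\Z[\cG]$ must be on the left, matching the right cosets $G'\backslash G$. Once these conventions match those used in the proof of Theorem \ref{thm:main_unr}, commutativity is formal. The facts that corestriction and restriction on $H_1$ agree with the inclusion-induced map and the transfer are standard homological algebra, and I would cite them rather than reprove them.
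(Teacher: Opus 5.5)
Your proposal is correct and follows the same route as the paper, whose proof simply notes that the reciprocity isomorphism $\Cl(K') \simeq H_1(\Gal(F/K'),\Z)$ turns the norm map into the inclusion-induced map and the capitulation map into the transfer, and deduces both diagrams from that. You also spell out what the paper leaves implicit: that corestriction and the homological transfer are morphisms of $\delta$-functors, so the squares involving the connecting map $H_1 \to (\Delta\cG)_{\cG'}$ commute.
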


\begin{proof}
It is known that the reciprocity isomorphism
\[
\Cl(K') \simeq \Gal(H(K')/K') \simeq H_1(\Gal(F/K'), \Z)
\]
satisfies the commutative diagrams
\[
\xymatrix{
	\Cl(K'') \ar[r]^-{\simeq} \ar[d] & H_1(\Gal(F/K''), \Z) \ar[d]\\
	\Cl(K') \ar[r]_-{\simeq} & H_1(\Gal(F/K'), \Z),
}
\quad
\xymatrix{
	\Cl(K'') \ar[r]^-{\simeq} & H_1(\Gal(F/K''), \Z)\\
	\Cl(K') \ar[r]_-{\simeq} \ar[u] & H_1(\Gal(F/K'), \Z). \ar[u]
}
\]
Here, the left vertical maps are the norm and capitulation maps, respectively.
The right vertical maps are the homomorphism induced by the inclusion $\Gal(F/K'') \hookrightarrow \Gal(F/K')$ and the transfer map, respectively.
Claims (1) and (2) follow from these diagrams, respectively.
\end{proof}

\subsection{Consequences}\label{ss:unr_app}

We briefly discuss how Theorem \ref{thm:main_unr} applies to the study of ideal class groups.
Throughout this subsection, we suppose that $L/K$ is a finite unramified Galois extension and set $G = \Gal(L/K)$.

\subsubsection{The norm map}

The kernel and the cokernel of the norm map $\Cl(L)_G \to \Cl(K)$ are described as follows.

\begin{prop}\label{prop:norm_unr}
We have an exact sequence
\[
H_2(G, \Z) \to \Cl(L)_G \to \Cl(K) \to H_1(G, \Z) \to 0.
\]
\end{prop}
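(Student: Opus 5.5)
Starting from the exact sequence \eqref{eq:fund_unr}, $0 \to \Cl(L) \to B(L) \to \Z[G] \to \Z \to 0$, I would split it into two short exact sequences of $\Z[G]$-modules:
\[
0 \to \Cl(L) \to B(L) \to \Delta G \to 0, \qquad 0 \to \Delta G \to \Z[G] \to \Z \to 0,
\]
where $\Delta G$ is the augmentation ideal of $\Z[G]$. Then I take $G$-homology of the first sequence. This gives an exact sequence
\[
H_1(G, B(L)) \to H_1(G, \Delta G) \to \Cl(L)_G \to B(L)_G \to (\Delta G)_G \to 0 .
\]

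Next I identify the terms. From the second sequence and $H_i(G, \Z[G]) = 0$ for $i \geq 1$, we get $H_1(G, \Delta G) \simeq H_2(G, \Z)$. We also get an exact sequence $0 \to H_1(G,\Z) \to (\Delta G)_G \to \Z \to \Z \to 0$. Since the last map is the identity (the augmentation $\Z[G]_G = \Z \to \Z$), this yields $(\Delta G)_G \simeq H_1(G, \Z)$, which is of course the classical $\Delta G/(\Delta G)^2 \simeq G_{\ab}$. By \eqref{eq:fund_unr2}, $B(L)_G \simeq \Cl(K)$. Substituting and discarding the first term gives
\[
H_2(G,\Z) \to \Cl(L)_G \to \Cl(K) \to H_1(G,\Z) \to 0.
\]

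The step I expect to need the most care is showing that the map $\Cl(L)_G \to \Cl(K)$ arising this way is really the norm map. For this I would invoke Lemma \ref{lem:funct}(1) with $K'' = L$ and $K' = K$. Its left square says that the composite $\Cl(L) \to B(L) \to B(L)_G$ is identified with the norm $\Cl(L) \to \Cl(K)$ under the isomorphism \eqref{eq:fund_unr2}. The map $\Cl(L)_G \to B(L)_G$ in the homology sequence is induced by $\Cl(L)\to B(L)$, so it agrees with the norm map, as required.

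As a sanity check, the map $\Cl(K) \to H_1(G,\Z) \simeq G_{\ab}$ should be the Artin map for the unramified extension $L/K$, which is surjective. This is consistent with the exactness on the right.
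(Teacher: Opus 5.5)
Your proposal is correct and is the paper's argument with the details filled in. The paper's proof consists of the same three steps: take $G$-homology of \eqref{eq:fund_unr}, which you split at $\Delta G$ and evaluate by dimension shifting; use \eqref{eq:fund_unr2} to replace $B(L)_G$ by $\Cl(K)$; and invoke Lemma \ref{lem:funct}(1) to identify $\Cl(L)_G \to \Cl(K)$ with the norm map.
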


\begin{proof}
This follows upon applying $H_{\bullet}(G, -)$ to \eqref{eq:fund_unr}, combined with \eqref{eq:fund_unr2} and Lemma \ref{lem:funct}(1).
\end{proof}

\begin{eg}
Suppose moreover that $G$ is a cyclic group.
Then we have $H_2(G, \Z) = 0$ and $H_1(G, \Z) \simeq G_{\ab} \simeq G$, so we conclude an exact sequence
\[
0 \to \Cl(L)_G \to \Cl(K) \to G \to 0.
\]
We also obtain
\[
\# \Cl(L)^G 
= \# \Cl(L)_G 
= \# \Cl(K) \cdot \frac{1}{\# G},
\]
where the first equality holds since $G$ is cyclic.
This is a very special case of Chevalley's ambiguous class number formula.
\end{eg}

\subsubsection{The capitulation map}

The capitulation map $\Cl(K) \to \Cl(L)$ induces a map
\[
\Cl(K) \to \Cl(L)^G,
\]
where $\Cl(L)^G$ denotes the $G$-invariant part of $\Cl(L)$.
Its kernel and cokernel are described via the Tate cohomology groups of $B(L)$ as follows.

\begin{prop}\label{prop:unr_cap}
We have an exact sequence
\[
0 \to \hat{H}^{-1}(G, B(L)) \to \Cl(K) \to \Cl(L)^G \to \hat{H}^0(G, B(L)) \to 0.
\]
\end{prop}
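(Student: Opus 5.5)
We will split the four-term sequence \eqref{eq:fund_unr} into two short exact sequences and compare the $G$-invariants of $B(L)$ with its $G$-coinvariants through the norm map. Write $J = \Delta G = \Ker(\Z[G] \to \Z)$. Then \eqref{eq:fund_unr} breaks into
\[
0 \to \Cl(L) \to B(L) \to J \to 0, \qquad 0 \to J \to \Z[G] \to \Z \to 0.
\]
The second sequence, with $\Z[G]$ cohomologically trivial, gives $\hat{H}^{i}(G, J) \simeq \hat{H}^{i-1}(G, \Z)$. In particular $\hat{H}^{-1}(G,J) \simeq \hat{H}^{-2}(G,\Z) \simeq H_1(G,\Z)$ and $\hat{H}^0(G,J) \simeq \hat{H}^{-1}(G,\Z) = 0$. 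Also $J^G = 0$, since $\Z[G]^G = \Z \cdot N_G$ and $N_G$ has augmentation $\#G \neq 0$.

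Next we take $G$-invariants of the first sequence:
\[
0 \to \Cl(L)^G \to B(L)^G \to J^G = 0.
\]
Hence $\Cl(L)^G \simeq B(L)^G$. On the other side, \eqref{eq:fund_unr2} gives $\Cl(K) \simeq B(L)_G$. The norm element $N_G$ induces the map $B(L)_G \to B(L)^G$, and by definition of Tate cohomology we have the exact sequence
\[
0 \to \hat{H}^{-1}(G, B(L)) \to B(L)_G \xrightarrow{N_G} B(L)^G \to \hat{H}^0(G, B(L)) \to 0.
\]
Substituting the two identifications gives a sequence of exactly the shape claimed.

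It remains to check that, under these identifications, the norm map $B(L)_G \to B(L)^G$ corresponds to the capitulation map $\Cl(K) \to \Cl(L)^G$. This is the main step, and it is where the content of the statement lies. We will use Lemma \ref{lem:funct}(2) with $K'' = L$ and $K' = K$, so that $G'' = 1$ and $G' = G$. It gives a commutative square whose left vertical arrow is the capitulation map $\Cl(K) \to \Cl(L)$, whose middle vertical arrow $B(L)_G \to B(L)$ is induced by $N_G$, and whose horizontal arrows are the injections $\Cl(K) \hookrightarrow B(L)_G$ and $\Cl(L) \hookrightarrow B(L)$. Here $\Cl(K) \hookrightarrow B(L)_G$ is an isomorphism. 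The image of $N_G$ lies in $B(L)^G$, and $\Cl(L)^G = B(L)^G$ inside $B(L)$. Therefore the composite $\Cl(K) \simeq B(L)_G \xrightarrow{N_G} B(L)^G = \Cl(L)^G$ is the capitulation map. Substituting into the Tate sequence yields the proposition. The only care needed is to confirm that the Tate norm map coincides with the vertical map of Lemma \ref{lem:funct}(2) when $G''$ is trivial, which holds by construction.
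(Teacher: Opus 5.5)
Your proof is correct and follows the paper's own route. You identify $\Cl(L)^G \simeq B(L)^G$; your observation $(\Delta G)^G = 0$ is equivalent to the paper's remark that $\Z[G]^G \to \Z^G$ is injective. You then use $\Cl(K) \simeq B(L)_G$ and apply Lemma \ref{lem:funct}(2) with $K''=L$ to see that the norm map $B(L)_G \to B(L)^G$ is the capitulation map. Your computations of $\hat{H}^i(G, \Delta G)$ are correct but not needed.
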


\begin{proof}
This follows from the commutative diagram
\[
\xymatrix{
\Cl(L)^G \ar[r]^{\simeq} & B(L)^G \\
\Cl(K) \ar[r]_{\simeq} \ar[u] & B(L)_G \ar[u]
}
\]
obtained by Lemma \ref{lem:funct}(2).
The bottom isomorphism is \eqref{eq:fund_unr2}.
The top isomorphism is induced by \eqref{eq:fund_unr} since the induced map $\Z[G]^G \to \Z^G$ is injective.
The right vertical arrow is induced by the norm element of $G$, so its kernel and cokernel are the Tate cohomology groups as claimed.
\end{proof}

\begin{rem}\label{rem:Suzuki}
This description of the capitulation kernel as $\hat{H}^{-1}(G, B(L))$ was used by Suzuki \cite{Suz91} to show the following theorem:
\begin{quote}
For an unramified abelian extension $L/K$, we have
\[
[L: K] \mid \# \Ker(\Cl(K) \to \Cl(L)).
\]
\end{quote}
This is a simultaneous generalization of Hilbert 94 (the case where $L/K$ is cyclic) and the principal ideal theorem (the case where $L = H(K)$).
Gruenberg--Weiss \cite{GW00} reproved and refined this result.
\end{rem}

\subsection{Analogues for infinite extensions}\label{ss:unr_inf}

It is straightforward to generalize Theorem \ref{thm:main_unr} to the case where $L/K$ is an infinite extension.
Note that the existence of an infinite unramified extension $L/K$ is a subtle problem, called the class field tower problem, which is resolved by Golod--Shafarevich \cite{GS64}.

Let $\hatZ$ be the profinite completion of the ring $\Z$.
For a profinite group $G$, we define the completed group ring by
\[
\hatZ[[G]] = \varprojlim_U \hatZ[G/U],
\]
where $U$ runs over the open normal subgroups of $G$.
When $G'$ is a closed subgroup of $G$, we set
\[
\hatZ[[G' \backslash G]] 
= \hatZ \otimes_{\hatZ[[G']]} \hatZ[[G]].
\]
Similarly, when $I$ is another closed subgroup of $G$, we set
\[
\hatZ[[G' \backslash G/I]] 
= \hatZ \otimes_{\hatZ[[G']]} \hatZ[[G]] \otimes_{\hatZ[[I]]} \hatZ.
\]

Let $K$ be a number field.
For a (possibly infinite) algebraic extension $L/K$, we define
\[
X(L) = \varprojlim_{K'} \Cl(K')
\]
with respect to the norm maps, where $K'$ runs over the intermediate number fields of $L/K$.
Then $X(L)$ is a profinite $\hatZ$-module.
We also write $H(L)$ for the maximal unramified abelian extension of $L$.
We have an isomorphism $X(L) \simeq \Gal(H(L)/L)$.

We now construct a module $Y(L)$, which is a profinite variant of $B(L)$.

\begin{defn}\label{defn:Y(L)}
Let $L/K$ be a (possibly infinite) unramified Galois extension and set $G = \Gal(L/K)$.
We take an auxiliary (possibly infinite) extension $F/L$ satisfying the following conditions:
\begin{itemize}
\item[(a)]
$F\supset H(L)$,
\item[(b)]
$F/L$ is an unramified extension, and
\item[(c)]
$F/K$ is a Galois extension.
\end{itemize}
For example, the minimal choice is $F = H(L)$.
Alternatively, the maximal choice is $F = K_{\emptyset}$, the maximal unramified (not necessarily abelian) extension of $K$.
We then set $\cG = \Gal(F/K)$ (by condition (c)).
Let $\hatDelta \cG$ denote the augmentation ideal of $\hatZ[[\cG]]$.
We now define
\[
Y(L) = (\hatDelta \cG)_{\Gal(F/L)}
= \hatZ[[G]] \otimes_{\hatZ[[\cG]]} \hatDelta \cG,
\]
which is a left profinite $\hatZ[[G]]$-module.
This $Y(L)$ is independent of the choice of $F$, up to isomorphism (this follows in the same way as Remark \ref{rem:B_unr_indep}).
\end{defn}

If $L/K$ is a finite extension, we have $Y(L) = \hatZ \otimes_{\Z} B(L)$ by construction.
This module $Y(L)$ satisfies the following.

\begin{thm}\label{thm:main_unr_inf}
For any intermediate field $K'$ of $L/K$ with $G' = \Gal(L/K')$, we have an exact sequence
\[
0 \to X(K') \to Y(L)_{G'} \to \hatZ[[G' \backslash G]] \to \hatZ \to 0.
\]
\end{thm}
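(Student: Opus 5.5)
The plan is to mirror the proof of Theorem \ref{thm:main_unr}, replacing discrete group rings by completed ones and ordinary homology by continuous (profinite) homology. Let $K'$ be an intermediate field with $G' = \Gal(L/K')$, and put $\cG' = \Gal(F/K') \subset \cG$, a closed subgroup. We start from the exact sequence of profinite left $\hatZ[[\cG]]$-modules
\[
0 \to \hatDelta \cG \to \hatZ[[\cG]] \to \hatZ \to 0
\]
and apply $\hatZ \otimes_{\hatZ[[\cG']]} (-)$, i.e.\ take $\cG'$-coinvariants in the profinite sense. This gives a long exact sequence in continuous homology, and we want it to read
\[
0 \to H_1(\cG', \hatZ) \to (\hatDelta \cG)_{\cG'} \to \hatZ[[\cG' \backslash \cG]] \to \hatZ \to 0.
\]

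To obtain this, we need $H_1(\cG', \hatZ[[\cG]]) = 0$. The key point is that $\hatZ[[\cG]]$ is a projective profinite $\hatZ[[\cG']]$-module. This follows from the standard decomposition
\[
\hatZ[[\cG]] = \varprojlim_U \hatZ[\cG/U],
\]
with each $\hatZ[\cG/U]$ free over $\hatZ[\cG'U/U]$. Equivalently, one can use Shapiro's lemma: $H_1(\cG', \hatZ[[\cG]])$ is the inverse limit of $H_1$ of finite quotients, each of which vanishes. Since these are inverse limits of compact (profinite) groups, $\varprojlim$ is exact and commutes with homology. Hence the vanishing passes to the limit. I expect this step, making continuous homology and the exactness of inverse limits rigorous, to be the main technical obstacle. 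Everything else is formal.

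Next we identify the terms. First, the coinvariants are computed in stages:
\[
(\hatDelta\cG)_{\cG'} \simeq ((\hatDelta\cG)_{\Gal(F/L)})_{G'} = Y(L)_{G'},
\]
using that $\Gal(F/L)$ is a closed normal subgroup of $\cG'$ with quotient $G'$. Second,
\[
\hatZ[[\cG'\backslash\cG]] \simeq \hatZ[[G'\backslash G]],
\]
since $\cG'$ contains $\Gal(F/L)$, so $\cG'\backslash\cG \simeq G'\backslash G$ as profinite sets.

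Finally, for the class field theory input, $H_1(\cG', \hatZ) \simeq (\cG')^{\ab}$, the maximal abelian profinite quotient. Since $F/K'$ is unramified (by (b) and $L/K$ unramified) and $F \supset H(L) \supset H(K')$ by (a), the maximal abelian subextension of $F/K'$ is $H(K')$. Thus
\[
(\cG')^{\ab} \simeq \Gal(H(K')/K') \simeq X(K'),
\]
where the last isomorphism is the one recorded before Definition \ref{defn:Y(L)}; it follows by taking the inverse limit of the reciprocity isomorphisms over finite subextensions, which are compatible with norms. Substituting these identifications yields the claimed exact sequence.
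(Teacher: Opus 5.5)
Your proposal is correct and is the paper's proof: take $\cG'$-coinvariants of $0 \to \hatDelta\cG \to \hatZ[[\cG]] \to \hatZ \to 0$, use $H_1(\cG',\hatZ[[\cG]])=0$, and identify $H_1(\cG',\hatZ)\simeq(\cG')^{\ab}\simeq\Gal(H(K')/K')\simeq X(K')$, $(\hatDelta\cG)_{\cG'}\simeq Y(L)_{G'}$, and $\hatZ[[\cG'\backslash\cG]]\simeq\hatZ[[G'\backslash G]]$. You spell out the vanishing of $H_1(\cG',\hatZ[[\cG]])$, which the paper leaves implicit; note that the finite-level groups that vanish are $H_1(\cG'U/U,(\Z/n)[\cG/U])$ for the finite groups $\cG'U/U$, not $H_1(\cG',\hatZ[\cG/U])$, which need not vanish.
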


\begin{proof}
The same argument as in the proof of Theorem \ref{thm:main_unr} applies.
The exact sequence
\[
0 \to \hatDelta \cG \to \hatZ[[\cG]] \to \hatZ \to 0
\]
induces an exact sequence
\[
0 \to H_1(\cG', \hatZ) \to (\hatDelta \cG)_{\cG'} \to \hatZ[[\cG' \backslash \cG]] \to \hatZ \to 0
\]
with $\cG' = \Gal(F/K')$.
This is identified with the desired exact sequence.
\end{proof}

\section{Ramified case}\label{sec:ram}

In this section, we establish the ramified version of the results in Section \ref{sec:unr}.
The organization of this section closely follows that of Section \ref{sec:unr}.

\subsection{Proof of Theorem \ref{thm:main_ram}}\label{ss:ram_pf}

The following construction is the heart of this paper.

\begin{defn}\label{defn:B_S}
Let $K$ be a number field and $S$ a nonempty finite set of places of $K$.
Let $L/K$ be a finite Galois extension that is unramified outside $S$ and set $G = \Gal(L/K)$.
We take an auxiliary finite extension $F/L$ satisfying the following conditions:
\begin{itemize}
\item[(a)]
$F \supset H(L)$,
\item[(b)]
$F/L$ is unramified outside $S$, and
\item[(c)]
$F/K$ is a Galois extension.
\end{itemize}
For example, the minimal choice is $F = H(L)$.
We then set $\cG = \Gal(F/K)$ (by condition (c)).

For each $v \in S$, we take a place $u_v$ of $F$ lying above $v$, and write $w_v$ for the place of $L$ lying below $u_v$.
We then write $\cI_v \subset \cG$ and $I_v \subset G$ for the inertia subgroup of $u_v$ and $w_v$, respectively.
Note that the natural map $\cG \twoheadrightarrow G$ induces a surjective map $\cI_v \twoheadrightarrow I_v$.

We now define a left $\Z[\cG]$-module $\cB^{S, F}(L)$ as the kernel of the augmentation map
\[
\bigoplus_{v \in S} \Z[\cG/\cI_v] \to \Z,
\]
which is surjective since $S \neq \emptyset$.
We finally define
\[
B^S(L) = (\cB^{S, F}(L))_{\Gal(F/L)}
= \Z[G] \otimes_{\Z[\cG]} \cB^{S, F}(L),
\]
which is a left $\Z[G]$-module.
This $B^S(L)$ is independent of the choice of $F$, up to isomorphism (this follows in the same way as Remark \ref{rem:B_unr_indep}).
\end{defn}

It remains to show the exact sequence appearing in Theorem \ref{thm:main_ram}.
The two extreme cases \eqref{eq:fund_ram} and \eqref{eq:fund_ram2} are indeed easier to prove.
However, in order to show the general case, we have to deal with the double cosets in $\dcoset{\cG'}{\cG}{\cI_v}$.
We begin with two lemmas.

\begin{lem}\label{lem:dcoset}
Let $\cG$ be a finite group.
Let $\cG'$ and $\cI$ be subgroups of $\cG$.
Let $\sigma_1, \dots, \sigma_s \in \cG$ be a complete set of representatives of $\dcoset{\cG'}{\cG}{\cI}$.
Set $\cI^{\sigma} := \sigma \cI \sigma^{-1}$ for each $\sigma \in \cG$.
Then we have an isomorphism of left $\Z[\cG']$-modules
\[
\bigoplus_{j=1}^s \Z[\cG'/(\cI^{\sigma_j} \cap \cG')] \simeq \Z[\cG/\cI]
\]
induced by $\times \sigma_j$ at the $j$-th component.
\end{lem}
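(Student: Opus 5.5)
The plan is to treat this as the standard Mackey decomposition of a permutation module. It follows from the orbit decomposition of the left $\cG'$-set $\cG/\cI$ together with the identification of each orbit with a coset space via its stabilizer.

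First, I would observe that $\Z[\cG/\cI]$ is the permutation module on the finite left $\cG$-set $\cG/\cI$. Restricting to $\cG'$, we get $\Z[\cG/\cI] = \bigoplus_{\mathcal{O}} \Z[\mathcal{O}]$, where $\mathcal{O}$ runs over the $\cG'$-orbits in $\cG/\cI$. These orbits are exactly the images of the double cosets $\cG' \sigma_j \cI$. So the orbits are $\mathcal{O}_j = \cG' \cdot \sigma_j \cI$ for $j = 1, \dots, s$, and they are pairwise disjoint because the $\sigma_j$ represent distinct double cosets.

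Next, I would compute the stabilizer in $\cG'$ of the point $\sigma_j \cI \in \cG/\cI$. For $g \in \cG'$ we have $g\sigma_j\cI = \sigma_j\cI$ if and only if $\sigma_j^{-1} g \sigma_j \in \cI$, that is, $g \in \sigma_j \cI \sigma_j^{-1} = \cI^{\sigma_j}$. Hence the stabilizer is $\cI^{\sigma_j} \cap \cG'$. By the orbit–stabilizer bijection, the map $\cG'/(\cI^{\sigma_j}\cap\cG') \to \mathcal{O}_j$ given by $g(\cI^{\sigma_j}\cap \cG') \mapsto g\sigma_j\cI$ is a well-defined, $\cG'$-equivariant bijection. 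Linearizing gives an isomorphism of left $\Z[\cG']$-modules $\Z[\cG'/(\cI^{\sigma_j}\cap\cG')] \simeq \Z[\mathcal{O}_j]$. This is precisely the map induced by right multiplication by $\sigma_j$.

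Summing over $j$ yields the claimed isomorphism. There is no real obstacle here. The only point needing care is checking that the maps are well defined and $\cG'$-linear, and that the double coset decomposition gives the direct sum decomposition of $\Z[\cG/\cI]$; both checks are routine.
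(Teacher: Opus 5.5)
Your proposal is correct and is essentially the paper's argument. The paper checks directly that the map $\coprod_{j} \cG'/(\cI^{\sigma_j}\cap\cG') \to \cG/\cI$ is a well-defined bijection: your stabilizer computation gives well-definedness and injectivity on each component, and the double coset decomposition gives surjectivity and disjointness of the pieces, so phrasing it via orbit--stabilizer for the $\cG'$-set $\cG/\cI$ is just a repackaging of the same computation.
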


\begin{proof}
This is equivalent to the claim that the map
\[
\coprod_{j=1}^s \cG'/ (\cI^{\sigma_j} \cap \cG') \to \cG/\cI,
\]
induced by $\times \sigma_j$ at the $j$-th component, is a well-defined bijection.

We take any $j$ and any elements $\sigma', \sigma'' \in \cG'$.
Then
\begin{align*}
\sigma' \sigma_j \equiv \sigma'' \sigma_j \text{ in $\cG/\cI$}
& \Leftrightarrow
\sigma_j^{-1} (\sigma'')^{-1} \sigma' \sigma_j \in \cI \\
& \Leftrightarrow
(\sigma'')^{-1} \sigma' \in \cI^{\sigma_j} \cap \cG'\\
& \Leftrightarrow
\sigma' \equiv \sigma'' \text{ in $\cG'/(\cI^{\sigma_j} \cap \cG')$}.
\end{align*}
This shows the well-definedness as well as the injectivity on the $j$-th component.

For any element $\sigma \in \cG$, by the choice of $\sigma_1, \dots, \sigma_s$, there exist $j \in \{1, \dots, s\}$, $\sigma' \in \cG'$, and $\tau \in \cI$ such that $\sigma = \sigma' \sigma_j \tau$.
This immediately shows the surjectivity.
Since $j$ is unique, the injectivity also follows.
\end{proof}

\begin{lem}\label{lem:dcoset2}
In the same situation as in Lemma \ref{lem:dcoset}, we can construct the left vertical isomorphism in the following commutative diagram
\[
\xymatrix{
H_1(\cG', \Z[\cG/\cI]) \ar[r] \ar[d]_{\simeq} & H_1(\cG', \Z) \ar[d]^{\simeq} \\
\bigoplus_{j=1}^s (\cI^{\sigma_j} \cap \cG')_{\ab} \ar[r] & \cG'_{\ab},
}
\]
where the upper map is induced by the augmentation map $\Z[\cG/\cI] \to \Z$, the lower map by the inclusion map $\cI^{\sigma_j} \cap \cG' \to \cG'$, and the right vertical arrow is the standard one.
\end{lem}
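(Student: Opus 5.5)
Combine Lemma \ref{lem:dcoset} with Shapiro's lemma, then check compatibility with the augmentation map. By Lemma \ref{lem:dcoset}, we have an isomorphism of left $\Z[\cG']$-modules $\bigoplus_j \Z[\cG'/(\cI^{\sigma_j} \cap \cG')] \simeq \Z[\cG/\cI]$, given on the $j$-th component by $x \mapsto x\sigma_j$. Applying $H_1(\cG', -)$ gives
\[
H_1(\cG', \Z[\cG/\cI]) \simeq \bigoplus_{j=1}^s H_1(\cG', \Z[\cG'/(\cI^{\sigma_j} \cap \cG')]).
\]
For each $j$, write $\cH_j = \cI^{\sigma_j} \cap \cG'$. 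Since $\Z[\cG'/\cH_j] = \Z[\cG'] \otimes_{\Z[\cH_j]} \Z$ is an induced module, Shapiro's lemma gives $H_1(\cG', \Z[\cG'/\cH_j]) \simeq H_1(\cH_j, \Z) \simeq (\cH_j)_{\ab}$. Composing these yields the left vertical isomorphism.

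For commutativity, note that the composite $\Z[\cG'/\cH_j] \to \Z[\cG/\cI] \to \Z$ is just the augmentation of $\Z[\cG'/\cH_j]$, since right multiplication by $\sigma_j$ followed by augmentation sends each coset to $1$. So it suffices to treat one $j$: we must show that for a subgroup $\cH \subset \cG'$, the map $H_1(\cG', \Z[\cG'/\cH]) \to H_1(\cG', \Z)$ induced by augmentation corresponds, under Shapiro's isomorphism, to the corestriction $H_1(\cH, \Z) \to H_1(\cG', \Z)$. This is a standard fact: the Shapiro isomorphism is the composite
\[
H_1(\cH, \Z) \xrightarrow{\mathrm{cor}} H_1(\cG', \Z[\cG'] \otimes_{\Z[\cH]} \Z),
\]
induced by the $\cH$-map $\Z \to \Z[\cG'/\cH]$, $1 \mapsto 1\cdot\cH$, together with the inclusion $\cH \hookrightarrow \cG'$. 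Composing this $\cH$-map with augmentation gives the identity on $\Z$, so the composite is the corestriction $H_1(\cH,\Z) \to H_1(\cG',\Z)$. Finally, in degree one the corestriction is identified, via the standard isomorphism $H_1(-,\Z) \simeq (-)_{\ab}$, with the map $\cH_{\ab} \to \cG'_{\ab}$ induced by inclusion, by naturality of that isomorphism in the group.

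The main obstacle is bookkeeping rather than mathematics. One must make sure the Shapiro isomorphism is normalized as the corestriction composite above, rather than through some other description, so that the identification is canonical given the choice of the $\sigma_j$. This is the place to check carefully. One could alternatively verify it directly on the bar resolution, or via $\Z[\cG'] \otimes_{\Z[\cG']} P$ for a projective resolution $P$ of $\Z$ over $\Z[\cG']$, which is also projective over $\Z[\cH]$. With that normalization fixed, the diagram commutes summand by summand.
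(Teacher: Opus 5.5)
Your proposal is correct and follows the paper's proof. You decompose $\Z[\cG/\cI]$ via Lemma \ref{lem:dcoset}, note that the composite with augmentation is the augmentation on each summand, and apply Shapiro's lemma summand by summand. Your description of the Shapiro isomorphism as the coefficient map followed by corestriction supplies the justification the paper leaves implicit in the phrase ``the standard ones.''
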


\begin{proof}
We first note that the homomorphism
\[
\bigoplus_{j=1}^s \Z[\cG'/(\cI^{\sigma_j} \cap \cG')] 
\simeq \Z[\cG/\cI]
\to \Z,
\]
where the first isomorphism is constructed in Lemma \ref{lem:dcoset}, coincides with the augmentation map.
Then we obtain a commutative diagram
\[
\xymatrix{
\bigoplus_{j=1}^s H_1(\cG', \Z[\cG'/(\cI^{\sigma_j} \cap \cG')]) \ar[r]^-{\simeq} \ar[d]_{\simeq}
& H_1(\cG', \Z[\cG/\cI]) \ar[r] & H_1(\cG', \Z) \ar[dd]^{\simeq} \\
\bigoplus_{j=1}^s H_1(\cI^{\sigma_j} \cap \cG', \Z) \ar[d]_{\simeq} & &\\
\bigoplus_{j=1}^s (\cI^{\sigma_j} \cap \cG')_{\ab} \ar[rr] && \cG'_{\ab},
}
\]
where the vertical arrows are the standard ones.
This shows the lemma.
\end{proof}

\begin{eg}
Let $\cG = S_3$ be the symmetric group on the set $\{1, 2, 3\}$.
Let $\cG' = \cI = \lrang{(1 \, 2)}$.
In this case, we have $s = 2$ and may take $\sigma_1 = \id, \sigma_2 = (1 \, 3) \in S_3$ as a complete set of representatives of $\dcoset{\lrang{(1 \, 2)}}{S_3}{\lrang{(1 \, 2)}}$.
Then $\cI^{\sigma_1} \cap \cG' = \lrang{(1 \, 2)}, \cI^{\sigma_2} \cap \cG' = \{ \id\}$ and the consequence of Lemma \ref{lem:dcoset2} is a commutative diagram
\[
\xymatrix{
H_1(\cG', \Z[\cG/\cI]) \ar[r] \ar[d]_{\simeq} & H_1(\cG', \Z) \ar[d]^{\simeq} \\
\lrang{(1 \, 2)} \oplus \{\id\} \ar[r] & \lrang{(1 \, 2)}.
}
\]
\end{eg}

We are now ready to prove the main theorem.

\begin{proof}[Proof of Theorem \ref{thm:main_ram}]
Let $G' = \Gal(L/K')$ be as in the theorem and set $\cG' = \Gal(F/K')$.
The exact sequence
\[
0 \to \cB^{S, F}(L) \to \bigoplus_{v \in S} \Z[\cG/\cI_v] \to \Z \to 0
\]
obtained by the definition of $\cB^{S, F}(L)$ induces an exact sequence
\[
\bigoplus_{v \in S} H_1(\cG', \Z[\cG/\cI_v]) \to H_1(\cG', \Z)
\to B^S(L)_{G'} \to \bigoplus_{v \in S} \Z[\dcoset{G'}{G}{I_v}] \to \Z \to 0.
\]
It remains to show that the cokernel of the first homomorphism between the first homology groups is isomorphic to $\Cl(K')$.
For each $v \in S$, we take a complete set of representatives $\sigma_{v, 1}, \dots, \sigma_{v, s_v} \in \cG$ of $\dcoset{\cG'}{\cG}{\cI_v}$.
Then Lemma \ref{lem:dcoset2} gives us a commutative diagram
\[
\xymatrix{
\bigoplus_{v \in S} H_1(\cG', \Z[\cG/\cI_v]) \ar[r] \ar[d]_{\simeq} & H_1(\cG', \Z) \ar[d]^{\simeq} \\
\bigoplus_{v \in S} \bigoplus_{j=1}^{s_v} (\cI_v^{\sigma_{v, j}} \cap \cG')_{\ab} \ar[r] & \cG'_{\ab}.
}
\]
Here, the left vertical arrow depends on the choice of $\sigma_{v, j}$, while the other maps are the standard ones.

We notice that $\cI_v^{\sigma_{v, j}} \cap \cG' = \sigma_{v, j} \cI_v \sigma_{v, j}^{-1} \cap \cG'$ coincides with the inertia group in $\cG'$ of the place $\sigma_{v, j}(u_v)$ of $F$.
Since $\sigma_{v, 1}, \dots, \sigma_{v, s_v}$ is a set of representatives of $\dcoset{\cG'}{\cG}{\cI_v}$, for any place $v'$ of $K'$ lying above $v$, there exists $j \in \{1, \dots, s_v\}$ such that $v'$ lies below $\sigma_{v, j}(u_v)$.
It follows that the images of $(\cI_v^{\sigma_{v, j}} \cap \cG')_{\ab} \to \cG'_{\ab}$ for $j \in \{1, \dots, s_v\}$ are precisely the inertia groups of the places of $K'$ lying above $v$, possibly with repetitions.

Consequently, the cokernel of the lower map in the diagram coincides with
\[
\Gal(H(K')/K') \simeq \Cl(K')
\]
by conditions (a) and (b).
This completes the proof of Theorem \ref{thm:main_ram}.
\end{proof}

Let us record the dependency on the set $S$.

\begin{lem}\label{lem:dep_S}
In Definition \ref{defn:B_S}, let $S'$ be a finite set of places of $K$ such that $S' \supset S$.
Then we have a split exact sequence
\[
0 \to B^S(L) \to B^{S'}(L) \to \bigoplus_{v \in S' \setminus S} \Z[G] \to 0.
\]
\end{lem}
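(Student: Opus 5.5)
We work with one auxiliary field $F$ that serves for $S$ and $S'$ at once. We take $F$ satisfying (a)--(c) for $S$. Since $F/L$ is unramified outside $S$, it is also unramified outside $S'$, so $F$ satisfies (a)--(c) for $S'$ as well. Moreover $L/K$ is unramified outside $S$, so for $v \in S' \setminus S$ the inertia groups $\cI_v \subset \cG$ and $I_v \subset G$ are trivial: $F/K$ is unramified at $v$. By the independence of $F$ noted after Definition \ref{defn:B_S}, it therefore suffices to compare $\cB^{S,F}(L)$ and $\cB^{S',F}(L)$ and then take $\Gal(F/L)$-coinvariants.

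At the level of $\cG$-modules, the inclusion $\bigoplus_{v\in S}\Z[\cG/\cI_v] \hookrightarrow \bigoplus_{v\in S'}\Z[\cG/\cI_v]$ is compatible with the augmentation maps to $\Z$. Hence it restricts to an injection $\cB^{S,F}(L) \hookrightarrow \cB^{S',F}(L)$. By the snake lemma, the cokernel of this injection is identified with the cokernel of the inclusion of the direct sums, which is $\bigoplus_{v\in S'\setminus S}\Z[\cG]$. Explicitly, the projection of $\cB^{S',F}(L)$ onto the $S'\setminus S$ components is surjective, because $S\neq\emptyset$: given any $x$ in the $S'\setminus S$ part, we correct its augmentation by a multiple of a coset $1\cdot\cI_{v_0}$ for some $v_0 \in S$. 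The kernel of this projection is exactly $\cB^{S,F}(L)$.

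This gives a short exact sequence $0\to\cB^{S,F}(L)\to\cB^{S',F}(L)\to\bigoplus_{v\in S'\setminus S}\Z[\cG]\to0$. The right-hand term is a free $\Z[\cG]$-module, so the sequence splits as $\Z[\cG]$-modules. A splitting can be written down concretely: fix $v_0\in S$ and send the basis element $e_v$ to $e_v - [\cI_{v_0}]_{v_0}$. Applying the additive functor $\Z[G]\otimes_{\Z[\cG]}(-)$ to a split exact sequence gives a split exact sequence, and $\Z[G]\otimes_{\Z[\cG]}\Z[\cG]=\Z[G]$. This yields the claimed split sequence $0\to B^S(L)\to B^{S'}(L)\to\bigoplus_{v\in S'\setminus S}\Z[G]\to 0$.

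The only point needing care is that a single $F$ is admissible for both $S$ and $S'$, and that $\cI_v=1$ for $v\notin S$. Both follow directly from the hypotheses, so no real obstacle is expected. If one prefers not to invoke independence of $F$, the same argument works for any $F$ admissible for $S$.
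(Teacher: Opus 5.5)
Your proposal is correct and is essentially the paper's proof. Both arguments use a single auxiliary field $F$ for $S$ and $S'$, compare the two defining augmentation sequences, and use $\cI_v = 1$ for $v \in S' \setminus S$ to obtain $0 \to \cB^{S,F}(L) \to \cB^{S',F}(L) \to \bigoplus_{v \in S' \setminus S} \Z[\cG] \to 0$. This sequence splits because the cokernel is free, and one then takes $\Gal(F/L)$-coinvariants; your explicit section $e_v \mapsto e_v - [\cI_{v_0}]_{v_0}$ is a concrete touch the paper leaves implicit.
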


\begin{proof}
We may use the same auxiliary field $F$ for the construction of $B^{S'}(L)$ as that for $B^S(L)$.
By their definitions, we have a commutative diagram
\[
\xymatrix{
0 \ar[r] & \cB^{S, F}(L) \ar[r] \ar@{^(->}[d] & \bigoplus_{v \in S} \Z[\cG/\cI_v] \ar[r] \ar@{^(->}[d] & \Z \ar[r] \ar@{=}[d]& 0\\
0 \ar[r] & \cB^{S', F}(L) \ar[r] & \bigoplus_{v \in S'} \Z[\cG/\cI_v] \ar[r] & \Z \ar[r] & 0,
}
\]
which induces an exact sequence
\[
0 \to \cB^{S, F}(L) \to \cB^{S', F}(L) \to \bigoplus_{v \in S' \setminus S} \Z[\cG] \to 0
\]
since $\cI_v$ is trivial for $v \in S' \setminus S$.
This sequence of left $\Z[\cG]$-modules splits, so we obtain the lemma by taking $(-)_{\Gal(F/L)}$.
\end{proof}

The analogue of Lemma \ref{lem:funct} is valid as well.
For completeness, we record the statements.

\begin{lem}\label{lem:funct_ram}
Let $K'' \supset K'$ be intermediate fields of $L/K$ and set $G'' = \Gal(L/K'') \subset G' = \Gal(L/K')$.
The following hold.
\begin{itemize}
\item[(1)]
We have a commutative diagram
\[
\xymatrix{
0 \ar[r] & \Cl(K'') \ar[r] \ar[d] & B^S(L)_{G''} \ar[r] \ar@{->>}[d] & \bigoplus_{v \in S} \Z[\dcoset{G''}{G}{I_v}] \ar[r] \ar@{->>}[d] & \Z \ar[r] \ar@{=}[d] & 0\\
0 \ar[r] & \Cl(K') \ar[r] & B^S(L)_{G'} \ar[r] & \bigoplus_{v \in S} \Z[\dcoset{G'}{G}{I_v}] \ar[r] & \Z \ar[r] & 0,
}
\]
where the left vertical arrow is the norm map and the other vertical arrows are the natural ones.
\item[(2)]
We have a commutative diagram
\[
\xymatrix{
0 \ar[r] & \Cl(K'') \ar[r] & B^S(L)_{G''} \ar[r] & \bigoplus_{v \in S} \Z[\dcoset{G''}{G}{I_v}] \ar[r] & \Z \ar[r] & 0\\
0 \ar[r] & \Cl(K') \ar[r] \ar[u]& B^S(L)_{G'} \ar[r] \ar[u] & \bigoplus_{v \in S} \Z[\dcoset{G'}{G}{I_v}] \ar[r] \ar@{^(->}[u] & \Z \ar[r] \ar@{^(->}[u] & 0,
}
\]
where the left vertical arrow is the capitulation map and the other vertical arrows are induced by a norm element of $G'/G''$; in particular, the right-most map is the multiplication by $[G': G'']$.
\end{itemize}
\end{lem}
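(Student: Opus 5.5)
The plan is to repeat the argument of Lemma \ref{lem:funct}, now with the homology sequence from the proof of Theorem \ref{thm:main_ram}. Take the same auxiliary field $F$ for both fields. Set $\cG'' = \Gal(F/K'') \subset \cG' = \Gal(F/K')$. For $K'$ the sequence of Theorem \ref{thm:main_ram} comes from applying $H_\bullet(\cG', -)$ to
\[
0 \to \cB^{S,F}(L) \to \bigoplus_{v \in S} \Z[\cG/\cI_v] \to \Z \to 0.
\]
This gives a five-term sequence whose leftmost term $H_1(\cG',\Z)/\mathrm{Im}$ is identified with $\Cl(K')$. The same holds for $\cG''$.

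For (1), the inclusion $\cG'' \hookrightarrow \cG'$ induces corestriction maps $H_\bullet(\cG'',-) \to H_\bullet(\cG',-)$. These are natural in the coefficient module, so they give a morphism between the two long exact sequences. On $H_0$ this is the natural projection. It gives $B^S(L)_{G''} \twoheadrightarrow B^S(L)_{G'}$ and $\Z[\dcoset{\cG''}{\cG}{\cI_v}] \twoheadrightarrow \Z[\dcoset{\cG'}{\cG}{\cI_v}]$, and the latter is identified with $\Z[\dcoset{G''}{G}{I_v}] \to \Z[\dcoset{G'}{G}{I_v}]$. On $H_1(-,\Z)$ it is $\cG''_{\ab} \to \cG'_{\ab}$. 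Under the reciprocity isomorphism $\Cl(K') \simeq \Gal(H(K')/K')$, which is a quotient of $\cG'_{\ab}$, this map corresponds to the norm map, exactly as in the first square in the proof of Lemma \ref{lem:funct}. Since this map on $H_1(-,\Z)$ is compatible with the maps from $H_1(-,\bigoplus\Z[\cG/\cI_v])$, it descends to the cokernels. This gives the left square.

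For (2), use the transfer (restriction) maps $H_\bullet(\cG',-) \to H_\bullet(\cG'',-)$. These are defined because $\cG''$ has finite index in $\cG'$, and they are natural in the coefficients, so again we get a morphism of long exact sequences. On $H_0$, restriction $M_{\cG'} \to M_{\cG''}$ is $x \mapsto \sum_{\tau \in \cG''\backslash\cG'} \tau x$, the norm element of $\cG'/\cG''$. The index $\Gal(F/L)$ acts trivially once we pass to coinvariants, so this matches the norm element of $G'/G''$ on $B^S(L)$ and on the permutation modules. On $\Z$ it is multiplication by $[\cG':\cG''] = [G':G'']$. On $H_1(-,\Z)$ it is the group-theoretic transfer $\cG'_{\ab} \to \cG''_{\ab}$. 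By the second square in the proof of Lemma \ref{lem:funct}, the transfer corresponds to the capitulation map; this is the Artin transfer compatibility. It still holds with $F/K'$ ramified, since it holds on the quotients $\Gal(H(K')/K')$ and $\Gal(H(K'')/K'')$ through the full abelianizations.

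The main obstacle is checking that the maps on $H_1(-,\Z)$ really descend compatibly with the identification of cokernels as $\Cl$, which uses the double coset decomposition. Naturality of corestriction and transfer in the coefficient module already makes the square with $\bigoplus_v H_1(-,\Z[\cG/\cI_v])$ commute. So the induced map on cokernels is well defined regardless of the non-canonical isomorphism of Lemma \ref{lem:dcoset2}. That isomorphism is used only to identify the image with the subgroup generated by inertia groups. It therefore remains to verify one thing: the reciprocity map sends these maps on the quotients $\Gal(H(K'')/K'') \to \Gal(H(K')/K')$ and back to the norm and capitulation maps. This is the classical functoriality of Artin reciprocity.
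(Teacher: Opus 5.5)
Your proposal is correct and is essentially the paper's argument: the paper proves this lemma in the same way as Lemma \ref{lem:funct}. It compares the homology sequences of $0 \to \cB^{S,F}(L) \to \bigoplus_{v \in S} \Z[\cG/\cI_v] \to \Z \to 0$ via the map induced by $\Gal(F/K'') \hookrightarrow \Gal(F/K')$ (resp.\ the transfer) and invokes the compatibility of the reciprocity isomorphism with these maps on $H_1(-,\Z)$. Your extra checks (descent to the cokernels identified with $\Cl$, and the identification of the $H_0$-maps as projection and norm element) only make explicit what the paper leaves implicit; one phrase to sharpen is that corestriction and transfer give morphisms of long exact sequences because they are morphisms of $\delta$-functors, i.e.\ they commute with the connecting homomorphisms, not merely because they are natural in the coefficients.
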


\subsection{Consequences}\label{ss:ram_app}

\subsubsection{The norm map}

An analogue of Proposition \ref{prop:norm_unr} is obtained if $S$ is a singleton:

\begin{prop}\label{prop:norm_ram}
For a finite Galois extension $L/K$ that is unramified outside a singleton $S = \{v\}$ of a place of $K$, setting $G = \Gal(L/K)$, we have an exact sequence
\[
H_1(G, \Delta(G/I_v)) \to \Cl(L)_G \to \Cl(K) \to \Delta(G/I_v)_G \to 0.
\]
Here, $\Delta(G/I_v)$ denotes the kernel of the map $\Z[G/I_v] \to \Z$.
\end{prop}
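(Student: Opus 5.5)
We mimic the proof of Proposition \ref{prop:norm_unr}, replacing \eqref{eq:fund_unr} by \eqref{eq:fund_ram}. With $S = \{v\}$, the sequence \eqref{eq:fund_ram} reads
\[
0 \to \Cl(L) \to B^S(L) \to \Z[G/I_v] \to \Z \to 0,
\]
so the image of $B^S(L) \to \Z[G/I_v]$ is exactly $\Delta(G/I_v)$. This gives a short exact sequence of left $\Z[G]$-modules
\[
0 \to \Cl(L) \to B^S(L) \to \Delta(G/I_v) \to 0.
\]
Applying $H_{\bullet}(G, -)$ produces the exact sequence
\[
H_1(G, \Delta(G/I_v)) \to \Cl(L)_G \to B^S(L)_G \to \Delta(G/I_v)_G \to 0.
\]

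It remains to identify $B^S(L)_G$ with $\Cl(K)$ compatibly with the norm map. Since $S$ is a singleton, \eqref{eq:fund_ram2} becomes $0 \to \Cl(K) \to B^S(L)_G \to \Z \to \Z \to 0$. The last map $\Z \to \Z$ is the augmentation on a single copy, so it is the identity. Hence $\Cl(K) \simeq B^S(L)_G$. By Lemma \ref{lem:funct_ram}(1) with $K'' = L$ and $K' = K$, the map $\Cl(L) \to B^S(L) \to B^S(L)_G$ corresponds, under this isomorphism, to the norm map $\Cl(L) \to \Cl(K)$. It follows that the map $\Cl(L)_G \to B^S(L)_G \simeq \Cl(K)$ in the long exact sequence is the norm map, as required.

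Finally, we must identify the map $\Cl(K) \simeq B^S(L)_G \to \Delta(G/I_v)_G$ as the one in the statement. Since the statement does not name this map, the map induced by the construction suffices, and the sequence is exact as claimed.

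The only point needing care is the compatibility check with the norm map. Everything else is formal, and that check is supplied directly by Lemma \ref{lem:funct_ram}(1), so no real obstacle is expected.
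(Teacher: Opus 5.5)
Your proposal is correct and is the paper's argument: the paper's proof simply cites \eqref{eq:fund_ram}, \eqref{eq:fund_ram2}, and Lemma \ref{lem:funct_ram}(1). You fill this in exactly as intended, taking $G$-homology of $0 \to \Cl(L) \to B^S(L) \to \Delta(G/I_v) \to 0$, identifying $B^S(L)_G \simeq \Cl(K)$ because the augmentation on a single copy of $\Z$ is the identity, and checking that the resulting map $\Cl(L)_G \to \Cl(K)$ is the norm.
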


\begin{proof}
This follows from \eqref{eq:fund_ram}, \eqref{eq:fund_ram2}, and Lemma \ref{lem:funct_ram}(1).
\end{proof}

\begin{eg}\label{eg:sin_ram_cyc}
Suppose moreover that $G$ is a cyclic group.
Then we have $H_2(G, \Z) = 0$ and the map $H_1(G, \Z[G/I_v]) \to H_1(G, \Z)$ is identified with the inclusion map $I_v \hookrightarrow G$.
We thus conclude
\[
0 \to \Cl(L)_G \to \Cl(K) \to G/I_v \to 0.
\]
We also obtain
\[
\# \Cl(L)^G 
= \# \Cl(L)_G
=\# \Cl(K) \cdot \frac{\# I_v}{\# G}.
\]
This is again a very special case of Chevalley's ambiguous class number formula.
\end{eg}

\subsubsection{The capitulation map}

The following corresponds to Proposition \ref{prop:unr_cap}, although a complete description of the cokernel is missing.

\begin{prop}\label{prop:ram_cap}
For a finite Galois extension $L/K$ that is unramified outside a finite nonempty set $S$ of places of $K$, 
we have an exact sequence
\[
0 \to \hat{H}^{-1}(G, B^S(L)) \to \Cl(K) \to \Cl(L)^G \to \hat{H}^0(G, B^S(L)).
\]
\end{prop}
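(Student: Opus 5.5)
We mimic the proof of Proposition \ref{prop:unr_cap}, using Lemma \ref{lem:funct_ram}(2) with $K'' = L$ and $K' = K$ (so $G'' = 1$, $G' = G$). This lemma gives a commutative diagram whose left vertical arrow is the capitulation map $\Cl(K) \to \Cl(L)$ and whose middle vertical arrow $B^S(L)_G \to B^S(L)$ is induced by the norm element $N_G$ of $G$. Since $N_G$ kills $\Delta G \cdot B^S(L)$ and its image is $G$-invariant, this map factors as $B^S(L)_G \to B^S(L)^G$. By definition, its kernel is $\hat{H}^{-1}(G, B^S(L))$ and its cokernel is $\hat{H}^0(G, B^S(L))$.

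Next we compare $\Cl(K)$ with $B^S(L)_G$ and $\Cl(L)^G$ with $B^S(L)^G$. From \eqref{eq:fund_ram2}, $\Cl(K)$ injects into $B^S(L)_G$ with cokernel $C := \Ker(\bigoplus_{v\in S}\Z \to \Z)$, which is $\Z$-free. From \eqref{eq:fund_ram}, applying $(-)^G$ (left exact) to $0 \to \Cl(L) \to B^S(L) \to D \to 0$ with $D = \Ker(\bigoplus_{v} \Z[G/I_v] \to \Z)$ gives $0 \to \Cl(L)^G \to B^S(L)^G \to D^G$. The norm maps $\Z[G/I_v] \to \Z[G/I_v]$ induced by $N_G$ send $1 \mapsto \#I_v \cdot \sum_{\bar g \in G/I_v} \bar g$, and this is injective on $\bigoplus_v \Z$. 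Hence the induced map $C \to D^G$ is injective, because $C \subset \bigoplus_v \Z$ and the map is compatible with the inclusion of $D$ into $\bigoplus_v \Z[G/I_v]$.

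Now a snake-lemma diagram chase finishes the argument. Since $C \to D^G$ is injective, every element of $\Ker(B^S(L)_G \to B^S(L)^G)$ maps to $0$ in $C$, hence lies in $\Cl(K)$. So $\hat{H}^{-1}(G, B^S(L)) = \Ker(\Cl(K) \to \Cl(L)^G)$; this gives exactness at the left and at $\Cl(K)$. For exactness at $\Cl(L)^G$, take $x \in \Cl(L)^G$ whose image in $\hat{H}^0$ vanishes. Then $x = N_G y$ for some $y \in B^S(L)_G$, and the image $\bar y \in C$ maps to $0$ in $D^G$, since $x \in \Cl(L)^G$ dies in $D^G$. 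By injectivity $\bar y = 0$, so $y \in \Cl(K)$, and commutativity shows $x$ is the capitulation image of $y$.

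The main obstacle is the injectivity of $C \to D^G$, i.e., correctly identifying the right-hand vertical maps of Lemma \ref{lem:funct_ram}(2) as multiplication by $\#I_v$ times the orbit sum. Unlike the unramified case, $D^G \to$ (something) is not injective, so surjectivity onto $\hat H^0$ is lost. This is exactly why the sequence stops at $\hat{H}^0(G, B^S(L))$ without a final $\to 0$.
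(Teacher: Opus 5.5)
Your argument is correct and is essentially the paper's proof. Both apply the snake lemma to the diagram of Lemma \ref{lem:funct_ram}(2) with $G''=1$, $G'=G$, and use that the norm-induced map on the right-hand terms is injective; you replace $\bigoplus_{v\in S}\Z$ and $\bigoplus_{v\in S}\Z[G/I_v]^G$ by $C$ and $D^G$, which only makes the snake lemma apply literally.

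Your closing heuristic is imprecise, though. The real point is that $D^G$ is nonzero as soon as $\# S \geq 2$, and $C \to D^G$ need not be surjective. So one only obtains an injection of $\Coker(\Cl(K)\to\Cl(L)^G)$ into $\hat{H}^0(G, B^S(L))$.
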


\begin{proof}
This is proved by applying the snake lemma to the commutative diagram
\[
\xymatrix{
0 \ar[r] & \Cl(L)^G \ar[r] & B^S(L)^G \ar[r] & \bigoplus_{v \in S} \Z[G/I_v]^G\\
0 \ar[r] & \Cl(K) \ar[r] \ar[u] & B^S(L)_G \ar[u] \ar[r] & \bigoplus_{v \in S} \Z \ar@{^(->}[u]
}
\]
obtained by Lemma \ref{lem:funct_ram}(2).
\end{proof}

It is unclear whether this proposition is useful for studying capitulation kernels in the spirit of Suzuki's theorem in Remark \ref{rem:Suzuki}.

\subsection{Analogues for infinite extensions}\label{ss:ram_inf}

The same argument as in Subsection \ref{ss:ram_pf} applies to infinite extensions $L/K$ as well.
In contrast to the unramified case, there even exist abelian infinite extensions that are unramified outside a finite set $S$ of places of $K$; for instance, any $\Z_p^d$-extension has this property.

\begin{defn}\label{defn:Y_S(L)}
Let $K$ be a number field and $S$ a nonempty finite set of places of $K$.
Let $L/K$ be a (possibly infinite) Galois extension unramified outside $S$ and set $G = \Gal(L/K)$.
We take an auxiliary (possibly infinite) extension $F/L$ satisfying the following conditions:
\begin{itemize}
\item[(a)]
$F\supset H(L)$,
\item[(b)]
$F/L$ is unramified outside $S$, and
\item[(c)]
$F/K$ is a Galois extension.
\end{itemize}
For example, the minimal choice is $F = H(L)$.
Alternatively, the maximal choice is $F = K_S$, the maximal $S$-ramified (not necessarily abelian) extension of $K$.
We then set $\cG = \Gal(F/K)$ (by condition (c)).

For each $v \in S$, we take a place $u_v$ of $F$ lying above $v$, and write $w_v$ for the place of $L$ lying below $u_v$.
We then write $\cI_v \subset \cG$ and $I_v \subset G$ for the inertia subgroup of $u_v$ and $w_v$, respectively.
Note that the natural map $\cG \twoheadrightarrow G$ induces $\cI_v \twoheadrightarrow I_v$.

We now define a left $\hatZ[[\cG]]$-module $\cY^{S, F}(L)$ as the kernel of the augmentation map
\[
\bigoplus_{v \in S} \hatZ[[\cG/\cI_v]] \to \hatZ,
\]
which is surjective since $S \neq \emptyset$.
We finally define
\[
Y^S(L) = (\cY^{S, F}(L))_{\Gal(F/L)}
= \hatZ[[G]] \otimes_{\hatZ[[\cG]]} \cY^{S, F}(L),
\]
which is a left $\hatZ[[G]]$-module.
This $Y^S(L)$ is independent of the choice of $F$, up to isomorphism (this follows in the same way as Remark \ref{rem:B_unr_indep}).
\end{defn}

This module $Y^S(L)$ satisfies the following.

\begin{thm}\label{thm:main_ram_inf}
For any intermediate field $K'$ of $L/K$ with $G' = \Gal(L/K')$, we have an exact sequence
\[
0 \to X(K') \to Y^S(L)_{G'} \to \bigoplus_{v \in S} \hatZ[[\dcoset{G'}{G}{I_v}]] \to \hatZ \to 0.
\]
\end{thm}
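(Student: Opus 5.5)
The plan is to repeat the proof of Theorem \ref{thm:main_ram} in the profinite setting. First I will check that all functors involved behave well for profinite modules. Set $\cG' = \Gal(F/K')$, a closed subgroup of $\cG$. The defining sequence
\[
0 \to \cY^{S, F}(L) \to \bigoplus_{v \in S} \hatZ[[\cG/\cI_v]] \to \hatZ \to 0
\]
is an exact sequence of profinite $\hatZ[[\cG]]$-modules. I will apply $\hatZ \otimes_{\hatZ[[\cG']]} (-)$, i.e.\ profinite $\cG'$-homology, to it. The result is a long exact sequence
\[
\bigoplus_{v \in S} H_1(\cG', \hatZ[[\cG/\cI_v]]) \to H_1(\cG', \hatZ) \to (\cY^{S,F}(L))_{\cG'} \to \bigoplus_{v \in S} \hatZ[[\dcoset{\cG'}{\cG}{\cI_v}]] \to \hatZ \to 0.
\]
To identify the terms:
\begin{itemize}
\item $(\cY^{S,F}(L))_{\cG'} \simeq (Y^S(L))_{G'}$, since coinvariants are taken in stages via $\Gal(F/L) \subset \cG'$.
\item $\dcoset{\cG'}{\cG}{\cI_v} = \dcoset{G'}{G}{I_v}$, because $\Gal(F/L) \subset \cG'$ and $\cI_v \twoheadrightarrow I_v$.
\item $H_1(\cG', \hatZ) \simeq (\cG')^{\ab}$, the maximal abelian (Hausdorff) quotient.
\end{itemize}
I will derive the long exact sequence by writing everything as an inverse limit over open normal subgroups $U \subset \cG$ (finite quotients $\cG/U$, i.e.\ finite subextensions of $F/K$) of the finite-level sequences, using exactness of $\varprojlim$ on profinite (compact) modules.

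The heart of the argument is an infinite analogue of Lemmas \ref{lem:dcoset} and \ref{lem:dcoset2}. The double coset space $\dcoset{\cG'}{\cG}{\cI_v}$ is now profinite, possibly infinite, so a finite set of representatives is unavailable. I will therefore work at finite level and pass to the limit:
\begin{itemize}
\item For $U$ open normal in $\cG$, the group $\cG'U/U$ is the Galois group of $F^U$ over $K'F^U \cap \cdots$. The finite-level Lemma \ref{lem:dcoset2}, together with the argument in the proof of Theorem \ref{thm:main_ram}, shows that the image of $\bigoplus_v H_1(\cG'U/U, \Z[\cG/\cI_vU])$ in $(\cG'U/U)^{\ab}$ is the subgroup generated by the inertia groups of all places above $S$.
\item Compatibility in $U$ is built into the construction: these images are the images under the natural maps, so no choice of representatives enters the comparison.
\item Taking $\varprojlim_U$ preserves images of maps between compact modules. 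Hence the image of the first map is the closed subgroup of $(\cG')^{\ab}$ generated by the inertia groups of all places of $K'$ above $S$.
\end{itemize}

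By conditions (a) and (b), the quotient of $(\cG')^{\ab}$ by this closed subgroup is $\Gal(H(K')/K')$. Condition (a) ensures $F \supset H(L) \supset H(K')$, where $H(K')$ denotes the maximal unramified abelian extension, which may be infinite when $K'$ is. Condition (b) ensures that all ramification of $F/K'$ occurs above $S$. Finally, $\Gal(H(K')/K') \simeq X(K')$: for $K'$ a number field this is class field theory after completing $\Cl(K')$, and in general it follows by taking limits over finite subextensions.

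I expect the main obstacle to be the limit argument for the inertia image: placing the finite-level identifications into compatible inverse systems and checking that closures and images commute with $\varprojlim$. Compactness, and the fact that all modules are profinite, should make this routine. I would also check that the resulting $\varprojlim_U \Z[\cG'U\backslash \cG/\cI_vU]$ (after $\hatZ$-completion) agrees with the module $\hatZ[[\dcoset{G'}{G}{I_v}]]$ as defined by completed tensor products.
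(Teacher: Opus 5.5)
Your proposal is viable, but it takes a different route from the paper's. The paper first treats finite $K'/K$. There $\cG' = \Gal(F/K')$ is open in the possibly infinite profinite group $\cG$, so $\dcoset{\cG'}{\cG}{\cI_v}$ is finite. Lemma~\ref{lem:dcoset} and Shapiro's lemma then apply verbatim to $\hatZ[[\cG/\cI_v]]$. The image in $(\cG')^{\ab}$ is generated by the inertia groups of the finitely many places of $K'$ above $S$, so the cokernel is $\Gal(H(K')/K') \simeq \Cl(K')$. The general case follows by taking $\varprojlim$ of these exact sequences over the finite subextensions $K''$ of $K'/K$, with transition maps as in Lemma~\ref{lem:funct_ram}(1). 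This limit is $X(K') = \varprojlim_{K''} \Cl(K'')$ by definition, so no inertia computation over an infinite field is ever needed. You instead keep $K'$ fixed and take the limit over the finite quotients $\cG/U$.

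That ordering costs you exactly the step you call routine. At level $U$ the group is $\cG'U/U = \Gal(F^U/K'\cap F^U)$, not the field you wrote. The subgroup $T_U$ coming from Lemma~\ref{lem:dcoset2} is generated by the images of $\sigma\cI_v\sigma^{-1}\cap\cG'U$. These are inertia groups over the finite field $K'\cap F^U$, not over $K'$. In general they are strictly larger than the images of $\sigma\cI_v\sigma^{-1}\cap\cG'$, for instance when $K'/(K'\cap F^U)$ absorbs part of the ramification at that place.

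So ``images commute with $\varprojlim$'' only shows that the image of the first map is $\varprojlim_U T_U$. To identify this with the closed subgroup generated by the inertia groups over $K'$, you must show the following. For fixed $U$, the intersection over $U' \subset U$ of the images of $T_{U'}$ in level $U$ must equal the image of the subgroup generated by the $\sigma\cI_v\sigma^{-1}\cap\cG'$.

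This follows from $\bigcap_{U'}\cG'U' = \cG'$ together with a compactness argument on tuples $(\sigma_1,\dots,\sigma_N,g_1,\dots,g_N)\in\cG^{2N}$. The tuples are needed because infinitely many conjugates $\sigma$ occur, corresponding to possibly infinitely many places of $K'$ above $v$. With that argument supplied, and with $X(K')\simeq\Gal(H(K')/K')$ for infinite $K'$ (asserted in Subsection~\ref{ss:unr_inf}), your proof goes through. The paper's order of limits avoids both of these points.
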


\begin{proof}
When $K'/K$ is a finite extension, the set $\dcoset{G'}{G}{\cI_v}$ is finite for each $v \in S$ and the same proof as in Theorem \ref{thm:main_ram} is valid.
Then the general case follows by taking the projective limit.
\end{proof}

By using this module $Y^S(L)$, we deduce two known propositions concerning the relation between $X(L)$ and $X(K')$ for the intermediate fields $K'$ of $L/K$.
The first one requires a restrictive assumption.

\begin{prop}\label{prop:ICNF_11}
Suppose that $S = \{v\}$ is a singleton and $v$ is totally ramified in $L/K$.
Then $X(L)_{G'} \simeq X(K')$ for any intermediate field $K'$ of $L/K$ with $G' = \Gal(L/K')$.
\end{prop}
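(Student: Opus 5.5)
The plan is to use Theorem \ref{thm:main_ram_inf} twice: once with $K' = L$ and once with the given $K'$. Then take $G'$-coinvariants of the first sequence and compare it with the second.

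Since $v$ is totally ramified in $L/K$, we have $I_v = G$. Hence $\hatZ[[\dcoset{G'}{G}{I_v}]] = \hatZ$ for every closed subgroup $G'$, and the augmentation map $\hatZ \to \hatZ$ is the identity. So Theorem \ref{thm:main_ram_inf} for $S = \{v\}$ gives $X(K') \simeq Y^S(L)_{G'}$ for every intermediate field $K'$. In particular, for $K' = L$ it gives $X(L) \simeq Y^S(L)$.

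It remains to check that these isomorphisms are compatible with coinvariants, i.e.\ that the isomorphism $X(L)_{G'} \simeq Y^S(L)_{G'}$ obtained by taking $G'$-coinvariants of $X(L) \simeq Y^S(L)$ agrees with the one from the theorem for $K'$. For this I would use the infinite analogue of Lemma \ref{lem:funct_ram}(1). That lemma says the sequences for $L$ and for $K'$ fit into a commutative diagram whose left vertical arrow is the norm map $X(L) \to X(K')$ (the natural projection of inverse limits) and whose second vertical arrow is the natural surjection $Y^S(L) \to Y^S(L)_{G'}$. In the finite case this was shown by the same argument as Lemma \ref{lem:funct}, and for infinite $L/K$ it follows by passing to the projective limit over finite layers, as in the proof of Theorem \ref{thm:main_ram_inf}.

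With this diagram in hand, the composite
\[
X(L)_{G'} \simeq (Y^S(L))_{G'} = Y^S(L)_{G'} \simeq X(K')
\]
is the map induced by the norm, and it is an isomorphism, which is the claim.

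The main obstacle is making sure the identification $X(L)\simeq Y^S(L)$ really is $G'$-equivariant, so that taking coinvariants is legitimate. Equivalently, the map $X(L) \to Y^S(L)$ coming from the connecting map $H_1(\Gal(F/L),\hatZ) \to (\cY^{S,F}(L))_{\Gal(F/L)}$ must be $G$-equivariant for the conjugation action on $\Gal(F/L)^{\ab}$. This holds because the connecting homomorphism is natural and $\cG$ acts compatibly on the defining short exact sequence. Strictly speaking, even without tracking the precise maps, abstract isomorphisms of $\hatZ[[G]]$-modules already give the stated isomorphism $X(L)_{G'} \simeq X(K')$.
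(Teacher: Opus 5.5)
Your proof is correct and follows the paper's argument: since $I_v = G$, Theorem~\ref{thm:main_ram_inf} gives $X(L) \simeq Y^S(L)$ and $X(K') \simeq Y^S(L)_{G'}$, and taking $G'$-coinvariants of the first isomorphism yields the claim. Your extra checks are points the paper leaves implicit: the $G$-equivariance of $X(L) \simeq Y^S(L)$, and the compatibility with the norm map, which is in the spirit of the paper's alternative remark that the result also follows from the projective limit of Proposition~\ref{prop:norm_ram}.
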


\begin{proof}
Under the assumption, we have $X(L) \simeq Y^S(L)$ and $X(K') \simeq Y^S(L)_{G'}$ by the property in Theorem \ref{thm:main_ram_inf}.
Then the proposition follows immediately.
Alternatively, this can be deduced by taking the projective limit of Proposition \ref{prop:norm_ram}.
\end{proof}

The second one is useful in Iwasawa theory (see Proposition \ref{prop:ICNF_1}).

\begin{prop}\label{prop:ICNF_12}
Let $K'$ be an intermediate field of $L/K$ with $G' = \Gal(L/K')$.
Suppose that $L/K'$ is totally ramified at all places lying above a place in $S$.
Then the map $X(L) \to X(K')$ is surjective and we have
\[
\Ker(X(L) \to X(K')) \simeq \Ker(Y^S(L) \to Y^S(L)_{G'}) = (\widehat{\Delta} G')Y^S(L),
\]
where $\widehat{\Delta} G'$ denotes the augmentation ideal of $\hatZ[[G']]$.
\end{prop}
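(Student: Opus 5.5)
Apply Theorem \ref{thm:main_ram_inf} twice, with $K' = L$ (so $G' = 1$) and with the given $K'$. This gives a commutative diagram with exact rows, by the profinite analogue of Lemma \ref{lem:funct_ram}(1):
\[
\xymatrix{
0 \ar[r] & X(L) \ar[r] \ar[d] & Y^S(L) \ar[r] \ar@{->>}[d] & \bigoplus_{v \in S} \hatZ[[G/I_v]] \ar[r] \ar[d] & \hatZ \ar[r] \ar@{=}[d] & 0\\
0 \ar[r] & X(K') \ar[r] & Y^S(L)_{G'} \ar[r] & \bigoplus_{v \in S} \hatZ[[\dcoset{G'}{G}{I_v}]] \ar[r] & \hatZ \ar[r] & 0.
}
\]
Here the left vertical arrow is the norm (restriction) map and the others are the natural projections. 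The key observation is that the third vertical arrow is an isomorphism under the hypothesis. Indeed, $L/K'$ being totally ramified at every place above $v \in S$ means that for each $\sigma \in G$, the inertia group $\sigma I_v \sigma^{-1}$ of the place $\sigma(w_v)$ contains $G'$. Equivalently, $\sigma^{-1} G' \sigma \subset I_v$ for all $\sigma$, so $G' \sigma I_v = \sigma I_v$. Hence the natural map $G/I_v \to \dcoset{G'}{G}{I_v}$ is a bijection, and $G'$ acts trivially on $\hatZ[[G/I_v]]$. For infinite $G$ I would run this on finite quotients and pass to the limit.

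Next I would split each row into two short exact sequences through the common image. Write $Z$ for the kernel of $\bigoplus_v \hatZ[[G/I_v]] \to \hatZ$, which is identified with the corresponding kernel in the bottom row. Then the rows become $0 \to X(L) \to Y^S(L) \to Z \to 0$ and $0 \to X(K') \to Y^S(L)_{G'} \to Z \to 0$, with identity on $Z$. The snake lemma now gives $\Coker(X(L) \to X(K')) \simeq \Coker(Y^S(L) \to Y^S(L)_{G'}) = 0$, which is the surjectivity. It also gives $\Ker(X(L) \to X(K')) \simeq \Ker(Y^S(L) \to Y^S(L)_{G'})$, since the map on $Z$ is an isomorphism and so contributes nothing.

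Finally, the kernel of $M \to M_{G'}$ is $(\hatDelta G') M$ by definition of coinvariants, at least for compact $M$. For profinite $M$ one needs $(\hatDelta G')M$ to be closed, which holds since it is the image of a compact module under a continuous map. Alternatively, one can take the coinvariants as the completed ones throughout, as the paper implicitly does.

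The main obstacle is verifying the commutativity of the diagram, i.e.\ that the left vertical map induced by the construction really is the norm map. I would take this as the profinite analogue of Lemma \ref{lem:funct_ram}(1), obtained by passing to the limit of finite layers. The double coset identification is routine but must be stated carefully.
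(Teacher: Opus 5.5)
Your proposal is correct and takes the same approach as the paper. The paper also derives the statement from the diagram of Lemma \ref{lem:funct_ram}(1) comparing the sequences for $L$ and $K'$, with the third vertical arrow an isomorphism by the total ramification hypothesis; you have made explicit the steps the paper leaves implicit, namely $\sigma^{-1}G'\sigma \subset I_v$ for all $\sigma \in G$ and the snake lemma argument.
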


\begin{proof}
This follows from the diagram (see Lemma \ref{lem:funct_ram}(1))
\[
\xymatrix{
0 \ar[r] & X(L) \ar[r] \ar@{->>}[d] & Y^S(L) \ar[r] \ar@{->>}[d] & \bigoplus_{v \in S} \hatZ[[G/I_v]] \ar[r] \ar[d]^{\simeq} &
\hatZ \ar[r] \ar@{=}[d] &
0 \\
0 \ar[r] &
X(K') \ar[r] &
Y^S(L)_{G'} \ar[r] & \bigoplus_{v \in S} \hatZ[[\dcoset{G'}{G}{I_v}]] \ar[r] & \hatZ \ar[r] &
0,
}
\]
where the isomorphism follows from the assumption on the ramification.
\end{proof}

We now deduce Theorem \ref{thm:main_ram_Iw}.

\begin{proof}[Proof of Theorem \ref{thm:main_ram_Iw}]
We define $Y_{K_{\infty}} = \Z_p \otimes_{\hatZ} Y^S(K_{\infty})$.
Then \eqref{eq:Iw_inf} and \eqref{eq:Iw_fin} are valid.
To check that $Y_{K_{\infty}}$ is finitely generated as a $\Z_p[[\Gamma]]$-module, we only have to see that $(Y_{K_{\infty}})_{\Gamma}$ is finitely generated over $\Z_p$ by \eqref{eq:Iw_fin} and then apply the topological version of Nakayama's lemma.

To check that $Y_{K_{\infty}}$ is torsion requires more argument.
Recall that $I_v$ is non-trivial for any $v \in S$ by the choice of $S$.
Therefore, by \eqref{eq:Iw_inf}, the claim is reduced to the well-known fact that $X_{K_{\infty}}$ is torsion.
This fact is a part of Theorem \ref{thm:intro_duality} and independently reproved in Subsection \ref{ss:pf_final}.
\end{proof}

\section{Variants}\label{sec:variants}

In this short section, we discuss analogues of the results in Sections \ref{sec:unr} and \ref{sec:ram} for the variants of class groups.
We formulate the theorems only for infinite extensions with coefficients in $\hatZ$.
Those can be stated for finite extensions with coefficients in $\Z$ as well, as long as the variants of class groups concerned are assumed to be finite.

Let $K$ be a number field.
Let $\Sigma$ and $\Sigma'$ be finite sets of places of $K$ such that $\Sigma \cap \Sigma' = \emptyset$.
For each algebraic extension $L/K$, we consider $X_{\Sigma}^{\Sigma'}(L) = \Gal(H_{\Sigma}^{\Sigma'}(L)/L)$, where $H_{\Sigma}^{\Sigma'}(L)/L$ denotes the maximal abelian extension of $L$ that is $\Sigma$-ramified (i.e., unramified outside $\Sigma$) and $\Sigma'$-split (i.e., the primes above $\Sigma'$ split completely).
When $\Sigma$ or $\Sigma'$ is the empty set, we omit it from the notation: We have $X_{\Sigma}(L) = X_{\Sigma}^{\emptyset}(L)$, $X^{\Sigma'}(L) = X_{\emptyset}^{\Sigma'}(L)$, and $X(L) = X_{\emptyset}^{\emptyset}(L)$ is compatible with the earlier definition.

The following generalizes Theorem \ref{thm:main_unr_inf} (where $\Sigma = \Sigma' = \emptyset$).

\begin{thm}\label{thm:ray_cl}
Let $L/K$ be a $\Sigma$-ramified and $\Sigma'$-split Galois extension and set $G = \Gal(L/K)$.
Then there is a profinite $\hatZ[[G]]$-module $Y_{\Sigma}^{\Sigma'}(L)$ such that, for any intermediate field $K'$ of $L/K$ with $G' = \Gal(L/K')$, we have an exact sequence
\[
0 \to X_{\Sigma}^{\Sigma'}(K') \to Y_{\Sigma}^{\Sigma'}(L)_{G'} \to \hatZ[[G' \backslash G]] \to \hatZ \to 0.
\]
\end{thm}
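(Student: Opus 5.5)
Following the pattern of Theorem \ref{thm:main_unr_inf}, take an auxiliary extension $F/L$ such that (a) $F \supset H_{\Sigma}^{\Sigma'}(L)$, (b) $F/L$ is $\Sigma$-ramified and $\Sigma'$-split, and (c) $F/K$ is Galois. For instance $F = H_{\Sigma}^{\Sigma'}(L)$ works: it is Galois over $K$ by maximality, since $L/K$ is Galois and the conditions are stable under conjugation. Put $\cG = \Gal(F/K)$ and define
\[
Y_{\Sigma}^{\Sigma'}(L) = (\hatDelta \cG)_{\Gal(F/L)} = \hatZ[[G]] \otimes_{\hatZ[[\cG]]} \hatDelta \cG.
\]
Independence of $F$ follows from the five-lemma argument of Remark \ref{rem:B_unr_indep}, once the leftmost terms have been identified with $X_{\Sigma}^{\Sigma'}(L)$.

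For an intermediate field $K'$ with $\cG' = \Gal(F/K')$, apply $\hatZ \otimes_{\hatZ[[\cG']]} (-)$ to $0 \to \hatDelta\cG \to \hatZ[[\cG]] \to \hatZ \to 0$. Since $\hatZ[[\cG]]$ is free (in the profinite sense) over $\hatZ[[\cG']]$, its first homology vanishes, and we obtain
\[
0 \to H_1(\cG', \hatZ) \to (\hatDelta\cG)_{\cG'} \to \hatZ[[\cG'\backslash \cG]] \to \hatZ \to 0.
\]
We have $(\hatDelta\cG)_{\cG'} = (Y_{\Sigma}^{\Sigma'}(L))_{G'}$ and $\cG'\backslash\cG = G'\backslash G$. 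For infinite $K'/K$, I would first treat finite $K'$ and then pass to the projective limit, using that profinite homology commutes with inverse limits, as in Theorem \ref{thm:main_ram_inf}.

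The main step is to identify $H_1(\cG', \hatZ) = (\cG')^{\ab}$ with $\Gal(H_{\Sigma}^{\Sigma'}(K')/K') = X_{\Sigma}^{\Sigma'}(K')$. The maximal abelian quotient of $\cG'$ corresponds to the maximal abelian subextension of $F/K'$, so two facts are needed.

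First, $F/K'$ is $\Sigma$-ramified and $\Sigma'$-split. This holds because both $L/K'$ (as a subextension of $L/K$) and $F/L$ have these properties, and both properties are transitive in towers.

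Second, $F \supset H_{\Sigma}^{\Sigma'}(K')$. The compositum $L\,H_{\Sigma}^{\Sigma'}(K')$ is an abelian extension of $L$ that is $\Sigma$-ramified and $\Sigma'$-split, hence it lies in $H_{\Sigma}^{\Sigma'}(L) \subset F$.

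Consequently the maximal abelian subextension of $F/K'$ is exactly $H_{\Sigma}^{\Sigma'}(K')$. Substituting into the sequence above gives the claimed exact sequence.

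I expect the only real point needing care to be the $\Sigma'$-split condition. Unlike ramification, splitting is not detected by inertia groups, so the double-coset analysis of Theorem \ref{thm:main_ram} does not enter; no decomposition subgroups need to be quotiented out, because every place above $\Sigma'$ already has trivial decomposition group in $\cG$. I would also check the transitivity of total splitting in towers and the profinite freeness used in the homology vanishing.
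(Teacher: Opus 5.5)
Your proposal is correct and matches the paper's proof: the paper also takes $F = H_{\Sigma}^{\Sigma'}(L)$, sets $Y_{\Sigma}^{\Sigma'}(L) = (\hatDelta \cG)_{\Gal(F/L)}$, and reruns the argument of Theorem \ref{thm:main_unr_inf} with ``unramified'' replaced by ``$\Sigma$-ramified and $\Sigma'$-split.'' Your identification of the maximal abelian subextension of $F/K'$ with $H_{\Sigma}^{\Sigma'}(K')$ is the step the paper leaves implicit, and it works directly for any closed $\cG'$, so the detour through finite $K'$ and a projective limit is unnecessary.
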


\begin{proof}
The same proof as Theorem \ref{thm:main_unr_inf} applies; we only have to replace ``unramified'' by ``$\Sigma$-ramified and $\Sigma'$-split.''
The auxiliary field $F$ can be $F = H_{\Sigma}^{\Sigma'}(L)$.
\end{proof}

For example, the resulting sequence
\[
0 \to \Z_p \otimes_{\hatZ} X_{\Sigma}(L) \to \Z_p \otimes_{\hatZ} Y_{\Sigma}(L) \to \Z_p[[G]] \to \Z_p \to 0
\]
(with $\Sigma' = \emptyset$) is nothing but the Tate sequence, which plays an important role in equivariant Iwasawa theory (see Ritter--Weiss \cite{RW02} etc.).

The following generalizes Theorem \ref{thm:main_ram_inf} (where $\Sigma = \Sigma' = \emptyset$).

\begin{thm}\label{thm:ray_cl_ram}
Let $S$ be a finite set of places of $K$ such that $S \not \subset \Sigma$.
Let $L/K$ be an $(S \cup \Sigma)$-ramified and $(\Sigma' \setminus S)$-split Galois extension and set $G = \Gal(L/K)$.
Then there is a profinite $\hatZ[[G]]$-module $Y_{\Sigma}^{\Sigma', S}(L)$ such that, for any intermediate field $K'$ of $L/K$ with $G' = \Gal(L/K')$, we have an exact sequence
\[
0 \to X_{\Sigma}^{\Sigma'}(K') \to Y_{\Sigma}^{\Sigma', S}(L)_{G'} 
\to \bigoplus_{v \in S \setminus (\Sigma \cup \Sigma')} \hatZ[[G' \backslash G / I_v]] \oplus \bigoplus_{v \in S \cap \Sigma'} \hatZ[[G' \backslash G / D_v]]
\to \hatZ \to 0,
\]
where $D_v$ denotes the decomposition group.
\end{thm}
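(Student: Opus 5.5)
We follow Definition \ref{defn:Y_S(L)} and the proof of Theorem \ref{thm:main_ram_inf}, replacing the inertia groups by suitable subgroups. Take the auxiliary field $F = H_{\Sigma}^{\Sigma'}(L)$ (or any $F \supset H_{\Sigma}^{\Sigma'}(L)$ that is Galois over $K$ and $(S \cup \Sigma)$-ramified and $(\Sigma'\setminus S)$-split over $L$), and put $\cG = \Gal(F/K)$. For each $v \in S$ choose a place $u_v$ of $F$ above $v$, and define a closed subgroup $\cI^*_v \subset \cG$ as follows:
\begin{itemize}
\item $\cI^*_v = \cI_v$ (inertia) if $v \in S \setminus (\Sigma \cup \Sigma')$;
\item $\cI^*_v = \cD_v$ (decomposition) if $v \in S \cap \Sigma'$;
\item $\cI^*_v = 1$ if $v \in S \cap \Sigma$.
\end{itemize}
Let $\cY$ be the kernel of the augmentation $\bigoplus_{v\in S}\hatZ[[\cG/\cI^*_v]]\to\hatZ$. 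It is surjective since $S\neq\emptyset$. Set $Y_{\Sigma}^{\Sigma',S}(L)=\cY_{\Gal(F/L)}$.

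The summands with $v\in S\cap\Sigma$ are free, so they do not appear in the final sequence. This is why the statement only has summands for $v \notin \Sigma$. To make this precise, we arrange that these free summands get absorbed. Concretely, we remove the $v\in S\cap \Sigma$ from the direct sum, provided some $v\in S\setminus\Sigma$ remains, which holds by $S\not\subset\Sigma$. So we simply define $\cY$ using only $v \in S\setminus\Sigma$, with $\cI_v^*$ as above. This is exactly the shape of the target sequence, and it explains the hypothesis $S\not\subset\Sigma$: the augmentation must stay surjective.

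For finite $K'/K$ with $\cG'=\Gal(F/K')$, taking $\cG'$-homology of $0\to\cY\to\bigoplus\hatZ[[\cG/\cI^*_v]]\to\hatZ\to0$ gives
\[
\bigoplus_{v} H_1(\cG',\hatZ[[\cG/\cI^*_v]])\to H_1(\cG',\hatZ)\to Y_{G'}\to\bigoplus_v\hatZ[[\dcoset{\cG'}{\cG}{\cI^*_v}]]\to\hatZ\to0.
\]
The double cosets agree with those of $G$, because $\cI^*_v$ surjects onto $I_v$ resp.\ $D_v$. By Lemma \ref{lem:dcoset2} (in its profinite form, obtained by passing to finite quotients), the first map is identified with $\bigoplus_{v,j}(\cI^{*\sigma_{v,j}}_v\cap\cG')_{\ab}\to\cG'_{\ab}$. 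Its image is the subgroup generated by the inertia groups (for $v\in S\setminus(\Sigma\cup\Sigma')$) and the decomposition groups (for $v\in S\cap\Sigma'$) in $\cG'^{\ab}$ of all places of $K'$ above such $v$.

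The key step is then the class field theoretic identification of this cokernel with $X_\Sigma^{\Sigma'}(K')$. The maximal abelian quotient $\cG'^{\ab}$ is $\Gal(F'/K')$, where $F'$ is the maximal abelian subextension of $F/K'$. Quotienting by these inertia and decomposition groups cuts out the maximal subextension that is unramified at $S\setminus(\Sigma\cup\Sigma')$ and split at $S\cap\Sigma'$.

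We must check that this subextension equals $H_\Sigma^{\Sigma'}(K')$. Two facts are needed:
\begin{itemize}
\item $F\supset H_\Sigma^{\Sigma'}(L)\supset H_\Sigma^{\Sigma'}(K')$, so the target extension lies inside $F'$.
\item $F/K'$ is already unramified outside $S\cup\Sigma$ and split at $\Sigma'\setminus S$, since both $L/K$ and $F/L$ are. So outside $S$ no further conditions are needed.
\end{itemize}

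This gives the exact sequence for finite $K'$. Infinite $K'$ follow by passing to the projective limit, using that everything consists of profinite modules, for which the limit is exact. Independence of $F$ follows as in Remark \ref{rem:B_unr_indep}.

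The main obstacle is the bookkeeping in this last identification. At places of $S\cap\Sigma'$ we need $F/L$ split at $\Sigma'\setminus S$ only, and the decomposition-group quotient must impose the splitting at $S\cap\Sigma'$ correctly for every place of $K'$ above $v$. This uses the double coset enumeration from the proof of Theorem \ref{thm:main_ram}.
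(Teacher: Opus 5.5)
Your proposal is correct, and once you drop the $v\in S\cap\Sigma$ summands it is exactly the paper's construction: $F=H_{\Sigma}^{\Sigma'}(L)$, $\cY$ the kernel of the augmentation on $\bigoplus_{v\in S\setminus(\Sigma\cup\Sigma')}\hatZ[[\cG/\cI_v]]\oplus\bigoplus_{v\in S\cap\Sigma'}\hatZ[[\cG/\mathcal{D}_v]]$ (surjective because $S\not\subset\Sigma$), and $Y_{\Sigma}^{\Sigma',S}(L)=\cY_{\Gal(F/L)}$; note that those summands would not ``disappear'' but would add free terms $\hatZ[[G'\backslash G]]$, which is why omitting them is necessary. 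The paper only says that the proof of Theorem \ref{thm:main_ram_inf} carries over, and your double-coset description of the image in $\cG'_{\ab}$, together with your two-inclusion class field theory check that the cokernel is $X_{\Sigma}^{\Sigma'}(K')$, is precisely the verification it omits.
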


\begin{proof}
The same proof as Theorem \ref{thm:main_ram_inf} applies.
We take an auxiliary field $F$, which can be $F = H_{\Sigma}^{\Sigma'}(L)$, and set $\mathcal{G} = \Gal(F/K)$.
For each $v \in S$, choose a place of $F$ above $v$, and let
$\mathcal{I}_v$ and $\mathcal{D}_v$ denote its inertia and decomposition
subgroups in $\mathcal{G}$, respectively.
Define $\cY_{\Sigma}^{\Sigma', S, F}(L)$ as the kernel of the map
\[
\bigoplus_{v \in S \setminus (\Sigma \cup \Sigma')} \hatZ[[\mathcal{G}/\mathcal{I}_v]] \oplus \bigoplus_{v \in S \cap \Sigma'} \hatZ[[\mathcal{G}/\mathcal{D}_v]]\to \hatZ,
\]
which is surjective since $S \not\subset \Sigma$.
We then set $Y_{\Sigma}^{\Sigma', S}(L) = \cY_{\Sigma}^{\Sigma', S, F}(L)_{\Gal(F/L)}$.
\end{proof}

\begin{proof}[Proof of Theorem \ref{thm:main_ram_Iw_sp}]
Exactly in the same way as Theorem \ref{thm:main_ram_Iw} follows from Theorem \ref{thm:main_ram_inf},
the theorem follows from Theorem \ref{thm:ray_cl_ram} applied to the case where $\Sigma = \emptyset$ and $\Sigma'$ is the set of the $p$-adic primes.
\end{proof}

\section{Iwasawa's asymptotic class number formula}\label{sec:app_Iw}

In this section, after algebraic preliminaries in Subsections \ref{ss:pre_Iw} and \ref{ss:ICNF_pf}, we deduce (a refined version of) Theorem \ref{thm:intro_ICNF} from Theorem \ref{thm:main_ram_Iw} in Subsection \ref{ss:ICNF_pf2}.

\subsection{Preliminaries on the Iwasawa algebra}\label{ss:pre_Iw}

We begin with an algebraic consideration.
In this and the next subsections, let $\Gamma$ be a profinite group isomorphic to $\Z_p$ and set $\Lambda = \Z_p[[\Gamma]]$.
Taking a topological generator $\gamma \in \Gamma$, we have an isomorphism
\[
\Lambda \simeq \Z_p[[T]],
\quad
\gamma \leftrightarrow 1 + T,
\]
which is called Serre's isomorphism.
This implies that $\Lambda$ is a complete regular local ring of dimension $2$.
It also follows that $\Lambda$ is a UFD, so any prime ideal of height one is principal.
The prime ideals of $\Lambda$ are classified according to their heights:
\begin{itemize}
\item
There is a unique prime ideal of height zero, namely, $(0)$.
\item
The prime ideals of height one are $(p)$ and $(f)$, where $f \in \Lambda \setminus p\Lambda$ is a prime element.
\item
There is a unique prime ideal of height two, namely, the maximal ideal that corresponds to $(p, T) \subset \Z_p[[T]]$.
\end{itemize}

\begin{defn}
Let $M$ be a finitely generated torsion $\Lambda$-module.
We define its characteristic ideal by
\[
\cha(M) = \prod_{\fp} \fp^{\length_{\Lambda_{\fp}}(M_{\fp})},
\]
where $\fp$ runs over the height one prime ideals of $\Lambda$.
Here, $\Lambda_{\fp}$ and $M_{\fp}$ denote the localizations.
\end{defn}

\begin{defn}
We define the $\lambda$- and $\mu$-invariants for elements of $\Lambda$ and modules over $\Lambda$ as follows.
\begin{itemize}
\item
Let $f \in \Lambda$ be a nonzero element.
We define $\mu(f) \geq 0$ as the largest integer such that $f \in p^{\mu(f)} \Lambda$.
We then define $\lambda(f) \geq 0$ as the largest integer such that $\ol{p^{-\mu(f)} f} \in T^{\lambda(f)} \F_p[[T]]$, where $\ol{(-)}$ denotes the reduction modulo $p$.
\item
Let $M$ be a finitely generated torsion $\Lambda$-module.
We define $\mu(M), \lambda(M) \in \N$ as the $\mu$- and $\lambda$-invariants of any generator of the characteristic ideal $\cha(M)$.
\end{itemize}
\end{defn}

It is clear that $\lambda, \mu: \Lambda \setminus\{0\} \to \N$ are both monoid homomorphisms.
For an exact sequence $0 \to M' \to M \to M'' \to 0$ of finitely generated torsion $\Lambda$-modules, we have
\[
\cha(M) = \cha(M') \cha(M''),
\]
that is, the characteristic ideals are additive with respect to exact sequences.
It then follows that $\lambda(M) = \lambda(M') + \lambda(M'')$ and $\mu(M) = \mu(M') + \mu(M'')$ as well.

\subsection{An abstract version of Iwasawa's asymptotic formula}\label{ss:ICNF_pf}

One of our techniques is the use of the following $\rho_{\alpha}(M)$, which behaves well in homological arguments.

\begin{defn}
Let $\alpha \in \Lambda$ be a nonzero element.
For each finitely generated $\Lambda$-module $M$, we write $M[\alpha]$ and $M/\alpha$ for the kernel and cokernel of the multiplication by $\alpha$ map on $M$.
If $M[\alpha]$ and $M/\alpha$ are both finite, we set
\[
\rho_{\alpha}(M) = \ord_p(\#(M/\alpha)) - \ord_p(\#(M[\alpha])) \in \Z.
\]
\end{defn}

Concerning this definition, it is convenient to have the following in mind.

\begin{lem}\label{lem:rho_wd}
Let $\alpha \in \Lambda$ be a nonzero element and $M$ a finitely generated $\Lambda$-module.
If $M/\alpha$ is finite, then $M[\alpha]$ is also finite.
\end{lem}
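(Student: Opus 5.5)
We must show: if $M/\alpha$ is finite then $M[\alpha]$ is finite, where $\Lambda \simeq \Z_p[[T]]$, $\alpha \neq 0$ and $M$ is finitely generated.

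The plan is to reduce to the torsion-free rank and then to a statement about finitely generated $\Lambda$-modules annihilated by $\alpha$. First, $M/\alpha$ is finite, hence pseudo-null, so $(M/\alpha)_{\fp} = 0$ for every height one prime $\fp$; in particular at $\fp = (0)$ this gives $M_{(0)} = \alpha M_{(0)}$. That alone says nothing, since $\alpha$ is a unit in the fraction field. We will therefore argue differently.

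Consider the submodule $N = M[\alpha]$, which is finitely generated because $\Lambda$ is Noetherian, and annihilated by $\alpha$. Since $\alpha \neq 0$, $N$ is torsion. By the topological Nakayama lemma, a finitely generated $\Lambda$-module is finite if and only if its support is contained in the maximal ideal, i.e. $N_{\fp} = 0$ for all height one primes $\fp$ (and at $(0)$, automatic for torsion $N$). So fix a height one prime $\fp$ and localize. The ring $\Lambda_{\fp}$ is a DVR, $M_{\fp}$ is a finitely generated $\Lambda_{\fp}$-module, and $(M/\alpha)_{\fp} = M_{\fp}/\alpha M_{\fp} = 0$ because $M/\alpha$ is finite. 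If $\alpha \notin \fp$ then $\alpha$ is a unit in $\Lambda_{\fp}$, so $N_{\fp} = M_{\fp}[\alpha] = 0$. If $\alpha \in \fp$, then $\alpha$ lies in the maximal ideal of the local ring $\Lambda_{\fp}$, and Nakayama's lemma applied to $M_{\fp} = \alpha M_{\fp}$ gives $M_{\fp} = 0$, hence $N_{\fp} = 0$ since localization is exact.

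Thus $N$ is supported only at the maximal ideal. A finitely generated module over the two-dimensional complete local ring $\Lambda$ supported at the maximal ideal has finite length, and each simple subquotient is $\Lambda/(p,T) \simeq \F_p$. Hence $N = M[\alpha]$ is finite.

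The only mildly delicate point is the final step identifying finite-length $\Lambda$-modules with finite modules, which uses finiteness of the residue field $\F_p$. This is standard, so there is no real obstacle.
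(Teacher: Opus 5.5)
Your proof is correct and follows essentially the paper's route: both localize at height-one primes $\fp$, use finiteness of $M/\alpha$ to get $\alpha M_{\fp} = M_{\fp}$, conclude that multiplication by $\alpha$ is injective on $M_{\fp}$, and then apply the criterion that a finitely generated $\Lambda$-module is finite exactly when it vanishes at all height-one primes. The only difference is that where the paper says a surjective multiplication-by-$\alpha$ on $M_{\fp}$ is automatically an isomorphism, you split into the cases $\alpha \notin \fp$ (unit) and $\alpha \in \fp$ (Nakayama gives $M_{\fp}=0$), and you also prove the finite-length-implies-finite step that the paper takes as known.
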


\begin{proof}
A finitely generated $\Lambda$-module $N$ is finite if and only if $N_{\fp} = 0$ for any height-one prime ideal $\fp$.
Therefore, the assumption implies that the multiplication by $\alpha$ map on $M_{\fp}$ is surjective.
It then follows that the multiplication by $\alpha$ map on $M_{\fp}$ is moreover an isomorphism, and we obtain the conclusion.
\end{proof}

We record some basic properties of $\rho_{\alpha}(M)$.

\begin{lem}\label{lem:rho_add}
Let $\alpha \in \Lambda$ be a nonzero element and $M$ a finitely generated $\Lambda$-module.
The following hold.
\begin{itemize}
\item[(1)]
Let $\alpha = \alpha' \alpha''$ in $\Lambda$.
Then $\rho_{\alpha}(M)$ is defined if and only if both $\rho_{\alpha'}(M)$ and $\rho_{\alpha''}(M)$ are defined.
In this case, we have $\rho_{\alpha}(M) = \rho_{\alpha'}(M) + \rho_{\alpha''}(M)$.
\item[(2)]
Let $0 \to M' \to M \to M'' \to 0$ be an exact sequence of $\Lambda$-modules.
Then $\rho_{\alpha}(M)$ is defined if and only if both $\rho_{\alpha}(M')$ and $\rho_{\alpha}(M'')$ are defined.
In this case, we have $\rho_{\alpha}(M) = \rho_{\alpha}(M') + \rho_{\alpha}(M'')$.
\item[(3)]
If $M$ is finite, then $\rho_{\alpha}(M) = 0$.
\end{itemize}
\end{lem}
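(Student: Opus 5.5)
The plan is to prove the three parts of Lemma \ref{lem:rho_add} in the order (3), (2), (1). Part (3) is essentially formal, (2) comes from the snake lemma, and (1) is deduced from (2) by a factorization argument.

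For (3), assume $M$ is finite. Multiplication by $\alpha$ is an endomorphism of a finite abelian group, so we have an exact sequence
\[
0 \to M[\alpha] \to M \xrightarrow{\alpha} M \to M/\alpha \to 0 .
\]
Both $M[\alpha]$ and $M/\alpha$ are finite, and counting orders gives $\#M[\alpha] = \#(M/\alpha)$. Hence $\rho_\alpha(M) = 0$.

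For (2), apply the snake lemma to multiplication by $\alpha$ on the short exact sequence $0\to M'\to M\to M''\to 0$. This gives a six-term exact sequence
\[
0 \to M'[\alpha]\to M[\alpha]\to M''[\alpha]\to M'/\alpha\to M/\alpha\to M''/\alpha\to 0 .
\]
If $\rho_\alpha(M')$ and $\rho_\alpha(M'')$ are defined, then the outer terms are finite, so the middle terms are finite as well. The alternating sum of $\ord_p$ of the orders vanishes, which is exactly the additivity formula. For the converse, suppose $\rho_\alpha(M)$ is defined. The sequence shows that $M''/\alpha$ is a quotient of $M/\alpha$ and hence finite. Lemma \ref{lem:rho_wd} then makes $M''[\alpha]$ finite. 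Next, $M'/\alpha$ sits between the finite group $M''[\alpha]$ and the finite group $M/\alpha$, so it is finite. Finally $M'[\alpha]\subset M[\alpha]$ is finite. This is the only point that is not pure bookkeeping, and it is exactly where Lemma \ref{lem:rho_wd} is needed.

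For (1), consider the exact sequence $0 \to \alpha''M \to M \to M/\alpha'' \to 0$ together with the surjection $M\to \alpha''M$, whose kernel is $M[\alpha'']$. These give
\[
0\to M[\alpha'']\to M[\alpha]\to (\alpha''M)[\alpha'] \to 0 \quad\text{and}\quad 0\to \alpha''M/\alpha M\to M/\alpha\to M/\alpha''\to 0 .
\]
A cleaner route is to apply the snake lemma to the composite $\alpha=\alpha'\alpha''$ of endomorphisms of $M$. This yields the standard exact sequence
\[
0\to M[\alpha'']\to M[\alpha]\to M[\alpha']\to M/\alpha''\to M/\alpha\to M/\alpha'\to 0 ,
\]
where we use that multiplications commute, since $\Lambda$ is commutative.

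If $\rho_{\alpha'}(M)$ and $\rho_{\alpha''}(M)$ are defined, the outer terms are finite, hence so are the middle ones. Taking the alternating sum of $\ord_p$ gives $\rho_\alpha=\rho_{\alpha'}+\rho_{\alpha''}$. Conversely, if $M/\alpha$ is finite, then $M/\alpha'$ is finite as a quotient of it. By Lemma \ref{lem:rho_wd}, $M[\alpha']$ is then finite. Now $M/\alpha''$ is trapped between the finite groups $M[\alpha']$ and $M/\alpha$ in the sequence, so it is finite. Applying Lemma \ref{lem:rho_wd} once more shows that $M[\alpha'']$ is finite.

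The main care needed is in verifying exactness of this composite-map snake sequence. I would cite it as the standard kernel–cokernel sequence for a composition $f\circ g$, here with $g=\alpha''$ and $f=\alpha'$.
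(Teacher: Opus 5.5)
Your proposal is correct and follows the paper's proof essentially verbatim. Part (3) uses $0 \to M[\alpha] \to M \to M \to M/\alpha \to 0$, part (2) uses the snake lemma together with Lemma \ref{lem:rho_wd} applied to $M''$, and part (1) uses the kernel--cokernel sequence of the composite; the paper composes in the other order, which is immaterial since $\Lambda$ is commutative. Two cosmetic points: your second appeal to Lemma \ref{lem:rho_wd} in (1) is unnecessary because $M[\alpha''] \subset M[\alpha]$, and the abandoned first route via $\alpha'' M$ can simply be deleted.
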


\begin{proof}
(1)
This follows from the kernel-cokernel exact sequence
\[
0 \to M[\alpha'] \to M[\alpha] \overset{\times \alpha'}{\to} M[\alpha''] \to M/\alpha' \overset{\times \alpha''}{\to} M/\alpha \to M/\alpha'' \to 0.
\]
We also use Lemma \ref{lem:rho_wd} to see that $\rho_{\alpha''}(M)$ is defined if $\rho_{\alpha}(M)$ is defined.

(2)
This follows from the snake lemma:
\[
0 \to M'[\alpha] \to M[\alpha] \to M''[\alpha] \to M'/\alpha \to M/\alpha \to M''/\alpha \to 0.
\]
We also use Lemma \ref{lem:rho_wd} to see that $\rho_{\alpha}(M'')$ is defined if $\rho_{\alpha}(M)$ is defined.

(3)
This follows from the exact sequence
\[
0 \to M[\alpha] \to M \overset{\alpha \times}{\to} M \to M/\alpha \to 0.
\]
\end{proof}

For each $n \geq 0$, we set
\[
\omega_n = \gamma^{p^n}-1 \in \Z[\Gamma].
\]
For $n \geq 1$, we then set
\[
\varphi_n = \frac{\omega_n}{\omega_{n-1}}
= 1 + \gamma^{p^{n-1}} + \gamma^{2p^{n-1}} + \dots + \gamma^{(p-1)p^{n-1}} \in \Z[\Gamma].
\]

Another important idea in this paper is to use prime filtrations to derive the following proposition.

\begin{prop}\label{prop:ICNF_4}
Let $M$ be a finitely generated torsion $\Lambda$-module and $n \geq 1$.
Suppose that
\[
p^{n-1}(p-1) > \lambda(f)
\]
holds for any prime divisor $f$ of a generator of the characteristic ideal of $M$.
Then we have
\[
\rho_{\varphi_n}(M)
= \lambda(M) + \mu(M) (p^n - p^{n-1}).
\]
\end{prop}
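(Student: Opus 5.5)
We take a prime filtration of $M$ and reduce, via the additivity in Lemma \ref{lem:rho_add}, to the cyclic modules $\Lambda/\fp$.

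Since $\Lambda$ is noetherian, $M$ admits a filtration $0 = M_0 \subset M_1 \subset \dots \subset M_r = M$ with $M_i/M_{i-1} \simeq \Lambda/\fp_i$ for prime ideals $\fp_i$. As $M$ is torsion, no $\fp_i$ is $(0)$. Each $\fp_i$ is therefore either the maximal ideal or a height-one prime. For a height-one prime $\fp$, the number of indices with $\fp_i = \fp$ equals $\length_{\Lambda_\fp}(M_\fp)$. Consequently $\cha(M)$ is the product of the height-one $\fp_i$, and every height-one $\fp_i$ is generated by a prime divisor $f$ of a generator of $\cha(M)$.

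By Lemma \ref{lem:rho_add}(2), it suffices to show two things. First, $\rho_{\varphi_n}(\Lambda/\fp_i)$ is defined for each $i$. Second, it equals $\lambda(f) + \mu(f)(p^n - p^{n-1})$ when $\fp_i = (f)$ has height one, and equals $0$ when $\fp_i$ is maximal. Summing over $i$ then gives $\lambda(M) + \mu(M)(p^n-p^{n-1})$, because $\lambda$ and $\mu$ are additive and the height-one $\fp_i$ multiply to $\cha(M)$. The maximal case is immediate: $\Lambda/\fp_i = \F_p$ is finite, so Lemma \ref{lem:rho_add}(3) gives $0$.

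It remains to treat $N = \Lambda/(f)$ with $f$ a prime element.

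\emph{Case $f = p$.} Here $N = \F_p[[T]]$, and $\varphi_n$ reduces modulo $p$ to $((1+T)^{p^n}-1)/((1+T)^{p^{n-1}}-1) = T^{p^n - p^{n-1}}$. Multiplication by this element on $\F_p[[T]]$ is injective with cokernel of order $p^{p^n-p^{n-1}}$. Hence $\rho = p^n - p^{n-1} = \mu(p)(p^n-p^{n-1})$, as required since $\lambda(p) = 0$.

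\emph{Case $f \notin p\Lambda$.} By Weierstrass preparation, $f = u \cdot g$ with $u$ a unit and $g$ a distinguished polynomial of degree $\lambda(f)$. Then $N = \Z_p[T]/(g) \simeq \Z_p^{\lambda(f)}$ is free over $\Z_p$. The element $\varphi_n$ acts on $N$ as a $\Z_p$-linear endomorphism, and $\#(N/\varphi_n) = p^{\ord_p(\det)}$ provided the determinant is nonzero, in which case $N[\varphi_n] = 0$. The determinant is, up to a unit, the resultant $\prod_{g(\beta)=0} \varphi_n(\beta)$ over the roots $\beta$ of $g$ in $\ol{\Q}_p$, which all satisfy $|\beta|_p < 1$.

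Write $\varphi_n(T) = \prod_{\zeta}((1+T) - \zeta)$, where $\zeta$ runs over the primitive $p^n$-th roots of unity. Each factor is $T - (\zeta - 1)$, and $\ord_p(\zeta - 1) = 1/(p^n - p^{n-1})$. A root $\beta$ of $g$ with $\varphi_n(\beta) = 0$ would be some $\zeta-1$, forcing $g$ to be divisible by the minimal polynomial of $\zeta-1$, which has degree $p^{n-1}(p-1)$. The hypothesis $\lambda(f) < p^{n-1}(p-1)$ rules this out, so the determinant is nonzero and $\rho_{\varphi_n}(N)$ is defined.

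The main step is to show $\ord_p(\prod_\beta \varphi_n(\beta)) = \lambda(f)$. By symmetry this equals $\ord_p(\prod_\zeta g(\zeta-1)) = \ord_p(N_{\Q_p(\zeta)/\Q_p}(g(\zeta-1)))$. Since $g$ is distinguished of degree $\lambda < [\Q_p(\zeta):\Q_p]$, we have $g(\pi) = \pi^\lambda + (\text{terms in } p\pi^{k},\ k<\lambda)$, where $\pi = \zeta-1$ has valuation $1/(p^n-p^{n-1})$. The leading term $\pi^{\lambda}$ has valuation $\lambda/(p^n-p^{n-1}) < 1$, while every other term has valuation at least $1$. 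Hence $\pi^\lambda$ strictly dominates, so $\ord_p(g(\pi)) = \lambda/(p^n-p^{n-1})$. Taking the norm multiplies this by the degree $p^n - p^{n-1}$, giving $\lambda(f)$.

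Thus $\rho_{\varphi_n}(N) = \lambda(f) = \lambda(f) + \mu(f)(p^n-p^{n-1})$, as required since $\mu(f) = 0$.
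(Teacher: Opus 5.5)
Your proof is correct and follows the paper's argument: the same prime-filtration reduction via Lemma \ref{lem:rho_add}, the same treatment of the maximal ideal and of $(p)$, and the same dominant-term estimate $\ord_p(\pi^{\lambda}) < 1 = \ord_p(p)$ at $\pi = \zeta_{p^n}-1$. Your explicit check that each height-one $\fp_i$ is generated by a prime divisor of $\cha(M)$ spells out a point the paper leaves implicit. The only real difference is in the case $f \notin p\Lambda$: you compute $\#(\Lambda/(f,\varphi_n))$ as a resultant on $\Lambda/(f) \simeq \Z_p^{\lambda(f)}$ via Weierstrass preparation and then pass to $N_{\Q_p(\zeta_{p^n})/\Q_p}(g(\zeta_{p^n}-1))$. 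The paper instead reads the order off directly from $\Lambda/(\varphi_n) \simeq \Z_p[\zeta_{p^n}]$ as the valuation of $f(\zeta_{p^n}-1)$ in this totally ramified DVR, which avoids Weierstrass preparation and the determinant bookkeeping.
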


\begin{proof}
We first reduce the proposition to the special case where $M = \Lambda/\fp$ for a prime ideal $\fp$.
Since $\Lambda$ is a noetherian commutative ring and $M$ is finitely generated, there exists a prime filtration
\[
0 = M_0 \subset M_1 \subset \cdots \subset M_r = M
\]
such that $M_j/M_{j-1}$ is isomorphic to $\Lambda/\fp_j$ for some prime ideal $\fp_j$.
By the additivity of $\rho_{\varphi_n}(-)$ (Lemma \ref{lem:rho_add}(2)), we have
\[
\rho_{\varphi_n}(M) = \sum_{j=1}^r \rho_{\varphi_n}(M_j/M_{j-1}),
\]
as long as each term on the right hand side is well-defined.
On the other hand, the additivity of $\lambda(-)$ and $\mu(-)$ shows
\[
\lambda(M) = \sum_{j=1}^r \lambda(M_j/M_{j-1}),
\quad
\mu(M) = \sum_{j=1}^r \mu(M_j/M_{j-1}).
\]
Therefore, it is enough to establish the claim for $M_j/M_{j-1}$ for all $j$.

In what follows, we assume $M = \Lambda/\fp$ with $\fp$ a prime ideal of $\Lambda$.
Since $M$ is torsion, we have $\fp \neq (0)$.
If $\fp$ is the maximal ideal of $\Lambda$, then $M$ is finite and both sides of the claim are zero (by Lemma \ref{lem:rho_add}(3)).
Therefore, it remains to consider the case where the height of $\fp$ is one.

Suppose $\fp = (p)$.
Then $M = \Lambda/(p) \simeq \F_p[[T]]$ and $\ol{\varphi_n}$ corresponds to $T^{p^n - p^{n-1}}$.
Therefore, we obtain
\[
\rho_{\varphi_n}(M)
= \ord_p(\#(\F_p[[T]]/(T^{p^n - p^{n-1}})))
= p^n - p^{n-1}
\]
and the claim holds.

Suppose $\fp = (f)$ for some prime element $f \in \Lambda \setminus p \Lambda$.
Set $\lambda = \lambda(f)$.
Using the identification $\Lambda \simeq \Z_p[[T]]$, we write
\[
f = a_0 + a_1 T + \cdots + a_{\lambda} T^{\lambda} + \cdots,
\]
so we have $p \nmid a_{\lambda}$ and $p \mid a_i$ for $i = 0, 1, \dots, \lambda - 1$.
We take a primitive $p^n$-th root of unity $\zeta_{p^n}$.
Then we have an isomorphism
\[
\Lambda/(\varphi_n) \simeq \Z_p[\zeta_{p^n}]
\]
by sending $\gamma$ to $\zeta_{p^n}$.
It follows that
\[
M/\varphi_n
\simeq (\Lambda/(\varphi_n))/f
\simeq
\Z_p[\zeta_{p^n}]/(f(\zeta_{p^n}-1)).
\]
Let $\ord_{\Z_p[\zeta_{p^n}]}(-)$ be the valuation of $\Q_p(\zeta_{p^n})$ whose image is $\Z \cup \{\infty\}$.
Since
\[
\ord_{\Z_p[\zeta_{p^n}]}(p) 
= p^{n-1}(p-1)
> \lambda
= \ord_{\Z_p[\zeta_{p^n}]}((\zeta_{p^n}-1)^{\lambda})
\]
by the assumption, we see that 
\[
\lambda = \ord_{\Z_p[\zeta_{p^n}]}(a_{\lambda} (\zeta_{p^n}-1)^{\lambda}) 
< \ord_{\Z_p[\zeta_{p^n}]}(a_i (\zeta_{p^n}-1)^i)
\]
for any $i \neq \lambda$.
Therefore,
\[
\ord_{\Z_p[\zeta_{p^n}]}(f(\zeta_{p^n}-1))
= \lambda,
\]
which shows $\ord_p(\# (M/\varphi_n)) = \lambda$.
In particular, $M/\varphi_n$ is finite, so $f$ and $\varphi_n$ are coprime to each other and we have $M[\varphi_n] = 0$.
Consequently, we obtain
\[
\rho_{\varphi_n}(M)
= \ord_p(\# (M/\varphi_n))
= \lambda.
\]
This completes the proof.
\end{proof}

\subsection{Iwasawa's asymptotic formula}\label{ss:ICNF_pf2}

Let $K_{\infty}/K$ be a $\Z_p$-extension with $\Gamma = \Gal(K_{\infty}/K)$.
Let $Y_{K_{\infty}}$ be the finitely generated torsion $\Z_p[[\Gamma]]$-module constructed in Theorem \ref{thm:main_ram_Iw}.
As noted in its proof, we accept the fact that $X_{K_{\infty}}$ is torsion.
For $\Z_p$-extensions, this fact can be alternatively deduced by using Proposition \ref{prop:ICNF_1} below, though we omit the proof.

For each place $v$ of $K$ that is ramified in $K_{\infty}/K$, the inertia group $I_v$ is open in $\Gamma$ since $\Gamma \simeq \Z_p$ and $I_v$ is non-trivial.
It follows that there is an integer $m \geq 0$ such that every ramified place is totally ramified in $K_{\infty}/K_m$.

Now Proposition \ref{prop:ICNF_12} is restated in this case as follows.
We fix a topological generator $\gamma \in \Gamma$ and define $\omega_n$ and $\varphi_n$ as in the paragraph before Proposition \ref{prop:ICNF_4}.

\begin{prop}\label{prop:ICNF_1}
Let $m \geq 0$ be an integer such that every ramified place is totally ramified in $K_{\infty}/K_m$.
Then, for each $n \geq m$, we have an exact sequence
\[
0 \to \omega_n Y_{K_{\infty}} \to X_{K_{\infty}} \to A_{K_n} \to 0.
\]
In particular, we have
\[
\frac{\# A_{K_n}}{\# A_{K_m}} = \# \left( \frac{\omega_m Y_{K_{\infty}}}{\omega_n Y_{K_{\infty}}} \right).
\]
\end{prop}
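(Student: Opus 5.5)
The plan is to specialize Proposition \ref{prop:ICNF_12} to $L = K_{\infty}$ and $K' = K_n$, and then tensor everything with $\Z_p$. Since $K_{\infty}/K$ is unramified outside $S$ and every place of $S$ is totally ramified in $K_{\infty}/K_m$, it is also totally ramified in $K_{\infty}/K_n$ for each $n \geq m$. This is exactly the hypothesis of Proposition \ref{prop:ICNF_12} with $G' = \Gal(K_{\infty}/K_n) = \Gamma^{p^n}$.

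That proposition gives surjectivity of $X(K_{\infty}) \to X(K_n)$. It also identifies the kernel with $(\widehat{\Delta} G') Y^S(K_{\infty})$, compatibly with the inclusion $X(K_{\infty}) \hookrightarrow Y^S(K_{\infty})$ from Theorem \ref{thm:main_ram_inf}.

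Next, apply $\Z_p \otimes_{\hatZ} (-)$. This functor is exact on profinite $\hatZ$-modules, since it projects onto the pro-$p$ part. It sends $X(K_{\infty})$ to $X_{K_{\infty}}$, $X(K_n)$ to $A_{K_n}$, and $Y^S(K_{\infty})$ to $Y_{K_{\infty}}$.

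It remains to identify the augmentation ideal. The image of $\widehat{\Delta}G'$ in $\Z_p[[\Gamma]]$ is the augmentation ideal of $\Z_p[[\Gamma^{p^n}]]$. Because $\Gamma^{p^n}$ is procyclic with topological generator $\gamma^{p^n}$, this ideal is the closed ideal generated by $\omega_n = \gamma^{p^n}-1$. Hence the kernel becomes $\omega_n Y_{K_{\infty}}$, which is closed because $Y_{K_{\infty}}$ is compact. This yields the exact sequence $0 \to \omega_n Y_{K_{\infty}} \to X_{K_{\infty}} \to A_{K_n} \to 0$.

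The one point to check carefully is that the $\Z_p$-tensoring commutes with forming the kernel $(\widehat{\Delta}G')Y^S$ and with the identification of $\omega_n$. If needed, I would argue directly from the commutative diagram in the proof of Proposition \ref{prop:ICNF_12}, already tensored with $\Z_p$. The right-hand vertical map $\bigoplus_v \Z_p[[\Gamma/I_v]] \to \bigoplus_v \Z_p[\Gamma/\Gamma' I_v]$ is an isomorphism, because $I_v \supset \Gamma'$. The snake lemma then gives that the kernel of $X_{K_{\infty}} \to A_{K_n}$ equals the kernel of $Y_{K_{\infty}} \to (Y_{K_{\infty}})_{\Gamma'} = Y_{K_{\infty}}/\omega_n Y_{K_{\infty}}$, which is $\omega_n Y_{K_{\infty}}$. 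It also gives surjectivity. Here I use \eqref{eq:Iw_inf} and \eqref{eq:Iw_fin} from Theorem \ref{thm:main_ram_Iw} together with the compatibility of Lemma \ref{lem:funct_ram}(1).

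For the second claim, apply the exact sequence for $n$ and for $m$. Since $\omega_m$ divides $\omega_n$ when $n \geq m$, we have $\omega_n Y_{K_{\infty}} \subset \omega_m Y_{K_{\infty}} \subset X_{K_{\infty}}$. Both quotients $X_{K_{\infty}}/\omega_n Y_{K_{\infty}} \simeq A_{K_n}$ and $X_{K_{\infty}}/\omega_m Y_{K_{\infty}} \simeq A_{K_m}$ are finite. The third isomorphism theorem then gives $\#A_{K_n} = \#A_{K_m}\cdot \#(\omega_m Y_{K_{\infty}}/\omega_n Y_{K_{\infty}})$, as claimed.
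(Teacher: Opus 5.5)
Your proposal is correct and follows the paper's route. The paper presents Proposition~\ref{prop:ICNF_1} simply as Proposition~\ref{prop:ICNF_12} specialized to $L = K_{\infty}$ and $K' = K_n$, passed to $p$-parts, with the augmentation ideal of $\Z_p[[\Gamma^{p^n}]]$ identified with $(\omega_n)$. Your fallback snake-lemma argument on the $\Z_p$-level diagram is exactly the paper's proof of Proposition~\ref{prop:ICNF_12}, and your index count for the second claim is the step the paper leaves implicit.
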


This proposition plays a central role in the derivation of Iwasawa's asymptotic class number formula, both in the original proof (Iwasawa \cite{Iwa59}; see also Washington \cite{Was97}) and in our approach.
The original proof of this proposition itself requires more technical arguments.
In particular, it relies heavily on the assumption that $K_{\infty}/K$ is a $\Z_p$-extension, and requires the choice of an auxiliary $p$-adic place.
In our proof presented above, those techniques are no longer necessary.

We now prove the main theorem.

\begin{thm}[Iwasawa's asymptotic class number formula]\label{thm:main_ICNF}
Let $n_0 \geq m \geq 0$ be integers satisfying the following:
\begin{itemize}
\item[(a)]
every ramified place is totally ramified in $K_{\infty}/K_m$,
\item[(b)]
$p^{n_0}(p-1) > \lambda(f)$ holds for any prime divisor $f$ of $\cha(X_{K_{\infty}})$, and
\item[(c)]
$(X_{K_{\infty}})_{\fin}$ is annihilated by $\omega_{n_0}/\omega_ m$, where $(-)_{\fin}$ denotes the maximal finite $\Lambda$-submodule.
\end{itemize}
Then there exists $\nu \in \Z$ such that
\[
\ord_p(\# \Cl(K_n))
= \lambda n + \mu p^n + \nu
\]
holds for all $n \geq n_0$ with $\lambda = \lambda(X_{K_{\infty}})$ and $\mu = \mu(X_{K_{\infty}})$.
\end{thm}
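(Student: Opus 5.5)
We start from Proposition \ref{prop:ICNF_1}. For $n \geq m$ it gives $\# A_{K_{n}}/\# A_{K_{n-1}} = \#(\omega_{n-1}Y/\omega_n Y)$ for $n \geq m+1$, where $Y = Y_{K_{\infty}}$. We then express this index through $\rho_{\varphi_n}$. Set $N_n = \omega_{n-1} Y$; note $N_n \subset \omega_m Y \subset X_{K_{\infty}}$ for $n-1 \geq m$, by the exact sequence of Proposition \ref{prop:ICNF_1}. Since $\omega_n Y = \varphi_n N_n$, we get
\[
\ord_p(\# A_{K_n}) - \ord_p(\# A_{K_{n-1}}) = \ord_p\#(N_n/\varphi_n) = \rho_{\varphi_n}(N_n) + \ord_p\#(N_n[\varphi_n]).
\]
Here the finiteness of $N_n/\varphi_n$ comes for free from Proposition \ref{prop:ICNF_1}, and Lemma \ref{lem:rho_wd} then makes $\rho_{\varphi_n}(N_n)$ defined.

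Next we compute $\rho_{\varphi_n}(N_n)$. The quotient $X_{K_{\infty}}/N_n$ surjects onto nothing infinite: by Proposition \ref{prop:ICNF_1}, $X_{K_{\infty}}/\omega_{n-1}Y \simeq A_{K_{n-1}}$ is finite. Lemma \ref{lem:rho_add}(2),(3) then gives $\rho_{\varphi_n}(N_n) = \rho_{\varphi_n}(X_{K_{\infty}})$, and in particular the latter is defined. Hypothesis (b) gives $p^{n-1}(p-1) > \lambda(f)$ whenever $n-1 \geq n_0$, so Proposition \ref{prop:ICNF_4} yields
\[
\rho_{\varphi_n}(X_{K_{\infty}}) = \lambda + \mu(p^n - p^{n-1}).
\]
(Since $\cha(X_{K_\infty})$ and $\cha(Y)$ differ, we use $X_{K_{\infty}}$ directly, which is fine because $X_{K_{\infty}}$ is torsion, as accepted earlier.)

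The main obstacle is to show that $N_n[\varphi_n] = 0$ for $n > n_0$. Since $N_n \subset X_{K_{\infty}}$, it suffices to show $X_{K_{\infty}}[\varphi_n] = 0$. First, $X_{K_{\infty}}[\varphi_n]$ is finite by Lemma \ref{lem:rho_wd}, because $X_{K_{\infty}}/\varphi_n$ is finite. Hence it lies in $(X_{K_{\infty}})_{\fin}$, which hypothesis (c) says is annihilated by $\omega_{n_0}/\omega_m$. For $n > n_0$ the elements $\varphi_n$ and $\omega_{n_0}/\omega_m = \varphi_{m+1}\cdots\varphi_{n_0}$ are products of distinct cyclotomic factors. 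Working in the finite module $F = (X_{K_{\infty}})_{\fin}$ with $x \in F[\varphi_n]$, I would use $\varphi_n \equiv p \pmod{\omega_{n-1}}$: this gives $\varphi_n \equiv p$ modulo $\omega_{n_0}$, hence modulo $\omega_{n_0}/\omega_m$ up to the factor $\omega_m$. That makes $px$ lie in $\omega_m F$-type expressions. I expect to need a slightly sharper argument here, for instance working with $\omega_{n-1}Y$ and $\omega_m$ directly, or using that $\varphi_n$ acts on $F$ through a unit times $p$. If this step resists, the fallback is to keep the term $\ord_p \# N_n[\varphi_n]$ and show it is eventually constant, hence absorbed into $\nu$. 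That would give the formula only for larger $n$.

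Given $N_n[\varphi_n]=0$, summing the differences from $n_0$ to $n$ gives
\[
\ord_p\# A_{K_n} = \ord_p\# A_{K_{n_0}} + \lambda(n-n_0) + \mu(p^n - p^{n_0}),
\]
which is $\lambda n + \mu p^n + \nu$ with $\nu$ constant for all $n \geq n_0$.
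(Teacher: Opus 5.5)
Your successive-difference setup is correct up to the last step. The identity $\ord_p\#A_{K_n}-\ord_p\#A_{K_{n-1}}=\rho_{\varphi_n}(N_n)+\ord_p\#N_n[\varphi_n]$, the equality $\rho_{\varphi_n}(N_n)=\rho_{\varphi_n}(X_{K_\infty})$ via Lemma \ref{lem:rho_add}, and the value $\lambda+\mu(p^n-p^{n-1})$ from Proposition \ref{prop:ICNF_4} for $n-1\geq n_0$ all hold.

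The gap is the reduction ``it suffices to show $X_{K_\infty}[\varphi_n]=0$''. This is false whenever $(X_{K_\infty})_{\fin}\neq 0$, and (c) allows that; it forces $(X_{K_\infty})_{\fin}=0$ only when $n_0=m$. The ring $\Lambda$ is local with maximal ideal $\mathfrak{m}=(p,\gamma-1)$, and $\varphi_n\in\mathfrak{m}$. Any nonzero finite $\Lambda$-module $F$ has $F[\mathfrak{m}]\neq 0$, so $X_{K_\infty}[\varphi_n]\supset(X_{K_\infty})_{\fin}[\mathfrak{m}]\neq 0$. For instance, $\varphi_n$ acts on $\F_p$ with trivial $\Gamma$-action as $p=0$.

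That $\varphi_n$ and $\omega_{n_0}/\omega_m$ share no cyclotomic factor only makes them coprime at height one. When $n_0>m$ both lie in $\mathfrak{m}$, so no congruence such as $\varphi_n\equiv p$ can make $\varphi_n$ injective on a finite module killed by $\omega_{n_0}/\omega_m$. Your fallback is also misstated. In a telescoping sum of successive differences, a constant nonzero term $c$ per step contributes $c(n-n_0)$, which changes the coefficient of $n$, not $\nu$.

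The claim you actually need, $N_n[\varphi_n]=0$ for $n>n_0$, is true, but only because $N_n=\omega_{n-1}Y$ has a specific shape. Suppose $\varphi_n\omega_{n-1}y=0$. Then $z=\omega_m y$ is killed by $\omega_n/\omega_m$. Since $\omega_mY/\omega_nY$ is finite by Proposition \ref{prop:ICNF_1}, Lemma \ref{lem:rho_wd} makes $(\omega_mY)[\omega_n/\omega_m]$ finite, so $z\in(X_{K_\infty})_{\fin}$. Then (c) gives $\omega_{n_0}y=0$, hence $\omega_{n-1}y=0$ because $n-1\geq n_0$.

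The paper avoids successive differences altogether and compares level $n$ directly with level $m$:
$\ord_p\#(\omega_mY/\omega_nY)=\rho_{\omega_n/\omega_m}(\omega_mY)+\ord_p\#\bigl((\omega_mY)[\omega_n/\omega_m]\bigr)$.
By (c) and Lemma \ref{lem:rho_wd}, the kernel term equals $(\omega_mY)_{\fin}$ for every $n\geq n_0$, a constant absorbed into $\nu$. The $\rho$ term splits over the factors $\varphi_k$ by Lemma \ref{lem:rho_add}(1), and Proposition \ref{prop:ICNF_4} evaluates those with $k>n_0$.
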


\begin{proof}
We write $Y = Y_{K_{\infty}}$.
Note first that $\omega_m Y$ is a submodule of $X_{K_{\infty}}$ of finite index by Proposition \ref{prop:ICNF_1}, so conditions (b) and (c) also hold for $\omega_m Y$ instead of $X_{K_{\infty}}$, and their $\lambda$- and $\mu$-invariants coincide as well.

For $n \geq m$, we set
\[
\nu_{m, n} 
= \frac{\omega_n}{\omega_m} = \varphi_{m+1} \cdots \varphi_n.
\]
By Proposition \ref{prop:ICNF_1} and condition (a), we have
\begin{align*}
\ord_p(\# \Cl(K_n)) - \ord_p(\# \Cl(K_m)) 
& = \ord_p(\# (\omega_m Y / \omega_n Y))\\
& = \ord_p(\# ((\omega_m Y)/\nu_{m, n}))\\
& = \rho_{\nu_{m, n}}(\omega_m Y) + \ord_p(\#((\omega_m Y)[\nu_{m, n}])).
\end{align*}
By Lemma \ref{lem:rho_add}(1), we have
\[
\rho_{\nu_{m, n}}(\omega_m Y)
= \rho_{\nu_{m, m+1}}(\omega_m Y) + \cdots + \rho_{\nu_{n-1, n}}(\omega_m Y).
\]
Then for $n \geq n_0$, Proposition \ref{prop:ICNF_4} and condition (b) show
\[
\rho_{\nu_{m, n}}(\omega_m Y)
 = \rho_{\nu_{m, n_0}}(\omega_m Y) + \lambda(\omega_m Y)(n - n_0) + \mu(\omega_m Y) (p^n - p^{n_0}).
\]
We also have $(\omega_m Y)[\nu_{m, n}] = (\omega_m Y)_{\fin}$ by condition (c).
This completes the proof.
\end{proof}

\begin{rem}
Here are remarks on conditions (b) and (c).
\begin{itemize}
\item
The usual argument relying on the structure theorem of finitely generated $\Lambda$-modules naturally requires a condition that $p^{n_0}(p-1) > \lambda(f)$ for each direct summand $\Lambda/(f)$ of the elementary module pseudo-isomorphic to $X_{K_{\infty}}$.
Our argument relying on the prime filtrations and the additivity of $\rho_{\alpha}(-)$ weakens this condition to condition (b).

\item
A sufficient condition for (c) is $n_0 - m \geq \length_{\Lambda}((X_{K_{\infty}})_{\fin}) = \ord_p(\# (X_{K_{\infty}})_{\fin})$.
This follows by considering the descending series
\[
(X_{K_{\infty}})_{\fin} 
\supset \nu_{m, m+1}  (X_{K_{\infty}})_{\fin}
\supset \nu_{m, m+2}  (X_{K_{\infty}})_{\fin}
\supset \cdots
\]
and using Nakayama's lemma; if an equality holds then the term is already zero.
\end{itemize}
\end{rem}

Let us mention an earlier result of Sands \cite[Theorem 3.1]{San91} giving the explicit range of $n$.
It is shown that the asymptotic formula holds for any $n \geq 0$ under the following assumptions:
\begin{itemize}
\item
every ramified place is totally ramified in $K_{\infty}/K$,
\item
$\mu(X_{K_{\infty}}) = 0$ and $\lambda(X_{K_{\infty}}) < p-1$ hold, and
\item
$(X_{K_{\infty}})_{\fin} = 0$ holds.
\end{itemize}
This is recovered from Theorem \ref{thm:main_ICNF} by taking $n_0 = m = 0$ (the assumption $\mu(X_{K_{\infty}}) = 0$ can be removed).

\section{Duality of class groups along $\Z_p^d$-extensions}\label{sec:duality}

In this section, we deduce Theorem \ref{thm:intro_duality} and Corollary \ref{cor:GGC_equiv} as consequences of Theorem \ref{thm:main_ram_Iw}.
As already discussed in Subsection \ref{ss:intro_duality}, the proof largely consists of a general study of the functors $\fG$ and $\fF$.
In Subsection \ref{ss:gen_funct}, we study linear functors in general.
By using those results, we study the functor $E^1 = \Ext^1_{\Lambda}(-, \Lambda)$ in Subsection \ref{ss:E1} and the functors $\fG$ and $\fF$ in Subsection \ref{ss:FG}.
Unlike in Section \ref{sec:app_Iw}, we use the structure theorem for $\Lambda$-modules.
The main theorems will be proved in Subsection \ref{ss:pf_final}.

\subsection{General observations on linear functors}\label{ss:gen_funct}

Throughout Subsections \ref{ss:gen_funct}--\ref{ss:FG}, let $\Gamma$ be a profinite group that is isomorphic to $\Z_p^d$ for some $d \geq 1$.
We set $\Lambda = \Z_p[[\Gamma]]$, which is known to be isomorphic to $\Z_p[[T_1, \dots, T_d]]$ (Serre's isomorphism).

Recall that a $\Lambda$-module $M$ is said to be pseudo-null if $M$ is finitely generated and $M_\fp = 0$ holds for any prime ideal $\fp$ of $\Lambda$ with $\height(\fp) \leq 1$.
A $\Lambda$-homomorphism $\alpha: M \to N$ between finitely generated modules is said to be a pseudo-isomorphism if both $\Ker(\alpha)$ and $\Coker(\alpha)$ are pseudo-null.
This is equivalent to the assertion that the induced homomorphism $\alpha_\fp: M_\fp \to N_\fp$ is an isomorphism for any prime ideal $\fp$ of $\Lambda$ with $\height(\fp) \leq 1$.
Two finitely generated torsion modules $M$ and $N$ are said to be pseudo-isomorphic if there exists a pseudo-isomorphism $\alpha: M \to N$.
It is known that this is an equivalence relation.

Let $\modC{\Lambda}$ denote the category of finitely generated $\Lambda$-modules.
We only state results for covariant functors for simplicity, and the obvious variants for contravariant functors are also valid.
A covariant functor $\cF: \modC{\Lambda} \to \modC{\Lambda}$ is said to be $\Lambda$-linear if, for any $M$ and $N$, the map
\[
\Hom_{\Lambda}(M, N) \to \Hom_{\Lambda}(\cF(M), \cF(N)),
\quad
\alpha \mapsto \cF(\alpha)
\]
is a $\Lambda$-homomorphism.

For such a $\Lambda$-linear functor $\cF$, it is easy to see that the annihilator of $M$ is contained in the annihilator of $\cF(M)$.
Indeed, if $s \in \Lambda$ annihilates $M$, then $s \id_M = 0$ implies
\[
s \id_{\cF(M)} = s \cF(\id_M) = \cF(s \id_M) = \cF(0) = 0.
\]
In particular, $\cF(M)$ is torsion (resp.~pseudo-null) if so is $M$.
We will show in Proposition \ref{prop:funct_2}(1), moreover, that $\cF(\alpha)$ is a pseudo-isomorphism if so is $\alpha$.

\begin{lem}\label{lem:loc_hom}
Let $\fp$ be a prime ideal of $\Lambda$.
For finitely generated $\Lambda$-modules $M$ and $N$, we have a natural isomorphism
\[
\Hom_{\Lambda}(N, M)_{\fp} \simeq \Hom_{\Lambda_{\fp}}(N_{\fp}, M_{\fp})
\]
induced by $\alpha \mapsto \alpha_{\fp}$ for $\alpha \in \Hom_{\Lambda}(N, M)$.
\end{lem}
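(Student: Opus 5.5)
The plan is the standard argument that localization commutes with $\Hom$ out of a finitely presented module. Since $\Lambda$ is noetherian, $N$ is finitely presented, so I will start by choosing a presentation
\[
\Lambda^{b} \xrightarrow{\ \beta\ } \Lambda^{a} \to N \to 0 .
\]
Applying the left exact contravariant functor $\Hom_{\Lambda}(-, M)$ gives an exact sequence
\[
0 \to \Hom_{\Lambda}(N, M) \to M^{a} \to M^{b}.
\]
Localization at $\fp$ is exact, so localizing yields
\[
0 \to \Hom_{\Lambda}(N, M)_{\fp} \to M_{\fp}^{a} \to M_{\fp}^{b}.
\]

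Next I will localize the presentation first. This gives a presentation $\Lambda_{\fp}^{b} \to \Lambda_{\fp}^{a} \to N_{\fp} \to 0$ of the $\Lambda_{\fp}$-module $N_{\fp}$. Applying $\Hom_{\Lambda_{\fp}}(-, M_{\fp})$ then produces
\[
0 \to \Hom_{\Lambda_{\fp}}(N_{\fp}, M_{\fp}) \to M_{\fp}^{a} \to M_{\fp}^{b}.
\]
The map $M_{\fp}^{a} \to M_{\fp}^{b}$ here is given by the same matrix $\beta$ as in the previous sequence. So the two kernels agree.

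It remains to check that this identification is induced by $\alpha \mapsto \alpha_{\fp}$. This is essentially the only point requiring care. I will first define the natural map $\Hom_{\Lambda}(N, M)_{\fp} \to \Hom_{\Lambda_{\fp}}(N_{\fp}, M_{\fp})$ by $\alpha/s \mapsto s^{-1}\alpha_{\fp}$. It is well defined and $\Lambda_{\fp}$-linear. I will then observe that it fits into a commutative diagram with the two left exact sequences above. The middle vertical arrow is the identity of $M_{\fp}^{a}$, because $\alpha$ and $\alpha_{\fp}$ are both determined by the images of the $a$ generators. The right vertical arrow is the identity of $M_{\fp}^{b}$. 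A diagram chase, or the five lemma applied to the kernels, then shows that the left vertical map is an isomorphism.

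Naturality in $M$ and $N$ is clear from the construction.
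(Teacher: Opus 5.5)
Your proposal is correct and follows the same route as the paper. You take a finite presentation of $N$, apply the left exact functor $\Hom(-,M)$, use exactness of localization, and compare kernels via the free case. You also supply details the paper omits, namely the explicit map $\alpha/s \mapsto s^{-1}\alpha_{\fp}$ and the commutative diagram identifying the two kernels.
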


\begin{proof}
When $N$ is a free module, the claim is clear.
For general $N$, we can take a finite presentation $F' \to F \to N \to 0$ with finitely generated free modules $F$ and $F'$.
Then the claim follows from the claim for $F$ and $F'$, together with the exactness of the localization and the left exactness of $\Hom$; we omit the details.
\end{proof}

\begin{lem}\label{lem:funct_3}
Let $\fp$ be a prime ideal of $\Lambda$.
Let $\cF: \modC{\Lambda} \to \modC{\Lambda}$ be a $\Lambda$-linear functor.
If $\alpha: M \to N$ is a homomorphism in $\modC{\Lambda}$ such that $\alpha_\fp: M_{\fp} \to N_{\fp}$ is an isomorphism, then $\cF(\alpha)_\fp: \cF(M)_{\fp} \to \cF(N)_{\fp}$ is also an isomorphism.
\end{lem}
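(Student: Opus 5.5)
The key point is that a $\Lambda$-linear functor sends an element of $\Lambda$ acting on a module to the same element acting on the image module. This lets us produce an inverse of $\cF(\alpha)$ after localization by clearing denominators. Concretely, we use Lemma \ref{lem:loc_hom} to lift an inverse of $\alpha_\fp$ to a global morphism. Apply Lemma \ref{lem:loc_hom} with the roles $\Hom_{\Lambda}(N, M)_{\fp} \simeq \Hom_{\Lambda_{\fp}}(N_{\fp}, M_{\fp})$. The inverse $(\alpha_\fp)^{-1} \in \Hom_{\Lambda_\fp}(N_\fp, M_\fp)$ then corresponds to an element of the localization. Hence there exist $\beta \in \Hom_\Lambda(N, M)$ and $s \in \Lambda \setminus \fp$ such that $(\alpha_\fp)^{-1} = \beta_\fp / s$, i.e. $\beta_\fp = s (\alpha_\fp)^{-1}$.

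Next compare $\beta \circ \alpha$ with $s \id_M$. Both are elements of $\Hom_\Lambda(M, M)$ whose images in $\Hom_{\Lambda_\fp}(M_\fp, M_\fp)$ coincide, namely $s \id_{M_\fp}$. By the injectivity part of Lemma \ref{lem:loc_hom} (now for the pair $(M, M)$), there is $t \in \Lambda \setminus \fp$ with $t(\beta\circ\alpha - s\id_M) = 0$ in $\Hom_\Lambda(M,M)$. Similarly there is $t' \in \Lambda\setminus\fp$ with $t'(\alpha\circ\beta - s\id_N) = 0$ in $\Hom_\Lambda(N,N)$. Replacing $t, t'$ by $tt'$, we may use a single element $t$.

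Now apply $\cF$. By functoriality and $\Lambda$-linearity, we obtain $t\,\cF(\beta)\circ\cF(\alpha) = \cF(t\beta\circ\alpha) = \cF(ts\id_M) = ts\,\id_{\cF(M)}$, and likewise $t\,\cF(\alpha)\circ\cF(\beta) = ts\,\id_{\cF(N)}$. Localizing at $\fp$, the element $ts$ becomes a unit in $\Lambda_\fp$. Hence $(ts)^{-1} t\,\cF(\beta)_\fp$ is a two-sided inverse of $\cF(\alpha)_\fp$, so $\cF(\alpha)_\fp$ is an isomorphism.

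The only step requiring care is the passage from equalities of localized homomorphisms back to equalities up to an element outside $\fp$. This uses that $\Hom_\Lambda(N, M)_\fp \to \Hom_{\Lambda_\fp}(N_\fp, M_\fp)$ is an isomorphism, which needs $N$ finitely presented; this holds since $\Lambda$ is noetherian and is already provided by Lemma \ref{lem:loc_hom}. Note that we never need $\cF$ to commute with localization: linearity over $\Lambda$ alone carries the scalars $s,t$ through $\cF$.
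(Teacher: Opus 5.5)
Your proof is correct and is essentially the paper's argument. Both reduce to the existence of $\beta\colon N \to M$ and an element $s \notin \fp$ with $\alpha\circ\beta = s\,\id_N$ and $\beta\circ\alpha = s\,\id_M$; in your version these are $t\beta$ and $ts$. This relation is carried through $\cF$ by $\Lambda$-linearity and then inverted after localizing. The only difference is that you write out the clearing-of-denominators step via Lemma \ref{lem:loc_hom}, which the paper leaves implicit.
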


\begin{proof}
We first note that, for any homomorphism $\alpha: M \to N$ in $\modC{\Lambda}$, the following are equivalent:
\begin{itemize}
\item[(i)]
The induced homomorphism $\alpha_{\fp}: M_{\fp} \to N_{\fp}$ is an isomorphism.
\item[(ii)]
There exist a homomorphism $\beta: N \to M$ and an element $s \in \Lambda \setminus \fp$ such that $\alpha \circ \beta = s \id_N$ and $\beta \circ \alpha = s \id_M$ hold.
\end{itemize}
This follows from Lemma \ref{lem:loc_hom}.

By (i) $\Rightarrow$ (ii), we can take $\beta: N \to M$ and $s \in \Lambda \setminus \fp$ such that $\alpha \circ \beta = s \id_N$ and $\beta \circ \alpha = s \id_M$ hold.
Then by applying the functor $\cF$, we obtain
\[
\cF(\alpha) \circ \cF(\beta) = s \id_{\cF(N)},
\quad
\cF(\beta) \circ \cF(\alpha) = s \id_{\cF(M)}.
\]
Therefore, by (ii) $\Rightarrow$ (i), we see that $\cF(\alpha)_\fp$ is an isomorphism.
\end{proof}

For a finitely generated $\Lambda$-module $M$, let $M_{\tors}$ be its torsion submodule.

\begin{prop}\label{prop:funct_2}
Let $\cF: \modC{\Lambda} \to \modC{\Lambda}$ be a $\Lambda$-linear functor.
The following hold.
\begin{itemize}
\item[(1)]
For a pseudo-isomorphism $\alpha: M \to N$, the homomorphism $\cF(\alpha): \cF(M) \to \cF(N)$ is a pseudo-isomorphism as well.
\item[(2)]
Suppose that $\cF(\Lambda)$ is torsion.
Then for any finitely generated $\Lambda$-module $M$, the module $\cF(M)$ is torsion.
\item[(3)]
Suppose that $\cF(\Lambda)$ is pseudo-null.
Then for any finitely generated $\Lambda$-module $M$, the homomorphism $\cF(M_{\tors}) \to \cF(M)$ induced by the inclusion map $M_{\tors} \hookrightarrow M$ is a pseudo-isomorphism.
\end{itemize}
\end{prop}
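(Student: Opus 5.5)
Part (1) follows from Lemma \ref{lem:funct_3}. A homomorphism $\alpha: M \to N$ in $\modC{\Lambda}$ is a pseudo-isomorphism exactly when $\alpha_\fp$ is an isomorphism for every prime $\fp$ with $\height(\fp) \leq 1$. By Lemma \ref{lem:funct_3}, $\cF(\alpha)_\fp$ is then an isomorphism for every such $\fp$. Since $\cF(M)$ and $\cF(N)$ are finitely generated by assumption, this is precisely the statement that $\cF(\alpha)$ is a pseudo-isomorphism.

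For (2) and (3), the plan is to reduce to free modules via additivity and then pass to general $M$ using pseudo-isomorphisms or localization.

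\textbf{Additivity.} A $\Lambda$-linear functor, being in particular additive, preserves finite direct sums: the identities $\iota_1\pi_1 + \iota_2\pi_2 = \id$ and $\pi_i\iota_j = \delta_{ij}\id$ are preserved by $\cF$. Hence $\cF(\Lambda^r) \simeq \cF(\Lambda)^r$.

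\textbf{Part (2).} I will show that $\cF(M)_{(0)} = 0$, since a finitely generated $\Lambda$-module is torsion exactly when its localization at $(0)$ vanishes. Let $r = \rank M$ and choose an injection $\Lambda^r \hookrightarrow M$ with torsion cokernel; this is the map $\alpha$ in the argument below. Localizing at $(0)$, the induced map $\Lambda^r_{(0)} \to M_{(0)}$ is injective and, since the cokernel is torsion, also surjective, so it is an isomorphism. By Lemma \ref{lem:funct_3} with $\fp = (0)$, we get $\cF(\Lambda^r)_{(0)} \simeq \cF(M)_{(0)}$. The left side is $(\cF(\Lambda)^r)_{(0)} = 0$ because $\cF(\Lambda)$ is torsion. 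Therefore $\cF(M)$ is torsion.

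\textbf{Part (3).} Write $\bar M = M/M_{\tors}$, which is torsion-free. The short exact sequence $0 \to M_{\tors} \to M \to \bar M \to 0$ is not split in general, and $\cF$ is not assumed exact, so I will avoid relying on exactness and instead verify the condition prime by prime. Fix $\fp$ with $\height(\fp) \leq 1$. The localization $\Lambda_\fp$ is a field or a DVR, because $\Lambda$ is regular. Hence $\bar M_\fp$ is a finitely generated torsion-free module over a PID, so it is free, and the sequence splits after localization: $M_\fp \simeq (M_{\tors})_\fp \oplus \bar M_\fp$.

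I need to lift this local splitting to $M$ up to a unit. Using Lemma \ref{lem:loc_hom} for $\Hom_\Lambda(M, M_{\tors})$, choose a retraction $r: M \to M_{\tors}$ and an element $s \in \Lambda \setminus \fp$ such that $r \circ \iota = s\,\id$, where $\iota$ is the inclusion. Choose also $\bar M \hookrightarrow \Lambda^k$ inducing an isomorphism at $\fp$ (a torsion-free module embeds into a free module with pseudo-null, or at least $\fp$-trivial, cokernel), and a section $\bar M_\fp \to M_\fp$. Then $s\,\id_M - \iota r$ factors through $\bar M$ up to further elements of $\Lambda \setminus \fp$. Applying $\cF$ and localizing, $\cF(\iota)_\fp$ is split injective, and its complement is a direct summand of something built from $\cF(\bar M)_\fp$.

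Finally, $\cF(\bar M)_\fp$ vanishes: by Lemma \ref{lem:funct_3} and additivity it is isomorphic to a direct summand of $\cF(\Lambda^k)_\fp = (\cF(\Lambda)_\fp)^k = 0$, using that $\cF(\Lambda)$ is pseudo-null. Hence $\cF(\iota)_\fp$ is an isomorphism. Since this holds for all $\fp$ of height at most one, $\cF(\iota)$ is a pseudo-isomorphism.

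The main obstacle is this bookkeeping in (3): turning the splitting of $M_\fp$ into global maps with a denominator $s \notin \fp$, in the style of the (i)$\Leftrightarrow$(ii) equivalence in the proof of Lemma \ref{lem:funct_3}, so that $\Lambda$-linearity can be applied.
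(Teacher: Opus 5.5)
Your proposal is correct. Parts (1) and (2) coincide with the paper's proof. At $\fp=(0)$ the paper's map $\Lambda^r\oplus M_{\tors}\to M$ is just your $\Lambda^r\to M$, since $(M_{\tors})_{(0)}=0$.

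For (3), the paper maps \emph{into} $M$. It uses Lemma \ref{lem:loc_hom} to choose $\Lambda^r\to M$ lifting a basis of $(M/M_{\tors})_\fp$, so that $\Lambda^r\oplus M_{\tors}\to M$ is an isomorphism at $\fp$. One application of Lemma \ref{lem:funct_3}, plus additivity, then gives $\cF(\Lambda)_\fp^r\oplus\cF(M_{\tors})_\fp\simeq\cF(M)_\fp$, which settles (2) and (3) at once.

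You map \emph{out of} $M$, using a retraction $r$ and a separate map $\phi\colon\bar M\to\Lambda^r$, followed by an idempotent argument. This works, and the bookkeeping you flag is easier than you suggest. After multiplying $r$ and $s$ by a suitable $t\notin\fp$, you get $r\circ\iota=s\,\id_{M_{\tors}}$ exactly. Then $s\,\id_M-\iota\circ r$ kills $M_{\tors}$, so it equals $\sigma\circ\pi$ for a unique $\sigma\colon\bar M\to M$; no local section and no further denominators are needed. Applying $\cF$ gives $s\,\id_{\cF(M)}=\cF(\iota)\cF(r)+\cF(\sigma)\cF(\pi)$ and $\cF(r)\cF(\iota)=s\,\id$. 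Since $\cF(\bar M)_\fp=0$, this makes $\cF(\iota)_\fp$ an isomorphism.

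More simply still, $(r,\phi\circ\pi)\colon M\to M_{\tors}\oplus\Lambda^r$ is itself an isomorphism at $\fp$ by the five lemma. This is the mirror image of the paper's map, and a single application of Lemma \ref{lem:funct_3} suffices. Your route buys nothing extra; it only costs two lifts instead of one.

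One inaccuracy: your parenthetical claim that a torsion-free module embeds in a free module with pseudo-null cokernel is false for $d\ge 2$. A non-free reflexive module, which exists once $\dim\Lambda\ge 3$, admits no such embedding. You only need the $\fp$-local version, and that follows from Lemma \ref{lem:loc_hom} applied to an isomorphism $\bar M_\fp\simeq\Lambda_\fp^r$.
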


\begin{proof}
(1)
We only have to apply Lemma \ref{lem:funct_3} to all $\fp$ with $\height(\fp) \leq 1$.

(2)(3)
Let $r$ be the rank of $M$ as a $\Lambda$-module.
Let $\fp$ be any prime with $\height(\fp) \leq 1$; we need $\height(\fp) = 0$ (i.e.,~$\fp = (0)$) for (2).
Since $\Lambda_{\fp}$ is either a field or a discrete valuation ring, the module $(M/M_{\tors})_{\fp} \simeq M_{\fp} / (M_{\tors})_{\fp}$ is isomorphic to $\Lambda_{\fp}^r$.
Therefore, by Lemma \ref{lem:loc_hom}, we can take a homomorphism $\Lambda^r \to M$ such that the induced homomorphism
\[
\Lambda^r \oplus M_{\tors} \to M
\]
becomes an isomorphism after base-change to $\Lambda_{\fp}$.
Then by Lemma \ref{lem:funct_3}, the induced homomorphism
\[
\cF(\Lambda)^r \oplus \cF(M_{\tors}) \to \cF(M)
\]
also becomes an isomorphism after base-change to $\Lambda_{\fp}$.
This shows claims (2) and (3).
\end{proof}

We will study three functors $E^1$, $\fG$, and $\fF$, which are all $\Lambda$-linear.
Moreover, all of these send the module $\Lambda$ to $0$, so this proposition is applicable.
When $d = 1$, Lee \cite{Lee20} showed that the particular functors $\fG$ and $\fF$ preserve pseudo-isomorphisms.

\subsection{The functor $E^1$}\label{ss:E1}

For a finitely generated $\Lambda$-module $M$, we define 
\[
E^1(M) = \Ext^1_{\Lambda}(M, \Lambda),
\]
which is again finitely generated since $\Lambda$ is noetherian.
Here, for a $\Lambda$-module $M$, we equip $\Hom_{\Lambda}(M, \Lambda)$ with an action of $\Lambda$ by
\[
(\sigma f)(x) = f(\sigma x)
\]
for $f \in \Hom_{\Lambda}(M, \Lambda)$, $\sigma \in \Gamma$, and $x \in M$.

We regard $E^1$ as a contravariant $\Lambda$-linear functor $E^1: \modC{\Lambda} \to \modC{\Lambda}$.
This functor already plays important roles in Iwasawa theory (for instance, see Jannsen \cite{Jan89}).
We include proofs of known results to demonstrate how Proposition \ref{prop:funct_2} works.

\begin{lem}\label{lem:E1_basic}
The following hold.
\begin{itemize}
\item[(1)]
We have $E^1(\Lambda) = 0$.
\item[(2)]
For any $f \in \Lambda \setminus \{0\}$, we have an isomorphism $E^1(\Lambda/f) \simeq \Lambda/f$.
\end{itemize}
\end{lem}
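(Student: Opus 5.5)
Both claims follow from the long exact $\Ext$ sequence attached to a short free resolution, so the plan is to compute directly from the definition $E^1(M) = \Ext^1_{\Lambda}(M, \Lambda)$.

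For (1), $\Lambda$ is a free, hence projective, $\Lambda$-module. Therefore $\Ext^i_{\Lambda}(\Lambda, -) = 0$ for every $i \geq 1$, and in particular $E^1(\Lambda) = 0$. Nothing more is needed here.

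For (2), we use the free resolution
\[
0 \to \Lambda \overset{\times f}{\to} \Lambda \to \Lambda/f \to 0,
\]
which is exact on the left because $\Lambda \simeq \Z_p[[T_1, \dots, T_d]]$ is an integral domain and $f \neq 0$. Applying $\Hom_{\Lambda}(-, \Lambda)$ gives the exact sequence
\[
0 \to \Hom_{\Lambda}(\Lambda/f, \Lambda) \to \Hom_{\Lambda}(\Lambda, \Lambda) \to \Hom_{\Lambda}(\Lambda, \Lambda) \to E^1(\Lambda/f) \to E^1(\Lambda) = 0,
\]
where the last vanishing is (1).

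The next step is to identify $\Hom_{\Lambda}(\Lambda, \Lambda)$ with $\Lambda$ via $\phi \mapsto \phi(1)$ and to check that this identification is $\Lambda$-linear for the action $(\sigma\phi)(x) = \phi(\sigma x)$ fixed above. Since $\Lambda$ is commutative, $(\sigma\phi)(1) = \phi(\sigma) = \sigma\phi(1)$, so the action corresponds to ordinary multiplication. Under this identification, the map induced by $\times f$ is $\phi \mapsto \phi \circ (\times f)$, which sends $\phi(1)$ to $\phi(f) = f\phi(1)$; that is, it is again multiplication by $f$. Consequently $E^1(\Lambda/f) \simeq \Coker(\Lambda \overset{\times f}{\to} \Lambda) = \Lambda/f$, as $\Lambda$-modules. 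As a byproduct, $\Hom_{\Lambda}(\Lambda/f, \Lambda) = 0$, again because $\Lambda$ is a domain.

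The only point requiring care is this compatibility of the $\Lambda$-module structure with the paper's convention for the action on $\Hom$, and commutativity of $\Lambda$ makes it routine. There is no real obstacle in this lemma.
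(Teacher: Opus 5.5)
Your proposal is correct and follows the paper's proof essentially verbatim: projectivity of $\Lambda$ gives (1), and for (2) you apply $\Hom_{\Lambda}(-, \Lambda)$ to the resolution $0 \to \Lambda \overset{\times f}{\to} \Lambda \to \Lambda/f \to 0$ and identify $\Hom_{\Lambda}(\Lambda, \Lambda) \simeq \Lambda$, under which the induced map is again multiplication by $f$. Your explicit check that this identification respects the paper's convention $(\sigma\phi)(x) = \phi(\sigma x)$ is a detail the paper leaves implicit.
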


\begin{proof}
(1)
This is because $\Lambda$ is projective as a $\Lambda$-module.

(2)
We have a tautological short exact sequence
\[
0 \to \Lambda \overset{\times f}{\to} \Lambda \to \Lambda/f \to 0.
\]
By using $\Hom_{\Lambda}(\Lambda, \Lambda) \simeq \Lambda$, this induces an exact sequence 
\[
0 \to \Lambda \overset{f \times}{\to} \Lambda \to E^1(\Lambda/f) \to 0,
\]
so we obtain the claim.
\end{proof}

\begin{prop}\label{prop:E1_pisom}
Let $M$ be a finitely generated $\Lambda$-module.
\begin{itemize}
\item[(1)]
The module $E^1(M)$ is finitely generated and torsion.
\item[(2)]
The modules $E^1(M)$ and $M_{\tors}$ are pseudo-isomorphic to each other.
\end{itemize}
\end{prop}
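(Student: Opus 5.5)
We will argue with Proposition \ref{prop:funct_2}, applied to the contravariant $\Lambda$-linear functor $E^1$ (using the obvious contravariant variants of its statements). Finite generation of $E^1(M)$ has already been noted: $\Lambda$ is noetherian and $M$ has a resolution by finitely generated free modules. By Lemma \ref{lem:E1_basic}(1), $E^1(\Lambda) = 0$ is torsion, and indeed pseudo-null. So Proposition \ref{prop:funct_2}(2) gives (1): $E^1(M)$ is torsion.

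For (2), Proposition \ref{prop:funct_2}(3) (contravariant version) shows that the map $E^1(M) \to E^1(M_{\tors})$ induced by $M_{\tors} \hookrightarrow M$ is a pseudo-isomorphism. This reduces us to the case where $M$ is torsion.

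For torsion $M$, we use the structure theorem for finitely generated torsion $\Lambda$-modules. It gives a pseudo-isomorphism $\alpha \colon M \to E$, where $E = \bigoplus_i \Lambda/(f_i)$ with nonzero $f_i \in \Lambda$. By Proposition \ref{prop:funct_2}(1), $E^1(\alpha) \colon E^1(E) \to E^1(M)$ is a pseudo-isomorphism. By additivity of $E^1$ and Lemma \ref{lem:E1_basic}(2), $E^1(E) \simeq \bigoplus_i \Lambda/(f_i) = E$.

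Putting these together, $E^1(M)$ is pseudo-isomorphic to $E$, and $E$ is pseudo-isomorphic to $M$. One has to be slightly careful with directions, since the pseudo-isomorphism $E^1(E) \to E^1(M)$ goes from $E$ into $E^1(M)$. This is harmless because pseudo-isomorphism is an equivalence relation on finitely generated torsion modules, as recalled above. Hence $E^1(M) \sim M$. In the general case this gives $E^1(M) \sim E^1(M_{\tors}) \sim M_{\tors}$.

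We also need the action conventions to match. The isomorphism in Lemma \ref{lem:E1_basic}(2) is compatible with the $\Lambda$-action defined via $(\sigma f)(x) = f(\sigma x)$, because $\Lambda$ is commutative and $\Hom_\Lambda(\Lambda,\Lambda) \simeq \Lambda$ is $\Lambda$-linear. So no twist by the involution appears.

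The only step that is not formal is the reduction to elementary modules via the structure theorem. It is allowed here, since the section explicitly permits the use of the structure theorem. Everything else follows directly from Proposition \ref{prop:funct_2} and Lemma \ref{lem:E1_basic}.
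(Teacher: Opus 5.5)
Your proposal is correct and matches the paper's proof essentially step for step. Part (1) follows from Proposition \ref{prop:funct_2}(2) with $E^1(\Lambda)=0$, and part (2) from the pseudo-isomorphism $E^1(M)\to E^1(M_{\tors})$ of Proposition \ref{prop:funct_2}(3) together with the structure theorem, Proposition \ref{prop:funct_2}(1) and Lemma \ref{lem:E1_basic}(2), combined via the chain $M_{\tors}\to N\simeq E^1(N)\to E^1(M_{\tors})$ and the fact that pseudo-isomorphism is an equivalence relation on finitely generated torsion modules, just as you do.
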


\begin{proof}
(1)
This follows from Proposition \ref{prop:funct_2}(2) and $E^1(\Lambda) = 0$.

(2)
By Proposition \ref{prop:funct_2}(3) and $E^1(\Lambda) = 0$, the homomorphism $E^1(M) \to E^1(M_{\tors})$ is a pseudo-isomorphism.
By the structure theorem for finitely generated torsion $\Lambda$-modules, there exists a pseudo-isomorphism
\[
\alpha: M_{\tors} \to N = \bigoplus_{i=1}^s \Lambda/f_i
\]
with $f_1, \dots, f_s \in \Lambda \setminus \{0\}$.
By Proposition \ref{prop:funct_2}(1), the homomorphism $E^1(\alpha): E^1(N) \to E^1(M_{\tors})$ is a pseudo-isomorphism.
By Lemma \ref{lem:E1_basic}(2), there exists a (non-canonical) isomorphism $E^1(N) \simeq N$.
Therefore,
\[
M_{\tors} \overset{\alpha}{\to} N \simeq E^1(N) \overset{E^1(\alpha)}{\to} E^1(M_{\tors})
\]
is a pseudo-isomorphism.
This completes the proof.
\end{proof}

Note that this proposition can also be proved by using the long exact sequences of $\Ext^{\bullet}_{\Lambda}(-, \Lambda)$ as well as the structure theorem for finitely generated $\Lambda$-modules.
However, the functors $\fG$ and $\fF$ do not behave well with respect to exact sequences (see Remark \ref{rem:FG_not_ex}).
The argument given above also applies to $\fG$ and $\fF$.

\subsection{The functors $\fG$ and $\fF$}\label{ss:FG}

As in Subsection \ref{ss:intro_duality}, we define two functors $\fG$ and $\fF$.

\begin{defn}
Let $M$ be a finitely generated $\Lambda$-module.
\begin{itemize}
\item[(1)]
We define
\[
\fG(M) = \varprojlim_U (M_U)_{\fin},
\]
where $U$ runs over the open subgroups of $\Gamma$ and the transition map is induced by the projection map $M_V \to M_U$ for $U \supset V$.
\item[(2)]
We define
\[
\fF(M) = \varprojlim_U ((M_U)_{\fin})^{\vee},
\quad
\]
where the transition map is induced by the map $M_U \to M_V$ induced by the norm element of $U/V$ for $U \supset V$.
The action of $\Gamma$ on $\fF(M)$ is defined as in Subsection \ref{ss:intro_duality}.
\end{itemize}
\end{defn}

Then $\fG$ is a covariant functor, while $\fF$ is a contravariant functor, and both are $\Lambda$-linear.
The following lemma ensures that $\fG(M)$ and $\fF(M)$ are finitely generated.

\begin{lem}\label{lem:G_fg}
Let $M$ be a finitely generated $\Z_p[[\Gamma]]$-module.
\begin{itemize}
\item[(1)]
We have a natural injective homomorphism $\fG(M) \hookrightarrow M$.
Therefore, $\fG(M)$ is finitely generated.
\item[(2)]
We have a natural injective homomorphism $\fF(M) \hookrightarrow E^1(M)$.
Therefore, $\fF(M)$ is finitely generated.
\end{itemize}
\end{lem}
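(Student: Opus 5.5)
For (1), I will use the fact that a finitely generated $\Lambda$-module $M$ is compact and satisfies $M \simeq \varprojlim_U M_U$. This holds because the augmentation ideals of the open subgroups $U$ form a basis of neighbourhoods of $0$ in $\Lambda$, and $M$ carries the induced topology. For each $U$ we have the inclusion $(M_U)_{\fin} \hookrightarrow M_U$. These inclusions are compatible with the projections $M_V \to M_U$, since the image of a finite submodule is finite. Since $\varprojlim$ is left exact, we get an injection $\fG(M) = \varprojlim_U (M_U)_{\fin} \hookrightarrow \varprojlim_U M_U \simeq M$. Then $\fG(M)$ is finitely generated because $\Lambda$ is noetherian. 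This part is routine.

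For (2), write $\Lambda_U = \Z_p[\Gamma/U]$. The plan has three identifications at each finite level $U$.
\begin{itemize}
\item[(i)] Since $M_U$ is a finitely generated $\Z_p$-module, it splits as $(M_U)_{\fin} \oplus \Z_p^{r}$. Hence
\[
((M_U)_{\fin})^{\vee} \simeq \Ext^1_{\Z_p}((M_U)_{\fin}, \Z_p) \simeq \Ext^1_{\Z_p}(M_U, \Z_p).
\]
The first isomorphism comes from $0 \to \Z_p \to \Q_p \to \Q_p/\Z_p \to 0$.
\item[(ii)] Since $\Lambda_U$ is the group ring of a finite group, $\Hom_{\Z_p}(-, \Z_p) \simeq \Hom_{\Lambda_U}(-, \Lambda_U)$ via $f \mapsto \sum_{g} f(g^{-1}x) g$. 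Projectives go to injectives under this identification, so by deriving it we obtain
\[
\Ext^1_{\Z_p}(M_U, \Z_p) \simeq \Ext^1_{\Lambda_U}(M_U, \Lambda_U).
\]
\item[(iii)] The change-of-rings spectral sequence
\[
\Ext^a_{\Lambda_U}(\mathrm{Tor}^{\Lambda}_b(M, \Lambda_U), \Lambda_U) \Rightarrow \Ext^{a+b}_{\Lambda}(M, \Lambda_U)
\]
has a five-term exact sequence. Its first map is the injection
\[
0 \to \Ext^1_{\Lambda_U}(M_U, \Lambda_U) \to \Ext^1_{\Lambda}(M, \Lambda_U).
\]
\end{itemize}

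Passing to the limit, the key input is that $\Lambda = \varprojlim_U \Lambda_U$. Computing $\Ext$ from a resolution of $M$ by finitely generated free modules, all terms are compact $\Z_p$-modules. Inverse limits are exact on compact modules, so
\[
E^1(M) \simeq \varprojlim_U \Ext^1_{\Lambda}(M, \Lambda_U),
\]
where the transition maps are induced by the projections $\Lambda_V \to \Lambda_U$. Left exactness of $\varprojlim$ then gives
\[
\fF(M) \simeq \varprojlim_U \Ext^1_{\Lambda_U}(M_U, \Lambda_U) \hookrightarrow E^1(M),
\]
and $\fF(M)$ is finitely generated because $E^1(M)$ is, $\Lambda$ being noetherian.

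The main obstacle is checking compatibility of the transition maps. On the $\fF$ side they are the duals of the norm maps $M_U \to M_V$. On the $E^1$ side they are induced by the projection $\Lambda_V \to \Lambda_U$ together with $M_V \to M_U$. I would first verify this at the level of $\Hom$, using the explicit formula in (ii). For $f \in \Hom_{\Z_p}(M_V, \Z_p)$, take $\sum_{g \in \Gamma/V} f(g^{-1}x) g$ and push it to $\Lambda_U$. The result is $\sum_{h \in \Gamma/U} \bigl(\sum_{g \mapsto h} f(g^{-1}x)\bigr) h$. The inner sum equals $f(N_{U/V} h^{-1} x)$, which is exactly the formula for $f \circ N_{U/V}$. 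I would then deduce compatibility on $\Ext^1$ by naturality of the derived identifications, i.e.\ by dimension shifting. I would also check that the $\Gamma$-actions match, given the convention $(\sigma f)(x) = f(\sigma x)$ used for both Pontryagin duals and $\Hom_\Lambda(-, \Lambda)$.
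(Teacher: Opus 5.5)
Your proposal is correct and follows essentially the paper's route. Part (1) is the same argument. For (2), the paper uses exactly your identifications $((M_U)_{\fin})^{\vee} \simeq \Ext^1_{\Z_p}(M_U, \Z_p) \simeq \Ext^1_{\Z_p[\Gamma/U]}(M_U, \Z_p[\Gamma/U])$ and then cites Jannsen's exact sequence $0 \to \varprojlim_U ((M_U)_{\fin})^{\vee} \to E^1(M) \to \varprojlim_U (H_1(U, M) \otimes \Q_p/\Z_p)^{\vee} \to 0$; your change-of-rings spectral sequence and limit argument reprove precisely its injective half.

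Two justifications should be tightened, though the conclusions stand. First, the kernels of $\Lambda \to \Z_p[\Gamma/U]$ have infinite index, so they are not open and do not form a neighbourhood basis of $0$; the isomorphism $M \simeq \varprojlim_U M_U$ should instead be deduced from compactness of $M$. Second, the isomorphism in (ii) holds because projective $\Z_p[\Gamma/U]$-modules are $\Z_p$-projective, so a single projective resolution of $M_U$ computes both $\Ext$ groups, not because ``projectives go to injectives''.
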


\begin{proof}
(1) This follows immediately from 
\[
\fG(M) = \varprojlim_U (M_U)_{\fin} 
\subset \varprojlim_U M_U
\simeq M.
\]

(2)
This follows directly from the exact sequence 
\[
0 \to \varprojlim_U ((M_U)_{\fin})^{\vee} \to E^1(M) \to \varprojlim_U (H_1(U, M) \otimes (\Q_p/\Z_p))^{\vee} \to 0
\]
proved by Jannsen \cite[Theorem 2.1]{Jan89} (reproduced in Neukirch--Schmidt--Wingberg \cite[Theorem (5.4.13)]{NSW08}).
Indeed, we have natural isomorphisms
\[
((M_U)_{\fin})^{\vee} 
\simeq \Ext^1_{\Z_p}(M_U, \Z_p)
\simeq \Ext^1_{\Z_p[\Gamma/U]}(M_U, \Z_p[\Gamma/U])
\]
and the sequence above is a consequence of the universal coefficient theorem.
\end{proof}

The exact sequence in the proof of (2) above is also used by Lim \cite[Theorem 2.3]{Lim24} in a similar way.
In that work, $M$ is assumed to be ``systematically coinvariant-finite,'' which implies that $\fF(M) \simeq E^1(M)$.
The condition is satisfied by the higher $K$-groups, but in this paper we have to deal with modules that are not systematically coinvariant-finite.

The following lemma plays the role of Lemma \ref{lem:E1_basic}.

\begin{lem}\label{lem:FG_bas}
\begin{itemize}
\item[(1)]
We have $\fG(\Lambda) = 0$ and $\fF(\Lambda) = 0$.
\item[(2)]
For any $f \in \Lambda \setminus \{0\}$, we have an isomorphism $\fG(\Lambda/f) \simeq \fF(\Lambda/f)$.
\end{itemize}
\end{lem}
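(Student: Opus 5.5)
\emph{Part (1).} For $M = \Lambda$ and an open subgroup $U$, we have $\Lambda_U = \Z_p[\Gamma/U]$, which is free of finite rank over $\Z_p$. Hence $(\Lambda_U)_{\fin} = 0$ for every $U$, and both inverse limits vanish, so $\fG(\Lambda) = 0$ and $\fF(\Lambda) = 0$.

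\emph{Part (2): levelwise isomorphism.} The plan is to give, for each $U$, an explicit isomorphism $((M_U)_{\fin})^{\vee} \simeq (M_U)_{\fin}$ for $M = \Lambda/f$. We then check that these isomorphisms carry the norm-induced transition maps of $\fF$ to the projection-induced transition maps of $\fG$.

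Write $R_U = \Z_p[\Gamma/U]$ and $Q_U = \Q_p \otimes R_U$, a finite product of fields. Then $M_U = R_U/fR_U$, where we also write $f$ for the image of $f$ in $R_U$. Since $f$ may be a zero divisor in $R_U$, I will not use a free resolution of $M_U$. Instead I use the short exact sequence
\[
0 \to fR_U \to R_U \to M_U \to 0 .
\]
Since $R_U$ is $\Z_p$-free, this gives
\[
((M_U)_{\fin})^{\vee} \simeq \Ext^1_{\Z_p}(M_U, \Z_p) \simeq \Coker\bigl(\Hom_{\Z_p}(R_U, \Z_p) \to \Hom_{\Z_p}(fR_U, \Z_p)\bigr).
\]
Identify $\Hom_{\Z_p}(R_U, \Z_p)$ with $R_U$ by $a \mapsto \varphi_a$, where $\varphi_a(x)$ is the coefficient of $1$ in $ax$. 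With the paper's convention $(\sigma\varphi)(x) = \varphi(\sigma x)$, one gets $\sigma \varphi_a = \varphi_{\sigma a}$, so this is $R_U$-linear with no involution appearing.

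Next, $fR_U \simeq R_U/R_U[f]$ via $x \mapsto fx$. So $\Hom_{\Z_p}(fR_U, \Z_p)$ is identified with the orthogonal of the ideal $R_U[f]$ under the symmetric invariant nondegenerate pairing $\langle a, b \rangle = \varphi_a(b)$. This orthogonal is $\{a : aR_U[f] = 0\} = \mathrm{ann}(R_U[f])$. Because $Q_U$ is semisimple, this annihilator equals $R_U \cap fQ_U$.

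Under these identifications the restriction map becomes $a \mapsto fa$. Hence
\[
((M_U)_{\fin})^{\vee} \simeq (R_U \cap fQ_U)/fR_U .
\]
The right-hand side is exactly the $\Z_p$-torsion of $R_U/fR_U$, that is, $(M_U)_{\fin}$. The identification depends only on the chosen element $f$, not on $U$.

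\emph{Part (2): compatibility.} Let $V \subset U$, with projection $\pi: R_V \to R_U$ and norm map $N: R_U \to R_V$ (multiplication by the norm element of $U/V$ after lifting). The key identity is
\[
\langle a, N(b) \rangle_V = \langle \pi(a), b \rangle_U \quad (a \in R_V,\ b \in R_U).
\]
It holds because the coefficient of $1$ in $a \cdot \sum_{\tau \in U/V} \tau \tilde b$ equals the coefficient of $1$ in $\pi(a)b$.

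From this identity, the dual of the norm map $M_U \to M_V$ corresponds, under the above isomorphisms, to the map induced by $\pi$ on $(R_V \cap fQ_V)/fR_V \to (R_U \cap fQ_U)/fR_U$. That is precisely the transition map of $\fG$. Passing to the inverse limit over $U$ gives $\fF(\Lambda/f) \simeq \fG(\Lambda/f)$.

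The main obstacle is this compatibility check, which needs care with the identifications through $fR \simeq R/R[f]$. The rest is bookkeeping.
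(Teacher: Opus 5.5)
Your proof is correct and is essentially the paper's argument. The isomorphism $((M_U)_{\fin})^{\vee}\simeq (R_U\cap fQ_U)/fR_U$ that you extract from $\Ext^1_{\Z_p}(M_U,\Z_p)$ is, up to sign, the paper's pairing $(x,y)=\xi y$ (with $x=f\xi$) followed by taking the coefficient of the identity, and your identity $\langle a,N(b)\rangle_V=\langle \pi(a),b\rangle_U$ is the paper's compatibility $\nu(\pi(x),y)_U=(x,\nu(y))_V$. The one difference is in how perfectness is obtained. You get it automatically from the presentation $0\to fR_U\to R_U\to M_U\to 0$ together with $\mathrm{ann}(R_U[f])=R_U\cap fQ_U$, whereas the paper writes the pairing down explicitly and proves non-degeneracy by extending $\times\xi\colon J\to\Z_p[G]$ to $\Z_p[G]$ using $\Ext^1_{\Z_p}(\Z_p[G]/J,\Z_p)=0$.
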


\begin{proof}
(1)
These follow from $(\Lambda_U)_{\fin} = \Z_p[\Gamma/U]_{\fin} = 0$ for any $U$.

(2)
For a while, we fix an open subgroup $U$ of $\Gamma$ and set $G = \Gamma/U$.
Then $(\Lambda/f)_U \simeq \Z_p[G]/f$, where we simply write $f$ for the image of $f$.
We will construct an isomorphism
\[
((\Z_p[G]/f)_{\fin})^{\vee} \simeq (\Z_p[G]/f)_{\fin},
\]
that is, a perfect pairing
\[
(\Z_p[G]/f)_{\fin} \times (\Z_p[G]/f)_{\fin} \to \Q_p/\Z_p,
\]
which is moreover compatible when $U$ varies.

Let $J = J_U(f) \subset \Z_p[G]$ be the submodule such that $J/(f) = (\Z_p[G]/f)_{\fin}$.
In other words, $J$ is the $\Z_p$-saturation of the ideal $(f) \subset \Z_p[G]$.
We define $(-, -): J \times J \to \Q_p[G]$ roughly by $(x, y) = xy/f$, although this requires justification since $f$ is not a non-zero-divisor in general.
To be precise, we define
\[
(x, y) = \xi y
\]
by taking $\xi \in \Q_p[G]$ such that $x = f \xi$.
One checks that this is well-defined, i.e., independent of the choice of $\xi$.
We also have an alternative definition $(x, y) = x \eta$ by taking $\eta \in \Q_p[G]$ such that $y = f \eta$.
This is a $\Z_p[G]$-bilinear pairing, so we have $(gx, y) = (x, gy) = g(x, y)$ for $g \in G$.

This pairing induces a $\Z_p[G]$-bilinear pairing
\[
(-, -): J/(f) \times J/(f) \to \Q_p[G]/\Z_p[G].
\]
Indeed, if $x \in (f)$, then we can take $\xi \in \Z_p[G]$, so $(x, y) = \xi y \in \Z_p[G]$.

We claim that this is a non-degenerate pairing.
Let $x \in J$ be an element such that $(x, y) \in \Z_p[G]$ holds for any $y \in J$.
Take an element $\xi \in \Q_p[G]$ such that $x = f \xi$.
Then the assumption says $\xi y \in \Z_p[G]$ for any $y \in J$, so we can consider the diagram
\[
\xymatrix{
	J \ar[r]^-{\times \xi} \ar@{^(->}[d] & \Z_p[G] \\
	\Z_p[G] \ar@{..>}[ru]_{\times \xi'}
}
\]
We have
\[
\Ext^1_{\Z_p[G]}(\Z_p[G]/J, \Z_p[G])
\simeq \Ext^1_{\Z_p}(\Z_p[G]/J, \Z_p)
= 0
\]
since $J$ is saturated in $\Z_p[G]$.
Therefore, there exists an element $\xi' \in \Z_p[G]$ making the diagram commute.
In particular, for the element $f \in J$, we obtain $f \xi = f \xi'$.
This shows that $x \in (f)$, as claimed.

By composing with the map $\Q_p[G]/\Z_p[G] \to \Q_p/\Z_p$ that takes the coefficients of the identity element, we obtain a $\Z_p$-bilinear pairing
\[
\lrang{-,-}: (\Z_p[G]/f)_{\fin} \times (\Z_p[G]/f)_{\fin} \to \Q_p/\Z_p
\]
such that
\[
(x, y) 
= \sum_{g \in G} \lrang{g^{-1} x, y} g
= \sum_{g \in G} \lrang{x, g^{-1}y} g.
\]
One checks that $\lrang{-, -}$ is also a perfect pairing.

We now vary the open subgroup $U$, so we write $(-, -)_U$ and $\lrang{-, -}_U$ instead of $(-, -)$ and $\lrang{-, -}$.
For each $U \supset V$, the following commutes:
\[
\xymatrix{
\lrang{-,-}_V:  \hspace{-1em}
& (\Z_p[\Gamma/V]/f)_{\fin} \hspace{-1em} \ar[d]_{\pi}
& \hspace{-1em} \times \hspace{-1em}
& (\Z_p[\Gamma/V]/f)_{\fin}
  \ar[r]^-{(-,-)_V}
& \Q_p[\Gamma/V]/\Z_p[\Gamma/V] \ar[r]
& \Q_p/\Z_p \ar@{=}[d]
\\
\lrang{-,-}_U:  \hspace{-1em}
& (\Z_p[\Gamma/U]/f)_{\fin} \hspace{-1em}
& \hspace{-1em} \times \hspace{-1em}
& (\Z_p[\Gamma/U]/f)_{\fin}
  \ar[r]_-{(-,-)_U} \ar[u]_{\nu}
& \Q_p[\Gamma/U]/\Z_p[\Gamma/U]
  \ar[r] \ar[u]_{\nu}
& \Q_p/\Z_p,
}
\]
where $\pi$ denotes the projection and $\nu$ denotes the norm map.
Indeed, for $\ol{x} \in (\Z_p[\Gamma/V]/f)_{\fin}$ and $\ol{y} \in (\Z_p[\Gamma/U]/f)_{\fin}$, taking $\xi \in \Q_p[\Gamma/V]$ such that $x = f \xi$, we have $\pi(x) = f \pi(\xi)$, so
\[
\nu (\pi(x), y)_U = \nu (\pi(\xi)y) = \xi \nu(y) = (x, \nu(y))_V,
\]
as claimed.

Therefore, we obtain
\[
\fF(\Lambda/f)
= \varprojlim_U (((\Lambda/f)_U)_{\fin})^{\vee}
\simeq \varprojlim_U ((\Z_p[\Gamma/U]/f)_{\fin})^{\vee}
\simeq \varprojlim_U (\Z_p[\Gamma/U]/f)_{\fin}
= \fG(\Lambda/f).
\]
This completes the proof.
\end{proof}

We are ready to prove the main algebraic theorem.

\begin{thm}\label{thm:FG_pisom}
Let $M$ be a finitely generated $\Lambda$-module.
\begin{itemize}
\item[(1)]
Both $\fG(M)$ and $\fF(M)$ are finitely generated torsion.
\item[(2)]
$\fG(M)$ and $\fF(M)$ are pseudo-isomorphic to each other.
\end{itemize}
\end{thm}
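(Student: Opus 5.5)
The plan follows the proof of Proposition \ref{prop:E1_pisom}, with Lemma \ref{lem:FG_bas} playing the role of Lemma \ref{lem:E1_basic}.

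First, by Lemma \ref{lem:G_fg}, both $\fG(M)$ and $\fF(M)$ are finitely generated. The functor $\fG$ is covariant and $\Lambda$-linear, and $\fF$ is contravariant and $\Lambda$-linear. Lemma \ref{lem:FG_bas}(1) gives $\fG(\Lambda) = \fF(\Lambda) = 0$. Proposition \ref{prop:funct_2}(2) and its contravariant variant then show that $\fG(M)$ and $\fF(M)$ are torsion, which is claim (1). Proposition \ref{prop:funct_2}(3) and its contravariant variant show that the natural maps
\[
\fG(M_{\tors}) \to \fG(M), \qquad \fF(M) \to \fF(M_{\tors})
\]
are pseudo-isomorphisms. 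So for (2) we may assume $M$ is torsion.

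For torsion $M$, the structure theorem gives a pseudo-isomorphism $\alpha: M \to N = \bigoplus_{i=1}^s \Lambda/f_i$ with $f_i \neq 0$. By Proposition \ref{prop:funct_2}(1), applied covariantly and contravariantly, both
\[
\fG(\alpha): \fG(M) \to \fG(N), \qquad \fF(\alpha): \fF(N) \to \fF(M)
\]
are pseudo-isomorphisms. Both functors commute with finite direct sums. Indeed, coinvariants, $(-)_{\fin}$, Pontryagin duals and inverse limits all commute with finite direct sums, and so does $\Lambda$-linearity. Hence Lemma \ref{lem:FG_bas}(2), applied summandwise, gives an isomorphism $\fG(N) \simeq \fF(N)$. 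Composing, we get the chain
\[
\fG(M) \overset{\fG(\alpha)}{\to} \fG(N) \simeq \fF(N) \overset{\fF(\alpha)}{\to} \fF(M).
\]
Each map is a pseudo-isomorphism, so the composite is a pseudo-isomorphism between finitely generated torsion modules. This proves (2).

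The main obstacle is the contravariant version of Proposition \ref{prop:funct_2}, in particular part (3). Concretely, one localizes at a prime $\fp$ of height at most one and uses a splitting $\Lambda^r \oplus M_{\tors} \to M$ that becomes an isomorphism at $\fp$. Lemma \ref{lem:funct_3}, in its contravariant form, then shows that $\fF(M)_\fp \to \fF(\Lambda)_\fp^r \oplus \fF(M_{\tors})_\fp$ is an isomorphism, and $\fF(\Lambda)_\fp = 0$. The argument of Lemma \ref{lem:funct_3} goes through verbatim for contravariant functors, since $\fF(\alpha \circ \beta) = \fF(\beta) \circ \fF(\alpha)$ still equals $s\,\id$. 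It remains to check that $\fG$ and $\fF$ are really $\Lambda$-linear, i.e. that the transition maps are $\Lambda$-equivariant. This holds because $\Gamma$ is abelian and the norm maps commute with the $\Gamma$-action.
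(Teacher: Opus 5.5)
Your proposal is correct and follows the paper's proof essentially step for step. It gets finite generation from Lemma \ref{lem:G_fg}, torsionness and the reduction to $M_{\tors}$ from Proposition \ref{prop:funct_2}(2)(3) with $\fG(\Lambda)=\fF(\Lambda)=0$, and then combines a pseudo-isomorphism $M_{\tors}\to N=\bigoplus_i \Lambda/f_i$, Proposition \ref{prop:funct_2}(1), and Lemma \ref{lem:FG_bas}(2) to get $\fG(N)\simeq\fF(N)$. The details you add — additivity of both functors, so that Lemma \ref{lem:FG_bas}(2) applies summandwise, and the contravariant form of Lemma \ref{lem:funct_3} — are exactly what the paper leaves implicit, and you handle them soundly.
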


As already remarked, this is shown by Lee when $d = 1$.

\begin{proof}
This is shown in the same manner as Proposition \ref{prop:E1_pisom}.

(1)
They are finitely generated by Lemma \ref{lem:G_fg}.
Then we apply Proposition \ref{prop:funct_2}(2) and Lemma \ref{lem:FG_bas}(1) to see that they are torsion.

(2)
By Proposition \ref{prop:funct_2}(3) and Lemma \ref{lem:FG_bas}(1), the homomorphisms $\fG(M_{\tors}) \to \fG(M)$ and $\fF(M) \to \fF(M_{\tors})$ are both pseudo-isomorphisms.
By the structure theorem, there exists a pseudo-isomorphism
\[
\alpha: M_{\tors} \to N = \bigoplus_{i=1}^s \Lambda/f_i
\]
with $f_1, \dots, f_s \in \Lambda \setminus \{0\}$.
By Proposition \ref{prop:funct_2}(1), the homomorphisms $\fG(\alpha): \fG(M_{\tors}) \to \fG(N)$ and $\fF(\alpha): \fF(N) \to \fF(M_{\tors})$ are pseudo-isomorphisms.
By Lemma \ref{lem:FG_bas}(2), there exists a (non-canonical) isomorphism $\fG(N) \simeq \fF(N)$.
Therefore,
\[
\fG(M_{\tors}) \overset{\fG(\alpha)}{\to} \fG(N) \simeq \fF(N) \overset{\fF(\alpha)}{\to} \fF(M_{\tors})
\]
is a pseudo-isomorphism.
This completes the proof.
\end{proof}

It appears to be useful to record a picture (not a commutative diagram):
\[
\xymatrix{
	\fG(M) \ar@{^(->}[r] \ar@{}[d]|{\rotatebox{90}{$\sim$}} & M_{\tors} \ar@{}[d]|{\rotatebox{90}{$\sim$}} \\
	\fF(M) \ar@{^(->}[r] & E^1(M).
}
\]
Note, however, that the vertical pseudo-isomorphisms are not canonical.

We conclude this general theory by observing some examples and remarks.

\begin{eg}\label{eg:F_comp}
Let $f \in \Lambda \setminus \{0\}$.
As in the proof of Lemma \ref{lem:FG_bas}(2), for each open subgroup $U$ of $\Gamma$, let $J_U(f) \supset (f)$ be the ideal of $\Z_p[\Gamma/U]$ such that
\[
(\Z_p[\Gamma/U]/f)_{\fin} = J_U(f)/(f).
\]
We define an ideal of $\Lambda$ by $J(f) = \varprojlim_U J_U(f) \supset (f)$.
Then we obtain $\fG(\Lambda/f)= J(f)/(f)$.
We now consider two extreme cases:
\begin{itemize}
\item
Suppose the image of $f$ in $\Z_p[\Gamma/U]$ is a non-zero-divisor for any $U$.
Then we have $(\Z_p[\Gamma/U]/f)_{\fin} = \Z_p[\Gamma/U]/f$, so we have $J(f) = \Lambda$ and $\fG(\Lambda/f) = \Lambda/f$.
\item
Suppose $f$ is of the form $\gamma - 1$ for an element $\gamma \in \Gamma \setminus \{1\}$.
Then we have $(\Z_p[\Gamma/U]/f)_{\fin} = 0$ for any $U$.
In this case, we have $J(f) = (f)$ and $\fG(\Lambda/f) = 0$.
\end{itemize}
Another description of $J_U(f)$ is $J_U(f) = \Z_p[\Gamma/U] \cap f \Q_p[\Gamma/U]$.
Since $\Q_p[\Gamma/U]$ is a product of fields, we have $f \Q_p[\Gamma/U] = f^e \Q_p[\Gamma/U]$ and so
\[
J(f^e) = J(f)
\]
holds for any $e \geq 1$.
It follows that $\fG(\Lambda/f^e) = J(f)/(f^e)$.
For example, if $f$ is of the form $\gamma - 1$ for an element $\gamma \in \Gamma \setminus \{1\}$, we have $\fG(\Lambda/f^e) = (f)/(f^e) \subset \Lambda/(f^e)$.
\end{eg}

\begin{rem}\label{rem:FG_not_ex}
Let us observe that $\fG$ is not an exact functor even if we ignore pseudo-null modules.
Let $f = \gamma - 1$ with $\gamma \in \Gamma \setminus \{1\}$.
Then
\[
0 \to \Lambda/f \overset{\times f}{\to} \Lambda/f^2 \to \Lambda/f \to 0
\]
is an exact sequence, but applying $\fG$ yields
\[
0 \to 0 \to (f)/(f^2) \to 0 \to 0
\]
by Example \ref{eg:F_comp}, which is not exact even up to pseudo-null modules.
This example, together with Lemma \ref{lem:FG_bas}(2), also shows that $\fF$ is not exact.
\end{rem}

\begin{rem}\label{rem:FG_pisom_2}
For any finitely generated $\Z_p[[\Gamma]]$-module $M$, by Lemma \ref{lem:G_fg}, we have a natural homomorphism
\[
\fF(M) \to E^1(M) \to E^1(\fG(M)).
\]
Although the modules $\fF(M)$ and $E^1(\fG(M))$ are known to be pseudo-isomorphic to each other, this homomorphism need not be a pseudo-isomorphism.
For example, let $f = \gamma - 1$ with $\gamma \in \Gamma \setminus \{1\}$ and consider $M = \Lambda/(f^2)$.
Then this composite map is identified with
\[
(f)/(f^2) \hookrightarrow \Lambda/(f^2) \overset{\times f}{\to} (f)/(f^2),
\]
which is the zero map and so is not a pseudo-isomorphism.
\end{rem}

\subsection{The proofs of the main results}\label{ss:pf_final}

Let $K_{\infty}/K$ be a $\Z_p^d$-extension and set $\Gamma =\Gal(K_{\infty}/K)$ and $\Lambda = \Z_p[[\Gamma]]$.
We use the finitely generated $\Lambda$-module $Y_{K_{\infty}}$ constructed in Theorem \ref{thm:main_ram_Iw}.
Here we do not need to assume the fact that $X_{K_{\infty}}$ is torsion and indeed reprove it.

\begin{proof}[Proof of Theorem \ref{thm:intro_duality}]
As already discussed in Subsection \ref{ss:intro_duality}, by using Lemma \ref{lem:funct_ram}, we have
\[
X_{K_{\infty}} \simeq \fG(Y_{K_{\infty}}),
\quad
A_{K_{\infty}}^{\vee} \simeq \fF(Y_{K_{\infty}}).
\]
Since $Y_{K_{\infty}}$ is known to be finitely generated over $\Lambda$, we are able to apply Theorem \ref{thm:FG_pisom}.
The theorem follows.
\end{proof}

To deduce Corollary \ref{cor:GGC_equiv}, we use the following algebraic fact.
We say that a $\Lambda$-module $M$ is PN-free if $M$ has no nonzero pseudo-null submodules.

\begin{prop}\label{prop:PN_free}
Let $M$ be a finitely generated torsion $\Lambda$-module.
Then both $E^1(M)$ and $\fF(M)$ are PN-free.
\end{prop}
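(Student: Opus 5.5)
Both assertions are local at height-two and higher primes, so I will work prime by prime. Since $\fF(M)\hookrightarrow E^1(M)$ by Lemma \ref{lem:G_fg}(2), and a submodule of a PN-free module is PN-free, it suffices to show that $E^1(M)$ is PN-free. For this I use the standard characterization: a finitely generated $\Lambda$-module $N$ is PN-free if and only if $\mathrm{depth}\, N_{\fp} \geq 1$ for every prime $\fp$ with $\height(\fp) \geq 2$. Equivalently, no such $\fp$ is an associated prime of $N$, since a nonzero pseudo-null submodule contains some $\Lambda/\fp$ with $\height(\fp)\geq 2$.

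The key step is to take a finite free resolution and dualize. Since $M$ is torsion, $\Hom_\Lambda(M,\Lambda)=0$. Choose a presentation $0\to Z\to P_0\to M\to 0$ with $P_0$ finitely generated free. Applying $\Hom_\Lambda(-,\Lambda)$ gives
\[
0\to P_0^*\to Z^*\to E^1(M)\to 0,
\]
so $E^1(M)$ is the cokernel of $P_0^*\to Z^*$. Now $Z^*$ is a dual module, hence reflexive, so it satisfies Serre's condition $(S_2)$: $\mathrm{depth}\,(Z^*)_\fp\geq \min(2,\height \fp)$. This holds because duals over a regular (indeed normal) ring are second syzygies. Localize at $\fp$ with $\height(\fp)\geq 2$. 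The depth lemma applied to $0\to (P_0^*)_\fp\to (Z^*)_\fp\to E^1(M)_\fp\to 0$ gives
\[
\mathrm{depth}\,E^1(M)_\fp \geq \min\bigl(\mathrm{depth}(Z^*)_\fp,\ \mathrm{depth}(P_0^*)_\fp -1\bigr)\geq \min(2,\height\fp-1)\geq 1.
\]
Hence $E^1(M)$ is PN-free.

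An alternative route would be to first reduce to $\Lambda_\fp$ with $\fp$ of height $\geq 2$, by Lemma \ref{lem:loc_hom} and the analogous compatibility of $\Ext$ with localization. One would then show $\Hom_{\Lambda_\fp}(k(\fp),E^1(M)_\fp)=0$ via the spectral sequence $\Ext^i(k,\Ext^j(M,\Lambda))\Rightarrow \Ext^{i+j}(k\otimes^L M,\Lambda)$ together with $\Ext^{<\height\fp}(k(\fp),\Lambda_\fp)=0$.

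The main obstacle is justifying the $(S_2)$ property of $Z^*$ cleanly. I would argue it directly. Take a presentation $F_1\to F_0\to Z\to 0$; dualizing gives $0\to Z^*\to F_0^*\to F_1^*$, so $Z^*$ is the kernel of a map between free modules. Two applications of the depth lemma then give $\mathrm{depth}\,(Z^*)_\fp\geq\min(2,\mathrm{depth}\,\Lambda_\fp)$, and $\mathrm{depth}\,\Lambda_\fp=\height\fp$ because $\Lambda$ is regular. Everything else is formal, and the claim for $\fF(M)$ follows from the injection of Lemma \ref{lem:G_fg}(2).
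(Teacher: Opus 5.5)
Your proof is correct and has the same structure as the paper's: the paper also deduces the claim for $\fF(M)$ from the injection $\fF(M)\hookrightarrow E^1(M)$ of Lemma \ref{lem:G_fg}(2), and for $E^1(M)$ it gives no proof, citing the well-known fact (Vauclair, Lemme 5.1(iii)). Your depth argument supplies a valid self-contained proof of that cited fact: $\Hom_\Lambda(M,\Lambda)=0$ gives $0\to P_0^*\to Z^*\to E^1(M)\to 0$, the dual $Z^*$ satisfies $(S_2)$, and the depth lemma is applied at primes of height at least two. It also shows exactly where the torsion hypothesis is used.
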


\begin{proof}
The claim for $E^1(M)$ is well-known and we omit the proof (see, for example, Vauclair \cite[Lemme 5.1(iii)]{Vau09}).
Then the claim for $\fF(M)$ also follows by the existence of the injective homomorphism $\fF(M) \hookrightarrow E^1(M)$ (Lemma \ref{lem:G_fg}(2)).
\end{proof}

Note that the assumption that $M$ is torsion cannot be dropped.
For instance, if $\Lambda = \Z_p[[T]]$ and $M = (p, T) \subset \Lambda$, then $E^1(M) \simeq \F_p$ is not PN-free.

\begin{proof}[Proof of Corollary \ref{cor:GGC_equiv}]
The equivalence (i) $\Leftrightarrow$ (ii) is clear from Theorem \ref{thm:intro_duality}.
Since we know that $Y_{K_{\infty}}$ is a finitely generated and torsion module, Proposition \ref{prop:PN_free} implies that $A_{K_{\infty}}^{\vee} \simeq \fF(Y_{K_{\infty}})$ is PN-free.
Therefore, (ii) $\Leftrightarrow$ (iii) holds.
\end{proof}

\section*{Use of AI}

The author used ChatGPT to improve the exposition.

\section*{Acknowledgments}

The author thanks Cornelius Greither, Takamichi Sano, Takuya Yanagisawa, and Taichi Komori for their helpful comments on this work.
This work is supported by JSPS KAKENHI Grant Number 26K16965.

{
\bibliographystyle{abbrv}
\bibliography{biblio}

\begin{thebibliography}{10}

\bibitem{CM81}
A.~A. Cuoco and P.~Monsky.
\newblock Class numbers in {${\bf Z}\sp{d}\sb{p}$}-extensions.
\newblock {\em Math. Ann.}, 255(2):235--258, 1981.

\bibitem{GS64}
E.~S. Golod and I.~R. \v{S}afarevi\v{c}.
\newblock On the class field tower.
\newblock {\em Izv. Akad. Nauk SSSR Ser. Mat.}, 28:261--272, 1964.

\bibitem{Gree01}
R.~Greenberg.
\newblock Iwasawa theory---past and present.
\newblock In {\em Class field theory---its centenary and prospect ({T}okyo,
  1998)}, volume~30 of {\em Adv. Stud. Pure Math.}, pages 335--385. Math. Soc.
  Japan, Tokyo, 2001.

\bibitem{Gru76}
K.~W. Gruenberg.
\newblock {\em Relation modules of finite groups}.
\newblock American Mathematical Society, Providence, R.I., 1976.
\newblock Conference Board of the Mathematical Sciences Regional Conference
  Series in Mathematics, No. 25.

\bibitem{GW00}
K.~W. Gruenberg and A.~Weiss.
\newblock Capitulation and transfer kernels.
\newblock {\em J. Th\'eor. Nombres Bordeaux}, 12(1):219--226, 2000.

\bibitem{Iwa59}
K.~Iwasawa.
\newblock On {$\Gamma $}-extensions of algebraic number fields.
\newblock {\em Bull. Amer. Math. Soc.}, 65:183--226, 1959.

\bibitem{Iwa73}
K.~Iwasawa.
\newblock On {${\bf Z}\sb{l}$}-extensions of algebraic number fields.
\newblock {\em Ann. of Math. (2)}, 98:246--326, 1973.

\bibitem{Jan89}
U.~Jannsen.
\newblock Iwasawa modules up to isomorphism.
\newblock In {\em Algebraic number theory}, volume~17 of {\em Adv. Stud. Pure
  Math.}, pages 171--207. Academic Press, Boston, MA, 1989.

\bibitem{LT21}
K.~F. Lai and K.-S. Tan.
\newblock A generalized {I}wasawa's theorem and its application.
\newblock {\em Res. Math. Sci.}, 8(2):Paper No. 20, 18, 2021.

\bibitem{Lee20}
J.~Lee.
\newblock Structure of the {M}ordell-{W}eil group over the
  {$\Bbb{Z}_p$}-extensions.
\newblock {\em Trans. Amer. Math. Soc.}, 373(4):2399--2425, 2020.

\bibitem{Lei17}
A.~Lei.
\newblock Estimating class numbers over metabelian extensions.
\newblock {\em Acta Arith.}, 180(4):347--364, 2017.

\bibitem{LL19}
D.~Liang and M.~F. Lim.
\newblock On the {I}wasawa asymptotic class number formula for
  {$\Bbb{Z}_p^r\rtimes \Bbb {Z}_p$}-extensions.
\newblock {\em Acta Arith.}, 189(2):191--208, 2019.

\bibitem{Lim24}
M.~F. Lim.
\newblock Comparing direct limit and inverse limit of even {$K$}-groups in
  multiple {$\Bbb Z_p$}-extensions.
\newblock {\em J. Th\'{e}or. Nombres Bordeaux}, 36(2):537--555, 2024.

\bibitem{Nek06}
J.~Nekov\'{a}\v{r}.
\newblock Selmer complexes.
\newblock {\em Ast\'{e}risque}, 310:viii+559, 2006.

\bibitem{NSW08}
J.~Neukirch, A.~Schmidt, and K.~Wingberg.
\newblock {\em Cohomology of number fields}, volume 323 of {\em Grundlehren der
  Mathematischen Wissenschaften [Fundamental Principles of Mathematical
  Sciences]}.
\newblock Springer-Verlag, Berlin, second edition, 2008.

\bibitem{RW96}
J.~Ritter and A.~Weiss.
\newblock A {T}ate sequence for global units.
\newblock {\em Compositio Math.}, 102(2):147--178, 1996.

\bibitem{RW02}
J.~Ritter and A.~Weiss.
\newblock Toward equivariant {I}wasawa theory.
\newblock {\em Manuscripta Math.}, 109(2):131--146, 2002.

\bibitem{San91}
J.~W. Sands.
\newblock On small {I}wasawa invariants and imaginary quadratic fields.
\newblock {\em Proc. Amer. Math. Soc.}, 112(3):671--684, 1991.

\bibitem{Suz91}
H.~Suzuki.
\newblock A generalization of {H}ilbert's theorem 94.
\newblock {\em Nagoya Math. J.}, 121:161--169, 1991.

\bibitem{Vau09}
D.~Vauclair.
\newblock Sur la dualit\'{e} et la descente d'{I}wasawa.
\newblock {\em Ann. Inst. Fourier (Grenoble)}, 59(2):691--767, 2009.

\bibitem{Was97}
L.~C. Washington.
\newblock {\em Introduction to cyclotomic fields}, volume~83 of {\em Graduate
  Texts in Mathematics}.
\newblock Springer-Verlag, New York, second edition, 1997.

\end{thebibliography}
}

\end{document}